\renewcommand\normalsize{%
    \@setfontsize\normalsize{11.7}{14pt plus .3pt minus .3pt}%
    \abovedisplayskip 10\p@ \@plus4\p@ \@minus4\p@
    \abovedisplayshortskip 6\p@ \@plus2\p@
    \belowdisplayshortskip 6\p@ \@plus2\p@
    \belowdisplayskip \abovedisplayskip}
\renewcommand\small{%
    \@setfontsize\small{9.5}{12\p@ plus .2\p@ minus .2\p@}%
    \abovedisplayskip 8.5\p@ \@plus4\p@ \@minus1\p@
    \belowdisplayskip \abovedisplayskip
    \abovedisplayshortskip \abovedisplayskip
    \belowdisplayshortskip \abovedisplayskip}
\renewcommand\footnotesize{%
    \@setfontsize\footnotesize{8.5}{9.25\p@ plus .1pt minus .1pt}%%
    \abovedisplayskip 6\p@ \@plus4\p@ \@minus1\p@
    \belowdisplayskip \abovedisplayskip
    \abovedisplayshortskip \abovedisplayskip
    \belowdisplayshortskip \abovedisplayskip}
\definecolor{violet}{rgb}{0.0,0.2,0.7}
\definecolor{rouge2}{rgb}{0.8,0.0,0.2}
 \theoremstyle{plain}    
 \newtheorem{thm}{Theorem}[section]
\theoremstyle{plain} 
\theoremstyle{thm*} \newtheorem*{mthm}{Main Theorem}
 \numberwithin{equation}{section} %% Comment out for sequentially-numbered
 \numberwithin{figure}{section} %% Comment out for sequentially-numbered
 \theoremstyle{plain}    
 \newtheorem{prop}[thm]{Proposition} %%Delete [thm] to re-start numbering
 \theoremstyle{plain}    
  \theoremstyle{thm*}    
  \newtheorem*{question*}{Question}
 \newtheorem{lem}[thm]{Lemma} %%Delete [thm] to re-start numbering
 \theoremstyle{remark}
  \newtheorem{claim}[thm]{Claim} %%Delete [thm] to re-start numbering
 \theoremstyle{remark}
 \newtheorem{rem}[thm]{Remark}
 \theoremstyle{definition}
\newtheorem{exa}[thm]{Example}
\theoremstyle{plain}  
\theoremstyle{plain}
\theoremstyle{definition}
\newtheorem{defi}[thm]{Definition}
\newcommand{\Q}{{\mathbb{Q}}}
\def\1{\bold{1}}
\newcommand{\gk}{\bar g_k}
\newcommand{\om}{\omega}
\newcommand{\omb}{\omega_\beta}
\newcommand{\keb}{\mathrm{KE}_\beta}
\newcommand{\ub}{u_\beta}
\newcommand{\wub}{\widehat{u}_\beta}
\newcommand{\Upe}{\Upsilon_{\ep}}
\newcommand{\vp}{\varphi}
\newcommand{\ep}{\varepsilon}
\DeclareMathOperator{\Ric}{Ric}
\renewcommand{\d}{\partial}
\newcommand{\dbar}{\overline{\partial}}
\renewcommand{\ge}{\geqslant}
\renewcommand{\le}{\leqslant}
\newcommand{\vol}{\operatorname{vol}}
\title{Degenerating conic Kähler-Einstein metrics to the normal cone}
\author{Olivier Biquard}
\address{Sorbonne Université, Université Paris Cité, CNRS, IMJ-PRG, F-75005 Paris, France}
\email{olivier.biquard@sorbonne-universite.fr}
\author{Henri Guenancia}
\address{Institut de Mathématiques de Toulouse; UMR 5219, Université de Toulouse; CNRS, UPS, 118 route de Narbonne, F-31062 Toulouse Cedex 9, France}
\email{henri.guenancia@math.cnrs.fr}
\date{}
\begin{document}

\begin{abstract}  
  Let $X$ be a Fano manifold of dimension at least $2$ and $D$ be a smooth divisor in a multiple of the anticanonical class, $\frac1\alpha(-K_X)$ with $\alpha>1$. It is well-known that Kähler-Einstein metrics on $X$ with conic singularities along $D$ may exist only if the angle $2\pi\beta$ is bigger than some positive limit value $2\pi\beta_*$. Under the hypothesis that the automorphisms of $D$ are induced by the automorphisms of the pair $(X,D)$, we prove that for $\beta>\beta_*$ close enough to $\beta_*$, such Kähler-Einstein metrics do exist. We identify the limits at various scales when $\beta\rightarrow\beta_*$ and, in particular, we exhibit the appearance of the Tian-Yau metric of $X\setminus D$.
\end{abstract}

\maketitle

\tableofcontents

\section{Introduction}
\subsection{Kähler-Einstein metrics with cone singularities along a divisor}
Let $X$ be a compact Kähler manifold of dimension $n$ and let $D$ be a smooth divisor on $X$. Given $\beta \in (0,1)$, we say that a Kähler metric $\omega$ on $X\setminus D$ is a Kähler-Einstein metric with cone angle $2\pi \beta$ along $D$ ($\keb$ for short) if
\begin{enumerate}[label=$\bullet$]
\item $\Ric \om = \mu\om$ on $X\setminus D$, for some $\mu \in \mathbb R$,
\item $\om$ has cone singularities with cone angle $2\pi \beta$ along $D$.
\end{enumerate}
In this paper, we will only focus on the case $\mu>0$, so that a $\keb$ metric will be assumed to have positive scalar curvature unless stated otherwise. 

The second item in the definition above means that if $(z_1, \ldots, z_n)$ is any holomorphic system of coordinates on a neighborhood $U$ of a point in $D$ such that $D=(z_1=0)$, then $\omega|_{U\setminus D}$ is quasi-isometric to the model flat cone metric
\[\frac{idz_1\wedge d\bar z_1}{|z_1|^{2(1-\beta)}}+\sum_{j\ge 2} idz_j\wedge d\bar z_j. \]
In particular, $\omega$ extends canonically to a Kähler current on $X$ in the cohomology class $-\mu^{-1}c_1(K_X+(1-\beta)D)$ which satisfies
\[\Ric \om = \om + (1-\beta) [D].\]
A natural and important question is, given a pair $(X,D)$, to determine the set $I:=\{\beta \in (0,1); \exists \, \keb \, \text{metric} \, \omega_\beta\}$ and understand what happens to $\omega_\beta$ when $\beta$ approached a critical value $\beta_*\in \partial I$. 

A necessary condition for the existence of a $\keb$ metric is that $-(K_X+(1-\beta)D)$ be ample, but just as when $D=0$, this condition is not sufficient in general. We refer to \cite{CDS0,BBEGZ, DR17, BBJ, Li22} and the references therein for an account of the problem. 

Determining the admissible cone angles and understanding degeneration of conic Kähler-Einstein metrics has drawn a lot of attention in the last decade, see e.g. \cite{Sz13,LS,C2C,BG22, RS22, Del23}. 

\subsection{Statement of the main theorem}

Let us start by introducing the objects involved in the main theorem below. 

% \smallskip

% \noindent
% $\ast$ {\it \underline{Varieties}.}

\subsubsection{Varieties}
%\noindent
Let $X$ be a Fano manifold of dimension $n\ge 2$ such that $-K_X \sim_{\Q} \alpha D$ for some positive rational number $\alpha \in \Q$ satisfying $\alpha >1$. 
Note that the case $\alpha=1$ has already been fully treated in \cite{BG22}. The case $\alpha \in (0,1)$ is quite different and involves the   incomplete Ricci flat cone metrics constructed in \cite{Brendle, GP, JMR}, but we will not discuss it further here. 
% while the case $\alpha<1$ is somehow less interesting. 

From the assumptions above, $D$ is a connected Fano manifold and we will further assume that it admits a Kähler-Einstein metric $\om_D$, i.e. $\Ric \om_D=\om_D$. Let $L:=N_D\simeq \mathcal O_X(D)|_D$  be the normal bundle of $D$ and set $v$ to be the fiber coordinate so that $L=\{(x,v); \, x\in D, v\in L_x\}$. We consider the hermitian metric $h$ on $L$ (unique up to a multiplicative constant) such that $i\Theta(L,h)=\frac{1}{\alpha-1}\omega_D$. 

Next let $\overline L=L\sqcup \{x_\infty\}$ be the one-point compactification of $L$; it can be endowed with a structure of normal projective variety. Moreover, $\overline L\setminus D$ is an affine cone over $D$ with apex $x_\infty$, cf section~\ref{sec setup} below. 

Small neighborhoods $\mathcal U_L \subset X$ of $D$ (resp. $\Delta_L\subset L$ of the zero section $O_L\simeq D\subset L$), which are in general non isomorphic, are pseudoconcave and carry a multivalued meromorphic $n$-form $\Omega$ (resp. $\Omega_L$) with a pole of order $\alpha$ along $D$; it is unique up to constant scaling by pseudoconcavity. One can define 
\begin{equation}
\label{def j0}
j_0 \in \mathbb N_{>0}\cup \{+\infty\}
\end{equation}
to be the largest integer such that there exists a diffeomorphism $\Upsilon:\Delta_L\rightarrow \mathcal{U}_L$ such that $\Upsilon^*\Omega$ and $\Omega_L$ coincide up to order $j_0-1$, cf Proposition~\ref{prop:def-j0}. Observe that since the $n$-form determines the complex structure, $j_0$ coincides with the largest integer such that the formal neighborhoods of $D\subset X$ (resp.  $D\subset L$) of order $j_0-1$ are isomorphic. By definition, this means that $j_0$ is the largest integer such that the ringed spaces $(D, \mathcal O_X/\mathcal I_D^{j_0})$ and $(D, \mathcal O_L/\mathcal I_D^{j_0})$ are isomorphic.

% in two equivalent ways. First, one can consider the affine cone $C(D,L^{-1})$ obtained e.g. as the contraction of the zero section in the total space of $L^{-1}$ and define $\overline L$ to be the projectivization of $C(D,L^{-1})$. Or we can view $\overline L$ as the blow-down of $\mathbb P(N_D) \subset \mathbb P(N_D\oplus \mathcal O_D)$. It is a classical fact that $\overline L$ can be achieved as the central fiber of a $\mathbb C^*$ degeneration of $X$ using the so-called degeneration to the normal cone. \\

% \noindent
% $\ast$ {\it \underline{Metrics}.}

\subsubsection{Metrics}
%\noindent
An important numerical factor in the following is the positive number
\begin{equation}
\label{beta*}
\beta_* :=\frac{\alpha-1}{n}.
\end{equation}
Let us recall that one can always assume that $\alpha<n+1$ (cf section~\ref{sec setup}). That is, we have $\beta_* \in (0,1)$ .

The Calabi Ansatz enables one to construct on $\overline L\setminus D$ two canonical Kähler-Einstein metrics (of positive and zero curvature, respectively) having a conical singularity at the point $x_\infty=\overline L\setminus L$. 

\begin{enumerate}[label=$\bullet$]
\item The Ricci flat Tian-Yau cone metric $\omega_{{\rm TY}, L}:=dd^c |v|_h^{-2\beta_*}$ which is complete near $D\subset L$. 
\item The $\mathrm{KE}_{\beta_*}$ metric $\omega_{\beta_*, L}:=dd^c \log(1+|v|_h^{-2\beta_*})$. It satisfies $\Ric \omega_{\beta_*, L}=\omega_{\beta_*, L} $ on $L\setminus D$, has cone singularities of angle $2\pi \beta_*$ along $D\subset L$ and it is asymptotic to $\omega _{{\rm TY}, L}$ near the conical point. 
\end{enumerate}

We will explain in section~\ref{sec rigidity} that the Calabi Ansatz metric $\omega_{\beta_*, L}$ is rigid in the sense that it is the only Kähler-Einstein metric on $\overline L$ with cone singularities along $D$. 

Finally, Tian and Yau \cite{TY2} proved that under our assumptions, there exists 
\begin{enumerate}[label=$\bullet$]
\item A complete Ricci flat Kähler metric $\omega_{\rm TY}$ on $X\setminus D$ which is asymptotic to $\omega_{{\rm TY}, L}$ at infinity,
\end{enumerate}
and we refer to section~\ref{sec:tian-yau-metric} for more details.

\subsubsection{Results}
For $\beta \in (0,1)$, we consider the Kähler-Einstein equation
\begin{equation}
\tag{$\mathrm{KE}_\beta$}
\label{KEb}
\Ric \omega_\beta = \mu \omega_\beta + (1-\beta) [D]
\end{equation}
where $\mu=\mu(\beta)>0$ is given by $\mu \alpha = \alpha+\beta-1$, so that $\omega_\beta \in c_1(X)=\alpha c_1(D)$ is fixed, independent of $\beta$. Since $\mu(\beta_*)>0$ the difference between normalizing the Einstein constant to be $1$ or $\mu_\beta$ is geometrically irrelevant. Consider the interval \[I:=\{\beta \in (0,1]; \exists \,  \omega_\beta \, \, \mbox{solution of} \eqref{KEb}\}.\] 
It is known that $I$ is either empty or of the form $(\beta_*, \beta^*)$ (or $(\beta_*, \beta^*]$) where $\beta_*$ is defined in \eqref{beta*}, cf Proposition~\ref{prop LS}. The identification of $\beta^*$ is a complicated problem in general and depends very much on the geometry of the pair $(X,D)$, cf \cite{Sz13}. However, $I$ is never empty (i.e. $\beta^*>\beta_*$) as a byproduct of our main result below.

\begin{mthm}
\label{mthm}
Let $X$ be a Fano manifold of dimension $n\ge 2$ and let $D$ be  a smooth divisor such that $-K_X \sim_{\Q} \alpha D$ for some $\alpha \in \Q_{>1}$. Assume that $D$ is Kähler-Einstein and that the restriction map $\mathrm{Aut}^\circ(X,D)\to \mathrm{Aut}^\circ(D)$ is onto. Then there exists $\delta>0$ such that
\begin{enumerate}[label=\arabic*.]
\item For any $\beta\in (\beta_*, \beta_*+\delta)$, there exists a $\keb$ metric $\omb$. 
\item There is convergence
\[(X,\omb)\underset{\beta\to \beta_*}{\longrightarrow} (\overline L, \omega_{\beta_*, L})\] in the Gromov-Hausdorff topology. 
\item We have
\[\ep_\beta^{-1} \omega_\beta \underset{\beta\to \beta_*}{\longrightarrow} \om_{\rm TY}\quad \mbox{ in} \quad  C^\infty_{\rm loc}(X\setminus D),\]
where $\ep_\beta$ is defined by 
\[\ep_\beta=
\begin{cases} 
(\beta-\beta_*)^{\frac{\alpha-1}{nj_0}} & \mbox{if } \, j_0<\alpha-1\\
\left(\frac{\beta-\beta_*}{-\log(\beta-\beta_*)}\right)^{\frac 1n} & \mbox{if } \, j_0=\alpha-1\\
(\beta-\beta_*)^{\frac1n} & \mbox{if } \, j_0>\alpha-1
\end{cases}
\]
where $j_0$ is defined in \eqref{def j0}. 

\end{enumerate} 
\end{mthm}

\begin{rem}
Let us first make a few remarks

\begin{enumerate}
\label{rem intro}
\item As a corollary and under the assumption that $\mathrm{Aut}^\circ(X,D)\to \mathrm{Aut}^\circ(D)$ is onto, we get that the Tian-Yau metric $\omega_{\rm TY}$ is {\it canonically} attached to the pair $(X,D)$, modulo the obvious scaling and the action of  $\mathrm{Aut}^\circ(X,D)$. 
\item The condition that $\mathrm{Aut}^\circ(X,D)\to \mathrm{Aut}^\circ(D)$ is onto is satisfied automatically if $X$ is toric, cf Lemma~\ref{toric}. We are not aware of pairs $(X,D)$ as in our setup for which there are holomorphic vector fields on $D$ which do not lift to $X$. If $X$ is allowed to be singular, such examples can be constructed, cf Example~\ref{ex auto}. 
\item In section~\ref{sec examples} we provide examples where the three possible values for $\ep_\beta$ are achieved. 
%\item The theorem implies that $\{\beta \in (0,1)\,|\, \exists \, \keb \, \mbox{for} \, (\overline L,D)\}=\{\beta_*\}$, showing that the existence of automorphisms can prevent the set of "KE angles" from being open, compare \cite{Don}. See also \cite[Example~2.8]{CR18} for another example in dimension two.
\item Thanks to \cite{Berndtsson15}, any two $\keb$ metrics are related by an element in $\mathrm{Aut}^\circ(X,D)$. In particular the isometry class of the metric space $(X,\omb)$ does not depend on the particular $\keb$ metric. 
\end{enumerate}
\end{rem}

In the case of $(\mathbb P^2, Q)$ where $Q$ is the smooth quadric, the result above was conjectured by Li and Sun \cite{LS} and shortly after numerical evidence to that conjecture was provided by Li \cite{Li15}. Recently, Delcroix \cite{Del23} settled the conjecture in full \--- he actually considers the more general situation of a rank one horosymmetric Fano manifold with the divisor being a codimension one orbit. 
%Note that the scaling factor $\ep_\beta$ is however not obviously explicit there.

\subsection{Strategy of proof}

The proof involves several steps.

\subsubsection{Construction of the model metric near $D$}
We work on the normal bundle $L$ and solve the conic Calabi Ansatz to construct a semi-explicit Kähler-Einstein metric $\omega_{\beta, L}=dd^c \phi_{\beta, L}$ on $L$ with cone singularities of angle $2\pi \beta$ along the zero section $D\subset L$; we refer to section~\ref{sec calabi ansatz} for details relative to the conic Calabi Ansatz in this setting. The Ansatz can be used for every angle $\beta\in (0,1)$ but the angle $\beta_*$ is critical in the following sense. 

For $\beta<\beta_*$, $\omega_{\beta,L}$ acquires a cone singularity along  the divisor at infinity $D_\infty = \mathbb P(0\oplus \mathcal O_D)$ in the smooth compactification $ \mathbb P(L\oplus \mathcal O_D)$ of $L$. 

At $\beta=\beta_*$, $D_\infty$ is contracted, and $\omega_{\beta_*, L}$ has a conical singularity at the singular point at infinity of the compactification $\overline L$ of $L$.  

Finally, for $\beta>\beta_*$, $\omega_{\beta, L}$ is only defined away from some euclidean neighborhood $D_\infty$ (or, equivalently, $x_\infty$). More precisely, the domain of definition of $\omega_{\beta,L}$ is larger and larger as $\beta\searrow \beta_*$ and one can identify a zone (escaping to infinity in $L$) where the potential has the asymptotics
 \[\phi_{\beta, L}=\phi_{\beta_*,L}+\frac{\beta-\beta_*}{r^{2n-2}}+l.o.t.\]
 where $r$ is the radius for the Ricci flat cone metric $g_L$ on $L$ (to which $\omega_{\beta_*, L}$ is asymptotic near the conical point).

\subsubsection{Gluing with the Tian-Yau metric}

Fix two parameters $\ep>0$ and $\beta>\beta_*$. It can be useful to think of $\ep$ as the (modulus of the) base parameter in the deformation $\pi : \mathcal X\to \mathbb C$ of $X$ to the normal cone of $D$. The ultimate goal is to produce for each $\ep>0$ small enough an angle $\beta=\beta(\ep)$ and a $\keb$ metric $\omega_{{\rm KE}, \beta(\ep)}$ on $X$ (thought of as $X_{\ep}=\pi^{-1}(\ep)$). Moreover one wants $\beta(\ep)\to \beta_*$ as $\ep\to 0$.

Using a diffeomorphism identifying a neighborhood of $D\subset X$ with a neighborhood of $D\subset L$, one can cook up a Kähler metric $\omega_{\beta, \ep}$ on $X$ obtained by gluing $\om_{\beta, L}$ and $\ep \om_{\rm TY}$ closer and closer to $D$ as $\ep \to 0$. Constructing a $\keb$ metric $\omega_{\beta, \ep}+dd^c f$ amounts to solving a Monge-Ampère equation 
\[P_{\beta, \ep}(f)=0.\]
The relevant properties of $P_{\beta, \ep}$ are as follows. 
\begin{enumerate}[label=$\bullet$]
\item $P_{\beta, \ep}(0)$ measures the "Kähler-Einstein" defect of $\omega_{\beta, \ep}$. It arises from three factors: the difference between the complex structures of $X$ and $L$, the cut-offs performed to glue the two model metrics, the fact that $\om_{\rm TY}$ is Ricci-flat rather than having positive Einstein constant. 
\item $dP_{\beta, \ep}(0)= \Delta_{\omega_{\beta, \ep}}+\mu_\beta=:L_{\beta, \ep}$ where $\mu_\beta$ is our choice of Einstein constant (it depends linearly on $\beta$ with $\mu_{\beta_*}>0$, so one can think of it as being constant). Small eigenvalues of $L_{\beta, \ep}$ represent the obstructions to deform $\omega_{\beta, \ep}$ to a $\keb$ metric. 
\end{enumerate}

\subsubsection{Resolution modulo the obstruction}

 The obstructions arise at least conceptually from three sources: harmonic functions on $(X\setminus D, g_{\rm TY})$, harmonic functions on the cone $(L, g_{L})$ and functions on $L$ in the kernel of $\Delta_{\omega_{\beta_*,L}}+\mu_{\beta_*}$. One can relatively easily kill the first two using suitable weighted Hölder spaces, hence we are left to dealing with the third kind of obstruction. 
 
 Since $\omega_{\beta_*, L}$ is Kähler-Einstein, there is a one-to-one correspondence between $\mathrm{ker}(\Delta_{\omega_{\beta_*,L}}+\mu_{\beta_*})$ and $H^0(L, T_L)\simeq H^0(D, T_D)\oplus \mathbb C \xi$ where $\xi$ is the radial vector field generating the $\mathbb C^*$ action. The assumption that vector fields on $D$ lift to $X$ combined with the fact that $\mathrm{Aut}^\circ(D)=G^{\mathbb C}$ is reductive (since $D$ is KE) allows us to work equivariantly with respect to a compact group $G\subset \mathrm{Aut}^\circ(X,D)$ and kill the obstruction induced by $H^0(D, T_D)$.
 
 One can find an explicit function $\tau_\beta$ solving $i_{\xi}\omega_{\beta, L}=\dbar \tau_\beta$ which we then transplant to a function $\tau_{\beta, \ep}$ on the whole $X$ satisfying $|L_{\beta, \ep} \tau_{\beta, \ep}|\ll 1$. Therefore $\tau_{\beta, \ep}$ is asymptotically in the cokernel of $L_{\beta, \ep}$ and represents the one obstruction remaining in order to solve $P_{\beta, \ep}(f)=0$. In particular, one can solve modulo obstruction, i.e. one can find a (unique) function $f=f_{\beta, \ep}$ such that 
 \[P_{\beta, \ep}(f_{\beta, \ep})^\perp=0\]
 i.e. $P_{\beta, \ep}(f_{\beta, \ep})=a(f_{\beta, \ep}) \cdot \tau_{\beta, \ep}$ for a constant $a(f_{\beta, \ep})\in \mathbb R$. Actually, even that equation cannot be solved readily because $P_{\beta, \ep}(0)$ is not small enough to apply the implicit function theorem, cf Remark~\ref{rem TY formel nec}. This has to do with the fact that the Tian-Yau metric does not provide a good enough approximation of the solution (e.g. since it is Ricci flat). So it is necessary to construct a formal perturbation of $\om_{\rm TY}$ at a large order and use that metric instead of $\om_{\rm TY}$ in the gluing.

\subsubsection{Deforming the cone angle to kill the obstruction}

 We have two parameters $\ep, \beta$ and one obstruction $a(\beta, \ep):=a(f_{\beta, \ep})$ to kill. A signification fraction of the present paper is devoted to showing that the obstruction has an expansion which looks like
 \[a(\beta, \ep)= F(\ep)- (\beta-\beta_*)+ l.o.t.\]
 where $F(\ep)$ is a {\it positive} quantity which has either the form $F(\ep)=\ep^\nu$ (for some $1\le \nu \le n$) or $F(\ep)=\ep^n \log \frac 1\ep$ depending on the convergence rate of $J_X$ to $J_L$ near $D$. The dominant positive contribution to the obstruction, that is $F(\ep)$, comes from either the complex structure change (between $X$ and $L$), or cutting off the $\omega_{\rm TY}$, or both. The dominant negative contribution to the obstruction, that is $- (\beta-\beta_*)$, comes from cutting off $\omega_{\beta, L}$ and its particular shape is due to the "Green's function like" term $\frac{\beta-\beta_*}{r^{2n-2}}$ in the expansion of $\phi_{\beta, L}$. 
 
 Ensuring that $F(\ep)$ is positive is absolutely crucial since it allows us to fix $\ep$ and deform the cone angle as $\beta=\beta_*+F(\ep)+\gamma$ where $l.o.t. \ll |\gamma| \ll F(\ep)$ will vary to ensure that $a(\beta, \ep)$ takes positive and negative values hence vanishes for some value $\beta=\beta(\ep)$. For technical reasons, we will actually fix $\beta$ and vary $\ep$ to the same effect, in order to circumvent the troubles due to the functional spaces changing as $\beta$ varies. 
 
 The computation of $F(\ep)$ relies on two things : (i) the (delicate) construction of a tubular neighborhood of $D$ where the difference between the holomorphic volume forms on $L$ and $X$ {\it has a sign} and (ii) the computation of the sub-leading term {\it and its sign} in the asymptotic expansion of the Tian-Yau potential. This result (Theorem \ref{thm:dev-TY}) may be of independent interest.
 
 \subsection{Comparison with \cite{BG22}} In the paper \cite{BG22}, we had previously treated the case where $\alpha=1$ and we would like to briefly single out the main differences between the techniques involved in that paper and in the present one. 
 
 In \cite{BG22}, the main difficulties stemmed from the collapsing of the $\keb$ metrics at stake coupled with the fact that the cone angle $\beta$ goes to zero; that made the Schauder estimates extremely delicate to establish. The MA equation was obstructed as well (again because of the radial vector field in the normal bundle) but killing the obstruction with the cone angle turned out to be rather easy.  
 
 In our present situation, there is no collapsing and $\beta$ remains bounded away from $0$ and $1$. However, the Calabi Ansatz is much trickier to analyse since we cannot use a scaling argument anymore. More importantly, it is much harder to kill the obstruction as it relies on a very fine understanding of the geometry of the pair $(X,D)$ and its associated canonical KE metrics, as we have explained in the previous paragraph. 
 
 \subsection*{Acknowledgements} The authors are grateful to Hajo Hein for the many enlightening conversations about this problem, and to Thibaut Delcroix for insightful discussions and explaining Example~\ref{ex auto} to us. We would also like to thank the referee for helping improve the exposition of the article. H.G. is partially supported by the French Agence Nationale de la Recherche (ANR) under reference ANR-21-CE40-0010 (KARMAPOLIS). 

\section{Geometric setup}
\label{sec setup}
Let $X$ be a Fano manifold of dimension $n\ge 2$, and let $D$ be a smooth divisor such that
\begin{enumerate}[label = $\bullet$]
\item $-K_X\sim_{\mathbb Q} \alpha D$ for some rational number $\alpha>1$. 
\item $D$ admits a Kähler-Einstein metrics $\om_D$; i.e. $\Ric \om_D=\om_D$. 
\end{enumerate}

Recall that the identify $-K_X\sim_{\mathbb Q} \alpha D$ means that there exists an integer $m\ge 1$ such that $m\alpha \in \mathbb N$ and that the line bundles $K_X^{\otimes m}$ and $\mathcal O_X(-m\alpha D)$ are isomorphic. 

Let us observe that $\alpha \le n+1$. Indeed, by Mori's bend and break, there is always a (rational) curve $C$ such that $(-K_X\cdot C)\le n+1$. In particular, this implies that $\alpha \le \frac{n+1}{(D\cdot C)} \le n+1$. By Kobayashi-Ochiai, equality occurs if and only if $X=\mathbb P^n$ and $D$ is an hyperplane. The latter case is irrelevant for our purposes, so we will assume that $\alpha<n+1$ in the following.

\subsection{Deformation to the normal cone $\overline L$}
Let $L:=N_D$  be the normal bundle of $D$, which is an ample divisor and set $v$ to be the fiber coordinate so that $L=\{(x,v); \, x\in D, v\in L_x\}$. We consider the hermitian metric $h$ on $L$ (unique up to a multiplicative constant) such that $i\Theta(L,h)=\frac{1}{\alpha-1}\omega_D$. 

Next let $\overline L$ be the one-point compactification of $L$; it can be endowed with a structure of normal projective variety in two equivalent ways. First, one can consider the affine cone $C(D,L^{-1})$ obtained e.g. as the contraction of the zero section in the total space of the negative bundle $L^{-1}$ and define $\overline L$ to be the projectivization of $C(D,L^{-1})$. Or we can view $\overline L$ as the blow-down of $\mathbb P(N_D) \subset \mathbb P(N_D\oplus \mathcal O_D)$. It is a classical fact that $\overline L$ can be achieved as the central fiber of a $\mathbb C^*$ degeneration of $X$ using the so-called degeneration to the normal cone. We recall the construction below.\\

Consider the family $\mathfrak X:=\mathrm{Bl}_{D\times \{0\}}(X\times \mathbb C)\to \mathbb C$. The fibers $\mathfrak X_t$ for $t\neq 0$ are isomorphic to $X$ while $\mathfrak X_0 = \mathrm{Bl}_DX \cup \mathbb P(N_D\oplus \mathcal O_D)\simeq X \cup \mathbb P(N_D\oplus \mathcal O_D)$ and the two $n$-dimensional varieties meet along $D$ and $\mathbb P(N_D\oplus 0) \simeq \mathbb P(N_D)$, the section at infinity. One can contract $\mathrm{Bl}_DX \subset \mathfrak X$ via a map $\mathfrak X \to \mathcal X$; the resulting map is called $\pi: \mathcal X \to \mathbb C$. Its fibers outside the origin are isomorphic to $X$ and identified via a $\mathbb C^*$ action that lifts from $\mathbb C$, while the central fiber is the projective cone $\mathbb P(N_D\oplus \mathcal O_D)$ with the section at infinity contracted to a point, or said otherwise it is the one-point compactification $\overline L$ of $N_D$ mentioned above, cf e.g. \cite[\textsection~4.1]{Lisharp} combined with the fact that for $m\ge 0$, one has $H^1(X,mD)=H^1(X,K_X+(-K_X+mD))=0$ by Kodaira vanishing since $-K_X+mD$ is ample.  

%\begin{rem} {\color{blue} pas sûr que ce soit utile de laisser}
%Note that when $(X, D)=(\mathbb P^n, H)$ (the case we have excluded), then the manifold $ \mathbb P(N_D\oplus \mathcal O_D)\simeq \mathbb P(\mathcal O_{\mathbb P^{n-1}}(-1)\oplus \mathcal O_{\mathbb P^{n-1}})$ can easily be identified to the graph $V$ of a linear projection $\mathbb P^n\dashrightarrow \mathbb P^{n-1}$ as follows. Take $\ell \in \mathbb P^{n-1}$, $x\in \ell \subset \mathbb C^n$, and for any $\lambda\in \mathbb C$ such that $(x,\lambda)\neq (0,0)$ send  $\mathbb C(x,\lambda)$ to $([x:\lambda], \ell)\in V\subset \mathbb P^n \times \mathbb P^{n-1}$. Since $V$ is simply the blow up of the point of indeterminacy in $\mathbb P^n$, we see that $(\mathcal X_0,D_0)\simeq (\mathbb P^n,H)$.
%\end{rem}

On the singular cone $\overline L$, the Calabi-Ansatz enables to construct a $\mathrm{KE}_{\beta_*}$ metric 
\begin{equation}
\label{ombL}
\omega_{\beta_*, L}:=dd^c \log(1+|v|_h^{-2\beta_*}),
\end{equation}
cf \eqref{phi beta*}. The metric is smooth and satisfies $\Ric \omega_{\beta_*, L}=\omega_{\beta_*, L} $ on $L\setminus D$, has cone singularities of angle $2\pi \beta_*$ along $D\subset L$ and it is asymptotic to the Tian-Yau cone metric $\omega _{{\rm TY}, L}=dd^c |v|^{-2\beta_*}$ near the conical point. The existence of $\omega_{\beta_*, L}$ has important consequences as we now explain. \\

\subsection{Smallest possible cone angle}
For a given value $\beta \in (0,1)$, we consider the Kähler-Einstein equation
\begin{equation}
\tag{$\mathrm{KE}_\beta$}
\label{KEb}
\Ric \omega_\beta = \mu \omega_\beta + (1-\beta) [D]
\end{equation}
where $\mu=\mu(\beta)>0$ is given by $\mu \alpha = \alpha+\beta-1$, so that $\omega_\beta \in c_1(X)=\alpha c_1(D)$ is fixed, independent of $\beta$. 
Consider the interval \[I:=\{\beta \in (0,1]; \exists \,  \omega_\beta \, \, \mbox{solution of} \eqref{KEb}\}.\] 
Recall that if $\mathrm{Aut}^\circ(X,D)= \{1\}$, then $I$ is open by \cite{Don}. Actually, much more can be said even without the assumption on holomorphic vector fields. 

\begin{prop}[\cite{LS}]
\label{prop LS}
There exists $\beta^*\in (0,1]$ such that 
\begin{equation}
\label{I connexe}
I=\emptyset, \,\,(\beta_*, \beta^*) \,\, \mbox{or} \, \,(\beta_*, \beta^*]
\end{equation}
 where $\beta_*$ is defined in \eqref{beta*}.  
 \end{prop}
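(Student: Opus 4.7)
My plan splits the statement into two claims: (a) $I \subset (\beta_*, 1]$, i.e.~no $\keb$ metric exists at or below the critical angle $\beta_*$, and (b) $I$ is an interval with $\beta_*$ as its (open) left endpoint.

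For (a), I would use the deformation to the normal cone $\pi\colon \cX \to \mathbb C$ recalled in this very section as a test configuration for the log Fano pair $(X,(1-\beta)D)$, with central fiber $\overline L$ and general fiber $X$. Extending the polarisation $-K_X-(1-\beta)D$ from $\cX \setminus \cX_0$ to a $\mathbb Q$-line bundle on $\cX$, one computes the (log) Ding invariant by intersection theory on the smooth model $\mathfrak X = \mathrm{Bl}_{D\times\{0\}}(X\times \mathbb C)$, tracking the contraction $\mathfrak X \to \cX$ carefully. The answer comes out to be $c(\beta - \beta_*)$ for a strictly positive constant $c$ depending only on $(X,D)$; the normalisation $\mu\alpha = \alpha+\beta-1$ together with $\beta_* = (\alpha-1)/n$ is precisely what produces this clean sign. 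The existence of $\omega_\beta$ forces log Ding-polystability with respect to every non-product test configuration (Berman--Boucksom--Jonsson), and since $\cX$ is non-product (because $X \not\simeq \overline L$), one concludes $\beta > \beta_*$.

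For (b), I would run a continuity method in $\beta$. Given $\bar\beta \in I$, set $J = I \cap (\beta_*, \bar\beta]$; the goal is $J = (\beta_*, \bar\beta]$. Openness at each $\beta_0 \in J$ comes from an equivariant implicit function theorem: since $\Aut^\circ(X,D)\to \Aut^\circ(D)$ is onto and $D$ admits a KE metric, a maximal compact $G \subset \Aut^\circ(X,D)$ is reductive and one works with $G$-invariant potentials, on which the linearisation $\Delta_{\omega_{\beta_0}} + \mu(\beta_0)$ has trivial kernel. For closedness along $\beta_n \to \beta_\infty \in (\beta_*, \bar\beta]$, the lower bound $c(\beta-\beta_*) \geq c(\beta_\infty-\beta_*)>0$ on Ding invariants, uniform over the compact sub-interval $[\beta_\infty, \bar\beta]$, yields uniform coercivity of the log Ding functional, which in turn gives uniform $C^0$ bounds and then uniform conic Hölder regularity (à la Jeffres--Mazzeo--Rubinstein); extracting a limit $\omega_{\beta_\infty}$ gives a solution of \eqref{KEb}. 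Taking $\bar\beta \to \sup I =: \beta^*$ gives $(\beta_*,\beta^*)\subset I$, and $\beta^*\in I$ precisely when these estimates survive the endpoint, yielding the two possible shapes in \eqref{I connexe}.

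The main obstacle I anticipate is the sign computation of the Ding invariant in (a): because $\overline L$ is singular and carries a log boundary, one must keep careful track of the contributions from the contraction $\mathfrak X \to \cX$, from the divisor $\mathcal D$ (the closure of $D\times\mathbb C^*$ in $\cX$), and from the relative dualising sheaf. Pinning the coefficient down to be exactly proportional to $\beta - \beta_*$ rather than $\beta - c$ for some other $c$ is precisely what identifies the left endpoint of $I$ with $(\alpha-1)/n$, as opposed to a more general Seshadri-type invariant. Once that sign computation is in place, the remainder of the argument reduces to fairly standard continuity-method machinery adapted to the conic setting.
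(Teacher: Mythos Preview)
Your part (a) matches the paper's Step~1 closely: the deformation to the normal cone is used as a test configuration, and the sign of its log Futaki (or Ding) invariant is computed to be a positive multiple of $\beta-\beta_*$. The paper also offers an alternative via Fujita's valuative criterion applied to the divisor $D$, which gives the same threshold with less bookkeeping.

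Your part (b), however, diverges from the paper and contains a real gap. You claim that the lower bound $c(\beta-\beta_*)$ on the Ding invariant of the deformation-to-the-normal-cone test configuration yields uniform coercivity of the log Ding functional on $[\beta_\infty,\bar\beta]$. But positivity of the invariant for a \emph{single} test configuration does not imply coercivity; coercivity (equivalently, uniform K-stability) requires a uniform positive lower bound over \emph{all} test configurations, or a $\delta$-invariant strictly greater than $1$. Nothing in your argument controls the other degenerations. Separately, your openness step invokes the surjectivity of $\Aut^\circ(X,D)\to\Aut^\circ(D)$, which is a hypothesis of the Main Theorem but is not assumed in Proposition~\ref{prop LS}.

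The paper avoids a continuity method entirely. Its Step~2 observes that at the critical angle $\beta_*$ the log Ding functional $F_{\beta_*}$ is \emph{bounded below} (a consequence of log K-semistability at $\beta_*$, argued via convexity along weak geodesics). Step~3 then uses that $\beta\mapsto F_\beta$ is affine: if $F_{\beta_0}$ is proper (because a $\mathrm{KE}_{\beta_0}$ metric exists) and $F_{\beta_*}$ is bounded below, then $F_\beta$ is proper for every $\beta\in(\beta_*,\beta_0]$ by interpolation, hence a $\keb$ metric exists there by \cite{DR17}. This interpolation argument replaces both your openness and closedness steps at once, requires no automorphism hypothesis, and sidesteps any compactness estimates for sequences of conic metrics.
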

 
 \begin{proof}[Sketch of proof]
 There are three main steps which we sketch below. 
 
 \smallskip
 
 {\it Step 1.} 
 
 \noindent
 The existence of the $\keb$ metric $\omega_{\beta_*, L}$ on the central fiber of a non-trivial test configuration shows that $\beta_*\notin I$. Actually, one can show $I\subset (\beta_*,1)$. One way to do is it to directly compute the Futaki invariant of $(X,(1-\beta)D)$ associated to that test configuration \cite{LS} and show that when $\beta\le \beta_*$, it has the "wrong" sign. Alternatively, one can use Fujita valuative criterion \cite{Fujita16} with respect to the prime divisor $D$, and polarization $L=-(K_X+(1-\beta)D)=-\mu K_X$ with $\mu=  \frac 1 \alpha(\alpha-1+\beta)$. Then, one has $A_{(X,(1-\beta)D)}(D):=1+\mathrm{ord}_D(K_X-(K_X+(1-\beta)D))=\beta$ and $\mathrm{vol}(L-xD)=(\mu-\frac x \alpha)^n (-K_X^n)$, hence the $\beta$ invariant 
\[\beta(D)= A_{(X,(1-\beta)D)}(D) \cdot (L^n)- \int_0^{+\infty}\mathrm{vol}(L-xD)dx\] satisfies $\beta(D)=\mu^n (-K_X^n)\left[ \beta- \frac{\mu \alpha}{n+1}\right]=\frac{n}{n+1}\mu^n (-K_X^n)\left[ \beta- \beta_*\right]$. In particular, $\beta(D)\ge 0$ if and only if $\beta \ge \beta_*$.

 \smallskip
 
 {\it Step 2.} 
 
 \noindent The Ding functional $F_\beta$ for $(X,(1-\beta)D)$ is bounded below for $\beta=\beta_*$ by a classical argument relying on convexity of Mabuchi along (weak) geodesics \cite{LS}.

 \smallskip
 
 {\it Step 3.} 
 
 \noindent The functional  $F_\beta$ is affine in $\beta$ and existence of a $\keb$ metric is equivalent to a suitable notion of properness by \cite{DR17}. Hence the existence of a $\mathrm{KE}_{\beta_0}$ metric implies properness for all $\beta \in (\beta_*, \beta_0]$, hence the existence of a $\keb$ metric for each of these angles.  That is, $I$ is connected and $\inf I \in \{\beta_*, +\infty\}$. 
 \end{proof}

\subsection{Rigidity of  $\omega_{\beta_*, L}$}
\label{sec rigidity}
The aim of this short paragraph is to explain that the cone angle $2\pi \beta_*$ is the only one for which $(\overline L,D)$ admits a Kähler-Einstein cone metric. The rigidity is related to the existence of holomorphic vector field tangent to $D$ (namely, the radial vector field induced by the $\mathbb C^*$ action), since in the absence of such vector fields, the set of possible cone angles is open \cite{Don}. Such a rigidity phenomenon had already been observed for surfaces in \cite[Example~2.8]{CR18}.

\begin{prop}
\label{prop rigidity}
The pair $(\overline L,D)$ admits a $\keb$ metric if, and only if $\beta=\beta_*$. 
\end{prop}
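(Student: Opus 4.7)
The ``if'' direction is the content of the Calabi Ansatz construction of $\omega_{\beta_*,L}$ recalled above (see \eqref{ombL}), so the plan addresses the converse: if a $\keb$ metric $\omega_\beta$ exists on $\overline L$, then $\beta=\beta_*$.

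The starting observation is that $\overline L$ is itself a projective cone over $D$ with normal bundle $L\to D$ along the zero section. Consequently, the deformation to the normal cone of $D\subset\overline L$ reproduces $\overline L$ as its central fiber: it is the \emph{product} test configuration $\overline L\times\C\to\C$, equipped with the $\C^*$-action scaling the fibers of $L$. Let $\xi$ denote its generating holomorphic vector field on $\overline L$; it is tangent to $D$.

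Assuming the existence of $\omega_\beta$, the klt log Fano pair $(\overline L,(1-\beta)D)$ is K-polystable, via the conic/log formulation of \cite{BBJ,Li22}. K-polystability forces the classical log Futaki invariant of every holomorphic vector field to vanish; in particular, $\mathrm{Fut}(\xi)=0$ with respect to $(\overline L,(1-\beta)D)$. On the other hand, $\mathrm{Fut}(\xi)$ equals the Donaldson-Futaki invariant of the product test configuration above, which I compute via the Fujita-style valuative criterion applied to $v=\ord_D$ exactly as in Step~1 of the proof of Proposition~\ref{prop LS}: since $\overline L$ is smooth along $D$, one has $A_{(\overline L,(1-\beta)D)}(v)=\beta$; the intersection-theoretic side runs verbatim thanks to $-K_{\overline L}\sim_{\Q}\alpha D$, and yields
\[
\mathrm{Fut}(\xi)\;=\;\frac{n}{n+1}\,\mu^n\,(-K_{\overline L}^{n})\,(\beta-\beta_*),\qquad \mu=\frac{\alpha-1+\beta}{\alpha}.
\]
With a strictly positive prefactor, the vanishing found above forces $\beta=\beta_*$.

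The main obstacle is the clean justification of the K-polystability/Futaki-vanishing implication for the conic log pair $(\overline L,(1-\beta)D)$, since $\overline L$ is singular at the apex $x_\infty$ and the cone singularity along $D$ must be incorporated into the functional-analytic framework. If that abstract machinery feels too heavy here, a more elementary substitute is available: invoke uniqueness of conic KE metrics modulo $\Aut^\circ(\overline L,D)$ in the style of \cite{Berndtsson15} to arrange $\omega_\beta$ to be $S^1$-invariant under the scaling action, reduce the resulting $\C^*$-equivariant KE equation to the Calabi Ansatz ODE on the radial coordinate, and verify by direct inspection of the model solutions that an extension across the apex $x_\infty$ with the right conic behavior along $D$ is possible exactly when $\beta=\beta_*$.
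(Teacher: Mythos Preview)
Your main approach is correct and shares the paper's key idea: both arguments show that the log Futaki invariant of the radial vector field $\xi$ is affine in $\beta$ and vanishes only at $\beta=\beta_*$. The difference lies in how this invariant is computed. The paper works directly on $\overline L$: it builds an explicit Calabi Ansatz reference metric $\omega=dd^c\varphi(u)$, identifies the moment map $f=-\varphi'$ (equal to $1$ on $D$ and strictly decreasing), and then invokes Hashimoto's formula $\mathrm{Fut}_{\xi,\beta}=\mathrm{Fut}_{\xi,1}-a(1-\beta)$ with $a>0$ to obtain affineness in $\beta$. You instead recycle the Fujita $\beta$-invariant computation from Step~1 of Proposition~\ref{prop LS}, transferred to $\overline L$ via $-K_{\overline L}\sim_{\Q}\alpha D$, and identify it with $\mathrm{Fut}(\xi)$ through the observation that the normal-cone degeneration of $D\subset\overline L$ is the product test configuration induced by $\xi$. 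Your route is slicker in that it reuses work already done for $X$, but it leans on heavier machinery (the singular log K-polystability framework of \cite{BBJ,Li22} and the identification of the valuative $\beta$-invariant with the Donaldson--Futaki invariant of the associated test configuration); the paper's computation is more self-contained once one accepts the singular version of \cite{Hashimoto19}. Both are sketches in the same spirit.

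One caution about your fallback approach: obtaining $S^1$-invariance from Berndtsson-type uniqueness is fine, but $S^1$-invariance under the scaling action alone does not force the KE metric into Calabi Ansatz form (i.e.\ dependence on $u=\log|v|_h^2$ only); you would additionally need invariance under $\Aut^\circ(D)$, or a separate argument that $S^1$-invariant KE metrics on $\overline L$ are automatically of cohomogeneity one. Without that step the reduction to an ODE is incomplete.
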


We provide below only a rough sketch of proof based on the computation of the log Futaki invariant associated to the radial vector field $\xi$. Note that Proposition~\ref{prop rigidity} is also a direct consequence of the main theorem (i.e. the existence of $\keb$ metrics on $(X,D)$ for $\beta\in (\beta_*, \beta_*+\delta)$) and the non-existence of $\keb$ metrics on $(X,D)$ for $\beta\le \beta_*$, cf Proposition~\ref{prop LS}.

\begin{proof}
We claim that there exists on $L$ a Kähler metric $\omega:=dd^c \varphi(u)$ with $u=\log |v|_h^2$ such that the following properties are met
\begin{enumerate}[label=$(\roman*)$]
\item $\varphi$ is nonincreasing, convex and coincides with $-u+e^u$ near $-\infty$ and $e^{-u}$ near $+\infty$. 
\item $\varphi'$ is a potential for $\xi$, i.e. $i_\xi \omega= \dbar \varphi'$. 
\end{enumerate}
Indeed, with the help of a drawing one can easily convince oneself that such a function $\varphi$ exists. It induces a Kähler metric on $L$ thanks to the Calabi Ansatz, cf \eqref{omb} below, and the metric actually extends smoothly to a Kähler metric on the singular cone $\overline L$. The second item is also a direct consequence of \eqref{omb} and the identity $\dbar \varphi'= \varphi'' \dbar u$. 

The two properties above imply that $f:=-\varphi' \ge 0$ is identically $1$ on $D$ and decreases with $u$.  Moreover, we have $\omega=\varphi''(u)du \wedge d^c u +p^*{\omega|_D}$ hence we deduce
\begin{eqnarray*}
\frac{1}{\mathrm{vol}(\overline L,\omega)}\int_{\overline L} f \frac{\omega^n}{n!} & = & \frac{1}{\mathrm{vol}(\overline L,\omega)}\int_{-\infty}^{+\infty} f(u) \varphi''(u) du \wedge d^c u \int_D\frac{\omega^{n-1}}{(n-1)!}\\
&<& \frac{1}{\mathrm{vol}(\overline L,\omega)} \int_{-\infty}^{+\infty} \varphi''(u) du \wedge d^c u \int_D\frac{\omega^{n-1}}{(n-1)!} \\
&=& 1 \\
&=&  \frac{1}{\mathrm{vol}(D,\omega)} \int_Df\frac{\omega^{n-1}}{(n-1)!}.
\end{eqnarray*}
Thanks to (the singular version of) \cite{Hashimoto19}, the log Futaki invariant associated to $(\overline L, D, \beta, -\xi,f)$ satisfies
\[\mathrm{Fut}_{\xi, \beta}([\omega])=\mathrm{Fut}_{\xi, 1}([\omega])-a(1-\beta)\]
for some constant $a>0$. In particular, there is at most one value of $\beta$ such that the Futaki invariant vanishes, i.e. $\mathrm{Fut}_{\xi, \beta}([\omega])=0$. Of course, this is none other than $\beta=\beta_*$. The proposition follows. 
\end{proof}

\section{Conic Calabi Ansatz: construction and asymptotics}
\label{sec calabi ansatz}
In this section, we rely on the Calabi Ansatz to construct (possibly incomplete) $\keb$ metrics $\omega_{\beta, L}$ on the normal bundle $L$ (with cone singularities along the zero section $D\subset L$) for each angle $\beta \in (0, 1)$ \--- the value $\beta=\beta_*$ being critical. We then analyze their asymptotic behavior as $\beta\searrow \beta_*$. 

\subsection{Reduction to an ODE}\label{sec:reduction-an-ode}
Let us record notations that will be used throughout the section

\[\begin{array}{|l|l|l|}
\hline
\beta_*=\frac{\alpha-1}n & a=\frac{\mu}{n+1} & C_\beta=\frac{\alpha^n}{n+1}(\beta-\beta_*) \\
 \mu= \frac{1}{\alpha}(\alpha+\beta-1) & b=\beta_*=\frac{\alpha-1}{n} &  \\
 \lambda=\frac{\alpha-1}{\mu} & \psi=-\phi-\lambda u &\\
\hline
\end{array}\]

The the last constant $C_\beta$ is recorded here for clarity but it will only be determined a few lines below. 
\bigskip

On $L=N_D$, we want to solve
\begin{equation}
\label{KE}
\Ric \omega_{\beta}= \mu \omega_\beta+(1-\beta)[D]
\end{equation}
with
\begin{enumerate}[label=(\roman*)]
\item $\omega_\beta=dd^c \phi(u)$ where $u=\log |v|^2_h$, and $h$ is a smooth hermitian metric on $L$ such that $i\Theta(L,h)=\frac{1}{\alpha-1} \omega_D$. 
\item $\omega_\beta$ has conic singularities with cone angle $2\pi \beta$ along $D$. 
\item $\phi(u)\sim - \alpha u$ when $u\to -\infty$. 
\end{enumerate}
The last condition just means that the current ${\omega_\beta}|_D$ belongs to $\alpha c_1(D)|_D$. It is an arbitrary normalization that imposes the value of the Einstein constant to satisfy
\[\mu \alpha =\alpha+\beta-1.\]
The fact that $\omega_\beta$ is a metric is equivalent to having 
\begin{equation}
\label{convex}
-\phi'>0, \quad \phi''>0.
\end{equation}
It is easy to check that
\begin{equation}
\label{OmegaL}
\Xi :=e^{-(\alpha-1)u}du\wedge d^c u \wedge \omega_D^{n-1}
\end{equation}
defines a Ricci flat volume form on $L$ with a pole of order $\alpha$ along the zero section. The metric 
\begin{equation}
\label{omb}
\omega_\beta=dd^c \phi(u)=\phi''(u) du \wedge d^c u -\frac 2{\alpha-1} \phi'(u)\omega_D
\end{equation}
satisfies
\begin{equation}
  \label{eq:35}
  \omega_\beta^n = a_{n,\alpha} \phi''(u)(-\phi'(u))^{n-1} du \wedge d^cu \wedge \omega_D^{n-1}, \quad
  a_{n,\alpha}=\frac{2^{n-1}n}{(\alpha-1)^{n-1}}.
\end{equation}
Therefore $\omega_\beta$ is a solution of
\begin{equation}
  \label{eq:32}
  \omega_\beta^n=a_{n,\alpha}e^{-\mu\phi}\Xi,
\end{equation}
and therefore a solution of \eqref{KE}, if $\phi$ solves
\begin{equation}
\label{MA}
(-\phi')^{n-1}\phi''= e^{-\mu \phi-(\alpha-1)u}.
\end{equation}
Multiplying each side by $-\mu \phi'-(\alpha-1)$ and integrating, we get
\begin{equation}
\label{MA2p}
(-\phi')^{n}(b+a\phi')= e^{-\mu(\phi+\lambda u) }-C_\beta
\end{equation}
for some constant $C_\beta$ to determine. 

%\begin{rem}
%\label{translation}
%If $\phi(u)$ solves \eqref{MA} then for any constant $c\in \mathbb R$, the function $\phi(u+c)+\lambda c$ solves \eqref{MA} as well. 
%\end{rem}

By setting $\psi:=-\phi-\lambda u$, we get the equivalent autonomous equation
\begin{equation}
\label{MA2}
(\psi'+\lambda)^{n}(b-a(\psi'+\lambda))= e^{\mu \psi}-C_\beta
\end{equation}

In order to choose the right constant $C_\beta$, let us remember that when $u\to -\infty$, we want our solution $\psi$ to satisfy $\psi(u) \sim (\alpha-\lambda)u = \frac{\beta}{\mu} u$, so that $\psi'+\lambda \sim \alpha$. Therefore we want the constant $C_\beta$ to satisfy
\begin{equation}
\label{cbeta}
\alpha^n(b-a\alpha)=\frac{\alpha^n}{n+1}\big(\frac{\alpha-1}n-\beta\big)=-C_\beta.
\end{equation}
From now on, we impose the constant $C_\beta$ from \eqref{MA2} to be given by the equation \eqref{cbeta} above, i.e. 
\[C_\beta= \frac{\alpha^n}{n+1} \cdot (\beta-\beta_*).\]
 Observe that
\[C_{\beta_*=0} \qquad \mbox{and} \,\, C_\beta>0 \,\, \mbox{for } \, \beta>\beta_*.\]
In the sections that follow, we investigate the existence and the behavior of solutions of \eqref{MA2}. \\

\emph{From now on, we will assume that $\beta>\beta_*$ until the end of section~\ref{sec calabi ansatz} with the exception of section~\ref{sec beta petit}.}

\subsection{Construction of $\psi$ near $-\infty$}

In order for the problem to be well posed, we fix the initial value
$\psi(\hat u_0)=\hat \psi_0$ for some couple $(\hat u_0,\hat \psi_0)$ that will be determined later.

 Let $F(t)=t^n(b-at)$, defined on $[0,\alpha]$.
 \begin{center}
 \begin{tikzpicture}
 \tkzInit[xmax=2.5,ystep=0.5,ymax=1]
  \draw[->] (-0.1, 0) -- (5, 0) node[right] {\small $t=\psi'+\lambda$};
  \draw[->] (0, -1) -- (0, 1) node[above] {$F(t)$};
  \draw[dotted] (3, 0) -- (3, 0.68) node[above] {};
   \draw[dotted] (4.4, 0) -- (4.4, -0.9) node[above] {};

  \node[align=left] at (3,-0.3) {\small $\lambda$};
    \node[align=left] at (4.4,0.3) {\small $\alpha$};
    \draw (4.4, 0.1) -- (4.4, -0.1) node[above] {};
  \draw (3, 0.1) -- (3, -0.1) node[above] {};

  \draw[scale=2, domain=-0.1:2.2, smooth, variable=\x, blue] plot ({\x}, {2*\x*\x*\x*(0.2-0.1*\x)});
\end{tikzpicture}
\end{center}
 We have $F(0)=0$, $F(\alpha) \le 0$ with equality if and only if $\beta=\beta_*$. $F$ is increasing up until $t=\lambda$, and then decreasing. The value 
 \begin{equation}
 \label{Fl}
 F(\lambda)=\frac{\beta_*\lambda^n}{n+1}>0
 \end{equation}
  is explicit but does not play any role. $F$ admits two inverses
 \[G_1:[0,F(\lambda)]\to [0, \lambda] \qquad \mbox{and} \qquad G_2:[F(\alpha)=-C_\beta,F(\lambda)]\to [\lambda,\alpha].\]
 The equation \eqref{MA2} can be reformulated as 
 \begin{equation}
 \label{ODE}
 F(\psi'+\lambda)=e^{\mu \psi}-C_\beta
 \end{equation}
 which can be expressed as $\psi'=G_i(e^{\mu \psi}-C_\beta)-\lambda$, i.e.
 \begin{equation}
 \label{prim}
 \frac{d\psi}{G_i(e^{\mu \psi}-C_\beta)-\lambda}=du
 \end{equation}
 for either $i=1,2$. Note that neither function $G_i$ is differentiable at the nexus point $F(\lambda)$.\\
 
\noindent 
The equation
 \begin{equation}
 \label{psi} \int_{\hat \psi_0}^{\psi}\frac{dx}{G_2(e^{\mu x}-C_\beta)-\lambda}=u-\hat u_0
 \end{equation}
 uniquely defines a function $\psi=\psi(u)$ on a neighborhood of $u=\hat u_0$ solving \eqref{ODE}. Here, we need $\hat \psi_0\ll 0$ to make sure that $e^{\mu \hat \psi_0}\in (F(\alpha), F(\lambda))$ so that $G_2(e^{\mu x}-C_\beta)$ is well-defined near $x=\hat \psi_0$. Let us define 
 \begin{equation}
 \label{psi0}
 \psi_0:=\frac 1\mu \log(F(\lambda)+C_\beta),\quad  \mbox{i.e.} \quad  e^{\mu \psi_0}-C_\beta=F(\lambda).
 \end{equation}
 Since $G_2\ge \lambda$, $\psi$ is non-decreasing. Clearly, $\psi$ reaches any value less than $\psi_0$. Moreover, we claim that  $\psi(u)$ is defined for any negative values of $u$. 
 %We claim that and that it reaches the value $\psi_0$ at a finite time $u_0$.  
 This follows from the fact that the integral
 \[ \int_{\hat \psi_0}^{-\infty}\frac{dx}{G_2(e^{\mu x}-C_\beta)-\lambda} = -\infty\]
 diverges. Indeed, $G_2(e^{\mu x}-C_\beta) \underset{x\to -\infty}{\sim} \alpha$ and $\alpha>\lambda$. In particular, one has $\psi(u)\to -\infty$ when $u\to -\infty$. One can actually say more. Indeed, $G_2(-C_\beta+s) = \alpha+\frac{s}{F'(\alpha)}+O(s^2)$ when $s\to 0^+$. Equivalently, one finds $G_2(e^{\mu x}-C_\beta) -\lambda= (\alpha-\lambda)+\frac{e^{\mu x}}{F'(\alpha)}+O(e^{2\mu x})$ when $x\to -\infty$. Expanding the integral defining $\psi$, one finds near $u=-\infty$:
 \[\psi(u)= (\alpha-\lambda)(u-\hat u_0)+\hat \psi_0+\frac{\alpha-\lambda}{F'(\alpha)}e^{\mu \psi}+O(e^{2\mu \psi}).\] 
Writing $\alpha-\lambda = \frac{\beta}{\mu}$, one infers that 
\[\phi(u)= -\alpha u + u_{\infty}-\frac{\beta}{\mu F'(\alpha)} e^{\beta u}+O(e^{2\beta u})\]
when $u\to -\infty$, for some $u_{\infty}\in \mathbb R$. Since $F'(\alpha)=-\beta\alpha^{n-1}<0$, $\phi$ satisfies the requirements \eqref{convex} at least near $-\infty$ and $dd^c \phi(u)$ has indeed a cone singularity of angle $2\pi \beta$ along $D=(u=-\infty)$. Moreover, thanks to \eqref{MA2p}, we obtain the following expansions for $\phi'$ and $\phi''$: 
\begin{equation}
\label{exp D}
\phi'(u)= -\alpha +c_1e^{\beta u}+O(e^{2\beta u}), \quad \phi''(u)=  c_2 e^{\beta u}+O(e^{2\beta u}),
\end{equation}
where $c_1=-\frac{\beta^2}{\mu F'(\alpha)}$ and $c_2=\beta c_1$. \\

\subsection{Extension of $\psi$ past $\psi_0$.}

We are going to show that $\psi$ can be smoothly extended past the value $\psi_0$ defined in \eqref{psi0}. First, we claim that the integral
%From a heuristic point of view, the apparent singularity at $\lambda$ comes from the fact that we have multiplied \eqref{MA} by $\psi'$ which vanishes precisely at $u=u_0$ solving  \eqref{u0}. Now, one needs to check that 
 \[\int_{\hat \psi_0}^{\psi_0}\frac{dx}{G_2(e^{\mu x}-C_\beta)-\lambda}\]
is convergent. Indeed, perform the change of variable $s:=e^{\mu x}-C_\beta$ and set $\hat s_0:=e^{\mu \hat \psi_0}-C_\beta$ so that our integral becomes 
\[\frac 1 \mu \int_{\hat s_0}^{F(\lambda)}\frac{ds}{(G_2(s)-\lambda)(s+C_\beta)}=\frac 1 \mu \int_\lambda^{G(\hat s_0)}\frac{-F'(t)dt}{(t-\lambda)(F(t)+C_\beta)}\]
where we have used another change of variable $t=G(s)$ to obtain the RHS. Now an easy computation shows that $F'(t)=\mu t^{n-1}(\lambda-t)$ and the integral becomes $\int_\lambda^{G(\hat s_0)}\frac{t^{n-1} dt}{F(t)+C_\beta}$ which is obviously convergent. 
%
%Indeed, let us write $F(t)= F(\lambda)-c(t-\lambda)^2+o((t-\lambda)^2)$ when $t\to \lambda$, for $c=\frac{\mu(n-1)\lambda^{n-2}}{2}$. This implies that 
%\begin{equation}
%\label{G2}
%G_2(s)-\lambda \sim c' \sqrt{F(\lambda)-s} \quad \mbox{when}  \quad s\to F(\lambda)^-,
%\end{equation}
%where $c'=c^{-1/2}$. Setting $s:=e^{\mu x}-C_\beta$ and $\hat s_0:=e^{\mu \hat \psi_0}-C_\beta$, our integral becomes 
%\[\frac 1 \mu \int_{\hat s_0}^{F(\lambda)}\frac{ds}{(G_2(s)-\lambda)(s+C_\beta)}\]
%which is then clearly convergent from \eqref{G2}. 
Now, the identity
 \begin{equation}
 \label{psi2} 
 \int_{\psi_0}^{\psi}\frac{dx}{G_2(e^{\mu x}-C_\beta)-\lambda}=u-u_0
 \end{equation}
 shows that $\psi$ is defined on $(-\infty,u_0)$ and satisfies $\psi(u)\to \psi_0$ when $u\to u_0$.  We set 
\begin{equation}
\label{u0}
u_0:=\hat u_0+\int_{\hat \psi_0}^{\psi_0}\frac{dx}{G_2(e^{\mu x}-C_\beta)-\lambda}=\hat u_0+ \int_\lambda^{G(\hat s_0)}\frac{t^{n-1} dt}{F(t)+C_\beta}.
\end{equation}
By the same arguments, one see that the integral
\[ \int_{\psi_0}^{\psi}\frac{dx}{G_1(e^{\mu x}-C_\beta)-\lambda}\]
is convergent, hence  
 \begin{equation}
 \label{psi3} 
 \int_{\psi_0}^{\psi}\frac{dx}{G_1(e^{\mu x}-C_\beta)-\lambda}=u-u_0
 \end{equation}
 defines an extension of $\psi$ for $u>u_0$ close enough to $u_0$ which is continuous across $u_0$ and satisfies 
 \[\psi(u)\underset{u\to u_0}{\longrightarrow} \psi_0, \quad  \psi'(u)\underset{u\to u_0}{\longrightarrow} 0.\]
  Let us now show that $\psi$ is smooth at $u_0$. Differentiating \eqref{ODE} we get  $\psi''F'(\psi'+\lambda)=\mu \psi' e^{\mu \psi}$ away from $u_0$. Since $F'(t)=-\mu t^{n-1}(t-\lambda)$, this implies that 
 \begin{equation}
 \label{psi''}
 \psi''=-\frac{e^{\mu \psi}}{(\psi'+\lambda)^{n-1}}
 \end{equation}
  which converges to the value $\frac{e^{\mu \psi_0}}{\lambda^{n-1}}$ when $u\to u_0$. In particular, $\psi$ is $C^2$ near $u_0$. We get smoothness iteratively thanks to \eqref{psi''}. \\
  
   Finally, since $G_1\le \lambda$, $\psi$ is non-increasing after $u_0$. The expression \eqref{psi3} yields a function $\psi$ satisfying $\psi\ge \psi_\beta$ for $\psi_\beta\in \mathbb R$ which is a solution of 
 \[e^{\mu \psi_\beta}=C_\beta.\] 
 We refer to Figure~\ref{graph} below for the qualitative behavior of $\psi$. \\
  
  We can actually provide an integral expression (or relation) for $\psi$ which does not involve $u_0$. Indeed, from \eqref{u0}, one get
  \begin{eqnarray*}
  \lambda (u_0-\hat u_0)&=&\hat \psi_0-\psi_0+\int_{\hat \psi_0}^{\psi_0}\frac{G_2(e^{\mu x}-C_\beta)dx}{G_2(e^{\mu x}-C_\beta)-\lambda}\\
  &=&\hat \psi_0-\psi_0+\int_\lambda^{G_2(\hat s_0)}\frac{t^{n} dt}{F(t)+C_\beta}
 \end{eqnarray*}
 and similarly, one gets from \eqref{psi3}
 \[\lambda(u-u_0)=\psi_0-\psi+\int_{G_1(s(\psi))}^\lambda \frac{t^n dt}{F(t)+C_\beta}.\]
 Adding the latter identities, we get
 \begin{equation}
 \label{forme integrale}
 \phi(\hat u_0) -\phi(u)=\int_{G_1(s(\psi))}^{G_2(\hat s_0)}\frac{t^{n} dt}{F(t)+C_\beta}
 \end{equation}
 where $s(\psi)=e^{\mu \psi}-C_\beta$. Recall that by construction, we have $\phi(\hat u_0)=-\hat \psi_0-\lambda \hat u_0$, and that $\hat \psi_0$ is constrained to $\hat s_0=s(\hat \psi_0)\in (F(\alpha), F(\lambda))$ where $s(x)=e^{\mu\hat x}-C_\beta$. We fix such a $\hat \psi_0$ once and for all. At this point, $\hat u_0$ is arbitrary, but from now on we will choose $\hat u_0$ so that 
 \[ \hat \psi_0+\lambda \hat u_0+\int_{0}^{G_2(\hat s_0)}\frac{t^{n} dt}{F(t)+C_\beta}=0.\]
As a result, we obtain from \eqref{forme integrale} the relation
 \begin{equation}
 \label{phi I}
 \phi(u)=\int_0^{G_1(s(\psi))} \frac{t^{n} dt}{F(t)+C_\beta}.
 \end{equation}

\subsection{Long time existence of $\psi$.}

There are two cases to consider depending on the sign of $C_\beta$. \\

$\bullet$ If $C_\beta>0$ (i.e. $\beta>\beta_*$), the integral 
\[\int_{\psi_0}^{\psi_\beta}\frac{dx}{G_1(e^{\mu x}-C_\beta)-\lambda}=\int_0^\lambda \frac{t^{n-1}dt}{F(t)+C_\beta}\] 
is convergent. In other words, there exists a finite time $u_\beta$ such that $\psi(u_\beta)=\psi_\beta$. In particular, $dd^c\phi$ cannot be extended as a metric past $u_\beta$. Moreover, the previous integral goes to $+\infty$ as $\beta \to \beta_*$ which shows that $u_\beta\to+\infty$ when $\beta \to \beta_*$.  \\
 
$\bullet$  If $C_\beta=0$ (i.e. $\beta=\beta_*$), then $\psi$ is defined for all times and $\psi(u)\underset{u\to +\infty}{\longrightarrow} -\infty$ since 
\[\int_{\psi_0}^{-\infty}\frac{dx}{G_1(e^{\mu x})-\lambda}=\int_0^\lambda \frac{dt}{t(b-at)}=+\infty\]
is divergent and $\int_{\psi_0}^{\psi_2}\frac{dx}{G_1(e^{\mu x})-\lambda}<+\infty$ is finite for any finite $\psi_2<\psi_0$. One can easily get from \eqref{forme integrale} the successive asymptotics $\psi(u)= -\lambda u + C_1 + o(1)$ and then $\psi(u)=  -\lambda u + C_1-C_2e^{-\frac{\lambda\mu}{n} u }+O(e^{-\frac{2\lambda \mu}{n} u })$ when $u\to +\infty$,  for some $C_2>0$. In terms of $\phi$, we get
 \begin{equation}
 \label{asymptotic TY}
 \phi(u)=   -C_1+C_2e^{-\beta_* u }+O(e^{-2\beta_* u}),
 \end{equation}
where we have used that $\frac{\lambda\mu}{n}=\beta_*$. 
%This shows that $dd^c\phi$ collapses the section at infinity in $N_D$ and is conical there. Indeed, the flat cone metric on the blowdown of the zero section of $N_D^{-1}=\frac{1}{\alpha-1}K_D$ has radius $r=\|\cdot\|^{\frac{\alpha-1}{n}}$ where the background hermitian metric on $N_D^{-1}$ is the one induced by the KE metric on $D$. With our notations, the blowdown divisor is $(v^{-1}=0)$ hence $r=\|v\|^{-\frac{\alpha-1}{n}}$. All in all, the cone metric is $dd^c r^2=e^{-\beta_*u}$, in accordance with \eqref{asymptotic TY}.  \\

One can actually recover the qualitative behavior of $\phi_{\beta_*}$ from an explicit formula. Indeed, $\phi:=\phi_{\beta_*}$ is a solution of $\phi''=-\phi'(1+\frac{\phi'}{\alpha}) \beta_*$ which can be integrated in 
\begin{eqnarray}
\label{phi beta*}
\phi_{\beta_*}(u)&=c_0-\alpha u +\frac{\alpha}{\beta_*}\log(1+c_1e^{\beta_* u})\\
&=c_2+\frac{\alpha}{\beta_*}\log(1+c_1^{-1}e^{-\beta_*u}) \nonumber
\end{eqnarray}
where $c_0,c_2\in \mathbb R$ and $c_1>0$ are constant.

\subsection{Analysis of the case $\beta<\beta_*$}

\label{sec beta petit}

Again, we fix the initial values $(\hat u_0, \hat \psi_0)$ such that $e^{\mu \hat \psi_0}\in (F(\alpha), F(\lambda))$. Under the assumption that $\beta<\beta_*$, we have $C_\beta<0$ hence the graph of $F$ has the shape below. 

 \begin{center}
 \begin{tikzpicture}
 \tkzInit[xmax=2.5,ystep=0.5,ymax=1]
  \draw[->] (-0.1, 0) -- (5, 0) node[right] {\small $t$};
  \draw[->] (0, -1) -- (0, 1) node[above] {$F(t)$};
  \draw[dotted] (3, 0) -- (3, 0.68) node[above] {};
   \draw[dotted] (3.6, 0) -- (3.6, 0.5) node[above] {};
   \draw[dotted] (0, 0.45) -- (3.6, 0.45) node[above] {};
   \draw[dotted] (2.15, 0.45) -- (2.15, 0) node[above] {};

  \node[align=left] at (3,-0.3) {\small $\lambda$};
   \node[align=left] at (3.6,-0.3) {\small $\alpha$};
   \node[align=left] at (2.15,-0.3) {\small $\gamma$};
    \node[align=left] at (-1,0.45) {\small $F(\alpha)=-C_\beta$};

    \draw (3.6, 0.1) -- (3.6, -0.1) node[above] {};
  \draw (3, 0.1) -- (3, -0.1) node[above] {};

  \draw[scale=2, domain=-0.1:1.8, smooth, variable=\x, blue] plot ({\x}, {2*\x*\x*\x*(0.2-0.1*\x)});
\end{tikzpicture}
\end{center}
Here we have set $\gamma:=G_1(F(\alpha))=G_1(-C_\beta)$. We can construct a solution $\psi$ near $u_0$ solving $\psi(\hat u_0)=\hat \psi_0$. 

\begin{claim}
The solution $\psi=\psi(u)$ is defined for all $u\in (-\infty, +\infty)$. 
\end{claim}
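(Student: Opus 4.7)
The strategy is to extend the locally-defined solution $\psi$ in both directions, using the two integral representations \eqref{psi} and \eqref{psi3}, and to show that in each direction the ODE velocity stays bounded so that $\psi$ does not blow up in finite time.

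\emph{Backward extension.} The argument is essentially the same as in the case $\beta>\beta_*$: on the branch where $\psi'+\lambda\in[\lambda,\alpha]$, one writes $\psi'=G_2(e^{\mu\psi}-C_\beta)-\lambda$. Since $G_2:[-C_\beta,F(\lambda)]\to[\lambda,\alpha]$ is continuous and decreasing, for $\psi\le\hat\psi_0$ we have $\psi'\in[0,\alpha-\lambda]$, so the velocity is uniformly bounded. Moreover the integrand of $\int_{\hat\psi_0}^{\psi}\frac{dx}{G_2(e^{\mu x}-C_\beta)-\lambda}$ tends to the positive constant $\frac{1}{\alpha-\lambda}$ as $x\to-\infty$, so the integral diverges to $-\infty$. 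Hence $\psi$ extends to all $u\in(-\infty,\hat u_0]$ with $\psi(u)\sim(\alpha-\lambda)u$ as $u\to-\infty$.

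\emph{Reaching $u_0$ and switching branches.} The same computation as in section~3.2 shows that $\psi$ reaches the value $\psi_0=\frac{1}{\mu}\log(F(\lambda)+C_\beta)$ in finite time $u_0>\hat u_0$ (this requires $F(\lambda)+C_\beta>0$, which is built into the choice of $(\hat u_0,\hat\psi_0)$ with $\hat s_0\in(F(\alpha),F(\lambda))$). Past $u_0$ one switches to the other branch $\psi'+\lambda\in[\gamma,\lambda]$, which leads to the representation \eqref{psi3} with $G_1$ replacing $G_2$; the same smoothness argument as in the $\beta>\beta_*$ case (using the differentiated ODE $\psi''=-e^{\mu\psi}/(\psi'+\lambda)^{n-1}$) shows that $\psi$ is actually smooth across $u_0$, with $\psi'(u_0)=0$.

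\emph{Forward extension.} This is the step where the hypothesis $\beta<\beta_*$ plays a decisive role. Because $C_\beta<0$, the quantity $e^{\mu\psi}-C_\beta$ is bounded below by $-C_\beta=F(\alpha)=F(\gamma)>0$ for every value of $\psi\in\R$, so $G_1(e^{\mu\psi}-C_\beta)$ never reaches the singular value $0$ of $G_1^{-1}$; equivalently, $\psi'+\lambda$ stays in $[\gamma,\lambda]\subset(0,\lambda]$. In particular, for $u>u_0$ one has the \textbf{uniform two-sided bound}
\[
\gamma-\lambda\ \le\ \psi'(u)\ =\ G_1\!\big(e^{\mu\psi(u)}-C_\beta\big)-\lambda\ \le\ 0,
\]
with equality on the left only in the limit $\psi\to-\infty$. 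The right-hand side of the ODE is therefore a bounded continuous function of $\psi$, so $\psi$ cannot blow up in finite forward time. Equivalently, as $x\to-\infty$ the integrand of \eqref{psi3} converges to the finite negative constant $\frac{1}{\gamma-\lambda}$, so
\[
\int_{\psi_0}^{-\infty}\frac{dx}{G_1(e^{\mu x}-C_\beta)-\lambda}\ =\ +\infty,
\]
which means $u\to+\infty$ as $\psi\to-\infty$. Combining this with the backward extension proves that $\psi$ is defined on $(-\infty,+\infty)$, with linear asymptotics $\psi(u)\sim(\gamma-\lambda)u$ at $+\infty$ and $\psi(u)\sim(\alpha-\lambda)u$ at $-\infty$.

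\emph{Main obstacle.} The only genuinely new ingredient, compared with the case $\beta>\beta_*$ treated in section~3.4, is the forward-existence step: one must exploit that $C_\beta<0$ keeps $e^{\mu\psi}-C_\beta$ uniformly away from the zero of $F$ at the origin. This is exactly what prevents the integral $\int_{\psi_0}^{\psi_\beta}\frac{dx}{G_1(e^{\mu x}-C_\beta)-\lambda}$ that produced the finite lifespan $u_\beta$ in the $\beta>\beta_*$ case from being even definable here (there is no $\psi_\beta$), and it is the heart of the claim.
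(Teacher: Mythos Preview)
Your proof is correct and follows essentially the same approach as the paper: backward existence via divergence of the $G_2$-integral (since $G_2(e^{\mu x}-C_\beta)\to\alpha$), smooth extension across $u_0$ as in the $\beta>\beta_*$ case, and forward existence via divergence of the $G_1$-integral (since $G_1(e^{\mu x}-C_\beta)\to\gamma\in(0,\lambda)$). Your write-up is more detailed than the paper's, and your emphasis on $C_\beta<0$ forcing $e^{\mu\psi}-C_\beta\ge -C_\beta>0$ (hence $\psi'+\lambda\ge\gamma>0$) is exactly the mechanism the paper uses, stated more explicitly.
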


\begin{proof}
The existence of $\psi(u)$ for $u$ near $-\infty$ is justified by the divergence of the integral $\int_{-\infty}^{\hat \psi_0} \frac{dx}{G_2(e^{\mu x}-C_\beta)-\lambda}$ which is clear since $G_2(e^{\mu x}-C_\beta)\to \alpha$ when $x\to -\infty$. As in the case $\beta>\beta_0$, one see that there is a value $u_0$ such that $\psi(u)\to \psi_0$ when $u\nearrow u_0$ where $e^{\mu \psi_0}-C_\beta=F(\lambda)$, and that $\psi$ can be extended smoothly past $u_0$. 
The existence of $\psi(u)$ for $u$ near $+\infty$ is justified by the divergence of the integral $\int_{-\infty}^{\psi_0} \frac{dx}{G_1(e^{\mu x}-C_\beta)-\lambda}$ which in turn is a consequence of $G_1(e^{\mu x}-C_\beta)\to \gamma \in (0,\lambda)$ when $x\to -\infty$.
\end{proof}

\begin{claim}
The KE metric $\omega_\beta=dd^c \phi_\beta$ extends to a metric on the compactification $\mathbb P(L\oplus \mathcal O_D)$ with cone singularities along both $D$ and $D_\infty$. The cone angle along $D$ is $2\pi \beta$ and the cone angle along $D_\infty$ is $2\pi \mu (\lambda-\gamma)$ which ranges in $(0, 2\pi(\alpha-1))$ as $\beta$ ranges in $(0,\beta_*)$. 
\end{claim}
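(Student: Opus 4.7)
Since the behavior at $u=-\infty$ is governed by the expansions in \eqref{exp D} which were derived using only the existence of $\psi$ near $-\infty$ (the sign of $C_\beta$ played no role there), the cone singularity along $D$ with angle $2\pi\beta$ follows verbatim in the small-$\beta$ regime. All the new work therefore concerns what happens as $u\to +\infty$, which on $\mathbb P(L\oplus\mathcal O_D)$ corresponds to the divisor at infinity $D_\infty$. The plan is: (i) extract precise asymptotics of $\psi$ at $+\infty$ from the autonomous ODE \eqref{ODE}; (ii) rewrite the Calabi ansatz in the dual fibre coordinate $s=1/v$; (iii) recognise the flat cone model; (iv) compute the range of the resulting cone angle as $\beta$ varies.

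The first step exploits the shape of $F$. Since $\gamma:=G_1(-C_\beta)\in(0,\lambda)$ satisfies $F(\gamma)+C_\beta=0$ and $F'(\gamma)=\mu\gamma^{n-1}(\lambda-\gamma)>0$, Taylor-inverting the $G_1$-branch of the ODE gives
\[\psi'(u)+\lambda \;=\; \gamma \;+\; \frac{e^{\mu\psi(u)}}{F'(\gamma)} \;+\; O\bigl(e^{2\mu\psi(u)}\bigr)\]
as $u\to+\infty$. Integrating yields $\psi(u)=(\gamma-\lambda)u+c_\infty+o(1)$ for some real $c_\infty$, hence $e^{\mu\psi(u)}=A\,e^{\mu(\gamma-\lambda)u}(1+o(1))$ with $A:=e^{\mu c_\infty}>0$. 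Combined with \eqref{psi''}, one reads off $\phi'(u)\to -\gamma$ and $\phi''(u)\sim A\gamma^{-(n-1)}e^{\mu(\gamma-\lambda)u}$.

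The second step is a local change of coordinates. Setting $u':=-u=\log|s|^2_{h^{-1}}$ and $\tilde\phi(u'):=\phi(-u')$, the identity $dd^cu'=-dd^cu=\tfrac{1}{\alpha-1}\omega_D$ allows one to rewrite
\[\omega_\beta = \tilde\phi''(u')\,du'\wedge d^cu' + \frac{\tilde\phi'(u')}{\alpha-1}\omega_D.\]
From Step 1, as $u'\to-\infty$ one has $\tilde\phi'(u')\to \gamma>0$ (so the ``base'' contribution $\tfrac{\gamma}{\alpha-1}\omega_D$ really is a positive $(1,1)$-form on $D_\infty\cong D$), while $\tilde\phi''(u')\sim A\gamma^{-(n-1)}e^{\beta' u'}$ with $\beta':=\mu(\lambda-\gamma)>0$. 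This is precisely the asymptotic profile of the model conic Calabi ansatz on the normal bundle $L^{-1}$ of $D_\infty$ with cone angle $2\pi\beta'$, giving a quasi-isometry to the flat cone $\frac{i\,ds\wedge d\bar s}{|s|^{2(1-\beta')}}$ plus a smooth background metric. This establishes the extension of $\omega_\beta$ across $D_\infty$ as a conic Kähler metric of angle $2\pi\beta'$.

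Finally, to pin down the range: since $-C_\beta=\frac{\alpha^n}{n+1}(\beta_*-\beta)\in(0,F(\lambda)]$ for $\beta\in(0,\beta_*)$, the two boundary limits are explicit. As $\beta\to\beta_*^-$, $C_\beta\to 0$ and $\gamma\to G_1(0)=0$, so $\beta'\to\mu\lambda=\alpha-1$; as $\beta\to 0^+$, substituting $\mu(0)=(\alpha-1)/\alpha$ and $\lambda(0)=\alpha$ one checks $-C_0=\frac{(\alpha-1)\alpha^n}{n(n+1)}=F(\lambda(0))$, whence $\gamma\to\lambda$ and $\beta'\to 0$. Continuity of $\beta\mapsto\beta'$ sweeps out the open interval, so the cone angle $2\pi\beta'$ takes every value in $(0,2\pi(\alpha-1))$. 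The one step that requires care is the conic matching of Step 3: the asymptotic in Step 1 must be refined enough to ensure a genuine quasi-isometry with the flat cone and to produce smooth remainder terms along $D_\infty$. However this is entirely parallel to the analysis already carried out at $D$ via \eqref{exp D}, and introduces no conceptually new difficulty.
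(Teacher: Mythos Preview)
Your proof is correct and follows essentially the same approach as the paper: expand the inverse branch $G_1$ around $-C_\beta$ to obtain $\psi'+\lambda=\gamma+O(e^{\mu\psi})$, integrate to get $\psi(u)=(\gamma-\lambda)u+c_\infty+o(1)$, and read off the cone structure from the resulting expansion of $\phi$. The paper does the same (writing $G_1(e^{\mu x}-C_\beta)=\gamma+ce^{\mu x}+O(e^{2\mu x})$ and deducing $\phi(u)=-\gamma u+\mathrm{cst}+c''e^{-\mu(\lambda-\gamma)u}+\ldots$), though it also re-derives the behavior at $D$ rather than quoting \eqref{exp D}.

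Two minor remarks. First, in your rewritten form of $\omega_\beta$ the base term should carry the factor $2$ from \eqref{omb}, i.e.\ $\tfrac{2\tilde\phi'(u')}{\alpha-1}\omega_D$; this is cosmetic and does not affect the argument. Second, your verification of the range $(0,2\pi(\alpha-1))$ is a genuine addition: the paper's proof only establishes the formula $2\pi\mu(\lambda-\gamma)$ and leaves the range to the reader, whereas you check both endpoints explicitly (using $\gamma\to0$ as $\beta\to\beta_*^-$ and $-C_0=F(\lambda(0))$ forcing $\gamma\to\lambda$ as $\beta\to0^+$) and invoke continuity. Containment in the open interval is immediate from $0<\gamma<\lambda$ and $\mu\lambda=\alpha-1$.
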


\begin{proof}
Let us start with the behavior near $u=-\infty$. One writes $G_2(e^{\mu x}-C_\beta)=\alpha-ce^{\mu x}+O(e^{2\mu x})$ with $c>0$. Plugging that in the identity  $\int_{\psi_0}^{\psi}\frac{dx}{G_2(e^{\mu x}-C_\beta)-\lambda}=u-u_0$, we infer iteratively $\psi-\psi_0 \approx (\alpha-\lambda)(u-u_0)$ and then $\psi-\psi_0=(\alpha-\lambda)(u-u_0)-c' e^{\beta(u-u_0)}+O(e^{2\beta u})$ when $u\to -\infty$, where we have used $\alpha-\lambda=\frac{\beta}{\mu}$. This shows that $\phi(u)-\phi(u_0)=-\alpha(u-u_0)+c''e^{\beta u}+O(e^{2\beta u})$ with $c''>0$, which holds for the derivatives of $\phi$ as well thanks to the MA equation that the latter solves. 

Near $u=+\infty$, the analysis is similar but we use instead $G_1(e^{\mu x}-C_\beta)=\gamma+ce^{\mu x}+O(e^{2\mu x})$ for some other $c>0$. Then $\psi -\psi_0 \simeq (\gamma-\lambda)(u-u_0)$ and $\phi(u)-\phi(u_0)=-\gamma(u-u_0)+c''e^{-\mu(\lambda-\gamma) u}+O(e^{-2\mu(\lambda-\gamma) u})$ hence the result.
\end{proof}

\subsubsection*{Degenerating $\beta$ to zero} In the remaining part of this section, we will explain how $\omega_\beta$ degenerates when $\beta\to0$. 

We fix $\hat u_0=0$ and choose any $\hat \psi_0 <\psi_0$, which yields a value of $u_0$ by \eqref{u0}. Replacing $\psi$ by $\psi(\cdot +u_0)$, one can assume wlog that $u_0=0$.  Since $\alpha-\lambda=\frac{\beta}{\mu}\to 0$, the values $\gamma$ and $\lambda$ converge to $\alpha$. Moreover, one can check that
\begin{equation}
\label{beta to zero}
e^{\mu \psi_0}=F(\lambda)+C_\beta \sim \frac{\alpha^{n}}{2(\alpha-1)}\cdot \beta^2.
\end{equation}
When coupled with the identity $F(t)=F(\lambda)+\frac{(\lambda-t)^2}{2}\cdot F''(\lambda)+O((\lambda-t)^3)$ evaluated at $t=\gamma$, we obtain 
\begin{equation}
\label{gamma beta}
\lambda-\gamma \sim \frac{\alpha}{\alpha-1} \cdot \beta 
\end{equation}
since $F''(\lambda)=-\mu \lambda^{n-1}\approx-\mu \alpha^{n-1}$. \\

\noindent
$\bullet$ \emph{Behavior near $D$.} Let us first focus on what happens near $D$. For $t\in (\lambda, \alpha)$, one writes
 $F(t)-F(\alpha)=\alpha^{n-1}(\alpha-t)(\beta-\frac \mu 2 (\alpha-t)+O(\beta^2))$. Neglecting the $O(\beta^2)$ term, one can solve for $t$ and find
 $t-\lambda= \frac{\beta}{\mu}\sqrt{1-\frac{2\mu}{\alpha^{n-1}}\frac{F(t)-F(\alpha)}{\beta^2}}$. Setting $t:=G_2(e^{\mu x}-C_\beta)$ and using \eqref{beta to zero} this yields $G_2(e^{\mu x}-C_\beta)-\lambda \approx \frac \beta \mu \sqrt{1-e^{\mu (x-\psi_0)}}$ hence
 \begin{eqnarray*}
 -\frac{\beta}{\mu}u&=&\int_{\psi-\psi_0}^0 \frac{dx}{\sqrt{1-e^{\mu x}}}\\
 &=&-(\psi-\psi_0)+c-\frac 1{2\mu}e^{\mu(\psi-\psi_0)}+O(e^{2\mu(\psi-\psi_0)})
 \end{eqnarray*}
 where $c>0$ is a positive harmless constant (it might depend on $\beta$ but does not blow up when $\beta\to 0$). In terms of $\phi$, we have
 \[\phi_\beta(u)=-\psi_0-c-\alpha u+\frac 1{2\mu}e^{\beta u}+O(e^{2\beta u}).\]
In particular, if one sets $r:=\sqrt{\frac 2 \mu}e^{\frac 12 \beta u}\in (0,\sqrt{2\mu^{-1}})$, one sees that near $D$, the Riemannian metric $g_\beta$ associated to $\omega_\beta$ is asymptotic (uniformly in $\beta$) to the Riemannian metric $dr^2+\beta^2\eta^2+\frac{2\alpha}{\alpha-1}g_D$ which collapses the circles. Here, $g_D$ is the Riemannian metric associated to $\omega_D$.  \\

\noindent
$\bullet$ \emph{Behavior near $D_\infty$.} Let us now discuss what happens near $D_\infty$. Since $F'(t)=-\mu t^{n-1}(t-\lambda)$, we derive from \eqref{gamma beta} that $F'(\gamma) \sim   \frac{\mu \alpha^{n}}{\alpha-1}  \beta$ and $F''(\gamma) \approx-\mu \alpha^{n-1}$. So for $t\in (\gamma, \lambda)$, one has $F(t)-F(\gamma)=\mu\alpha^{n-1}(t-\gamma)( \frac{\alpha}{\alpha-1} \beta-\frac{1}{2} (t-\gamma)+O(\beta^2))$. Neglecting the $O(\beta^2)$ term, one can complete the square and find the relation $t-\lambda=-\frac {\alpha}{\alpha-1} \beta \sqrt{1-\frac{2(\alpha-1)^2}{\mu \alpha^{n+1}\beta^2}e^{\mu x}}$. Setting $t=G_1(e^{\mu x}-C_\beta)$ and using \eqref{beta to zero}, this yields
$G_1(e^{\mu x}-C_\beta)-\lambda \approx-\frac {\alpha}{\alpha-1} \beta \sqrt{1-e^{\mu (x-\psi_0)}}$ hence 
 \begin{eqnarray*}
 \frac {\alpha \beta}{ \alpha-1} u&=&\int_{\psi-\psi_0}^0 \frac{dx}{\sqrt{1-e^{\mu x}}}\\
 &=&-(\psi-\psi_0)+c-\frac{1}{2\mu}e^{\mu(\psi-\psi_0)}+O(e^{2\mu(\psi-\psi_0)})
 \end{eqnarray*}
 hence
  \[\phi_\beta(u)=-\psi_0-c-\gamma u+\frac{1}{2\mu}e^{-\frac{\mu \alpha}{\alpha-1}\beta u}+O(e^{-2\frac{\mu \alpha}{\alpha-1}\beta u})\]
  and we conclude similarly to earlier for the case of $D$. The actual expansion has additionnal corrective terms, e.g. the cone singularity has angle $2\pi \mu (\lambda-\gamma)$ which differs from $2\pi\mu\frac{ \alpha}{\alpha-1}\beta$ by a $O(\beta^2)$. \\
  
From the analysis above, one sees that $\omega_\beta$ collapses to $\frac{2\alpha}{\alpha-1}\omega_D$ on the interior of $L\setminus D = \mathbb P(L\oplus \mathcal O_{D})\setminus (D\cup D_\infty)$. In conclusion the Gromov-Hausdorff limit of $(\mathbb P(L\oplus \mathcal O_{D}), g_\beta)$ is the union of two copies of $\Big([0,1]\times D, \frac{2\alpha}{\alpha-1}(dr^2+g_D)\Big)$ glued along $(r=1)\simeq D$ or, equivalently,  $\Big([0,2]\times D, \frac{2\alpha}{\alpha-1}(dr^2+g_D)\Big)$.

\subsection{Asymptotic expansion of $\phi$ near $+\infty$.}

 Now we want to investigate the behavior of $\phi(u)=\phi_\beta(u)$ both as $\beta$ decreases to $\beta_*$ and when $u\to +\infty$. 
%Given the previous computations, the extinction time $u_\beta$ of $\psi_\beta$ is when the latter reaches the value $\frac 1\mu \log C_\beta$, which happens when $u_\beta \simeq -\frac 1{\lambda\mu} \log C_\beta=-\frac 1{\alpha-1} \log C_\beta \simeq -\frac 1{\alpha-1} \log(\beta-\beta_*)$. 
 
First, we know that $\psi\to -\infty$ so that $s(\psi)\to 0$. Since $F(t)\sim bt^n$ when $t\to 0$ and $\frac{1}{F(t)+C_\beta}\le \frac 1{F(t)}$, \eqref{phi I} tells us that  $0\le \phi(u)\le G_1(s(\psi))(b^{-1}+o(1))$ which tends to zero. Recall that  $G(s)\sim b^{-\frac 1n } s^{\frac 1n}$ and $s(\psi) \le e^{\mu \psi}=e^{-n\beta_* u -\mu \phi}\le e^{-n\beta_* u}$ since $\phi$ is non-negative. In short, we get   
\begin{equation}
\label{psi5}
\phi(u)=O(e^{-\beta_* u}), 
\end{equation}
where the $O(\cdot)$ is uniform in $\beta$, when $u\to +\infty$.

Since $\psi(u)=-\lambda u - \phi(u)$, the estimate \eqref{psi5} above is already enough to see that the extinction time $u_\beta$ of $\psi_\beta$ (i.e. when the latter reaches the value $\frac 1\mu \log C_\beta$)  happens when $u_\beta \simeq -\frac 1{\lambda\mu} \log C_\beta$, or more precisely 
\begin{equation}
\label{ubeta}
u_\beta  = -\frac 1{n\beta_*} \log C_\beta +O(C_\beta^{\frac 1n}).
\end{equation} 

\noindent
Therefore we have the following picture

  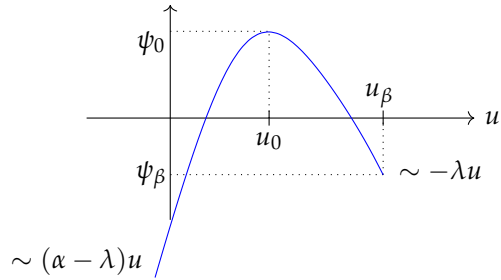
\begin{figure}[h]
   \centering
 \begin{tikzpicture}
 \tkzInit[xmax=2.5,ystep=0.5,ymax=1]
  \draw[->] (-0.1, 0) -- (5, 0) node[right] {\small $u$};
  \draw[->] (1, -1.35) -- (1, 1.5) node[above] {};
  \draw[dotted] (2.3, 0) -- (2.3, 1.15) node[above] {};
   \draw[dotted] (1, 1.15) -- (2.3, 1.15) node[above] {};
   \draw[dotted] (3.8, -0.75) -- (1, -0.75) node[above] {};
   \draw[dotted] (3.8, -0.75) -- (3.8, 0) node[above] {};

  \node[align=left] at (2.3,-0.3) {\small $u_0$};
   \node[align=left] at (3.75,0.3) {\small $u_\beta$};

    \node[align=left] at (0.75,1) {\small $\psi_0$};
        \node[align=left] at (0.75,-0.75) {\small $\psi_\beta$};

   % \node[align=left] at (4.4,0.3) {\small $\alpha$};
 %   \draw (4.4, 0.1) -- (4.4, -0.1) node[above] {};
  \draw (2.3, 0.1) -- (2.3, -0.1) node[above] {};
    \draw (3.8, 0.1) -- (3.8, -0.1) node[above] {};

\node[text width=2cm] at (-0.1,-1.9) {\small  $\sim (\alpha-\lambda)u$};
\node[text width=2cm] at (5,-0.7) {\small  $\sim -\lambda u$};

\draw[scale=1, domain=0.8:3.8, smooth, variable=\x, blue] plot ({\x}, {e^(-(\x-2)^2)-0.8*(\x-2)^2+(\x-2) });
 % \draw[scale=2, domain=-0.1:2.6, smooth, variable=\x, blue] plot ({\x}, {e^(-(\x-1.5)^2)-0.6});
\end{tikzpicture}
 \caption{Graph of $\psi$}
  \label{graph}
\end{figure}

\bigskip

\medskip
\noindent
To refine \eqref{psi5}, we need to further analyze the integral $\phi=\int_0^{G_1(s(\psi))} \frac{t^{n} dt}{F(t)+C_\beta}$ as $s(\psi)\to 0$. Despite its simple form, it is actually slightly simpler to reverse the change of variable and work with the expression
\begin{equation}
\label{phi exp}
\phi=\frac{1}{\lambda \mu}\int_0^{s(\psi)} \frac{G(s)ds}{(1-\lambda^{-1}G(s))(s+C_\beta)}. 
\end{equation}
 
First, we claim that the function $G(s)$ has a polynomial expansion in $s^{\frac 1n}$ at any (finite) order. We prove this by induction, given that this is clear at order one. Set $\gamma(s)=G(s^n)$, which satisfies 
\begin{equation}
\label{g(s)}
\gamma(s)^n(b-a\gamma(s))=s^n.
\end{equation} Fix an integer $N\ge 0$, and assume that one can write $\gamma(s)=b^{-\frac 1n}s+s^2(P_N(s)+R_N(s))$ where $P_N$ is a polynomial of degree at most $N$ while $R_N(s)=O(s^{N+1})$. By \eqref{g(s)}, we get
\begin{eqnarray*}
b-a(P_N+R_N)=\Big(\frac{s}{\gamma(s)}\Big)^n&=&b^{\frac 1n}\Big(1+\sum_{k=1}^{N+1}(-1)^kb^{\frac kn}s^k(P_N+R_N)^k\Big)+O(s^{N+2})\\
&=&b^{\frac 1n}\Big(1+\sum_{k=1}^{N+1}(-1)^kb^{\frac kn}s^kP_N^k\Big)+O(s^{N+2})
\end{eqnarray*}
from which one deduces that $R_N(s)=c_{N+1}s^{N+1}+O(s^{N+2})$, as desired. 

The expansion of $G(s)$ induces an expansion of $\frac{G(s)}{1-\lambda^{-1}G(s)}$, which we can combine with $\frac{1}{s+C_\beta}= s^{-1}\Big(1-\frac{C_\beta}{s}+O\big(\big(\frac{C_\beta}{s}\big)^2\big)\Big)$ and \eqref{phi exp} to obtain
\begin{equation}
\label{expansion s}
\phi=P_N(s^{\frac 1n})+O(s^{\frac {N+1}n})+\frac{C_\beta}{ s^{1-\frac 1n}}\Big(c_0+O(s^{\frac 1n})+O\big(\frac{C_\beta}{s}\big)\Big)
\end{equation}
where $P_N$ is a polynomial of degree at most $N$ vanishing at $0$, $c_0=\frac{1}{(n-1)b^{1+\frac 1n}}$ while
\begin{equation}
\label{s psi}
s:=s(\psi)=e^{\mu \psi}-C_\beta.
\end{equation}
This asymptotic expansion is valid in the zone $C_\beta \ll s\ll 1$ or, equivalently by \eqref{psi5}, in 
\begin{equation}
\label{zone3}
(\beta-\beta_*)^{\frac1 n}\ll e^{-\beta_*u} \ll 1. 
\end{equation}
It will be convenient to introduce the variables 
\begin{equation}
\label{r rho}
\varrho:=  e^{-\frac 12 \beta_*u} \quad \mbox{and} \quad r=2\varrho
\end{equation}
 ($r$ will correspond to a Riemannian radius but $\varrho$ is more convenient to write expansions). We expand $\phi_\beta$ in powers of $\varrho^2$ and $\delta:=\frac{C_\beta}{\varrho^{2n}}$ (the latter is a $o(1)$ from \eqref{zone3}). Therefore, we need to expand $s$ in powers of $\varrho$ and $\ep$. 

First, one infers from \eqref{psi5} 
\begin{equation}
\label{C_0}
e^{\mu \psi}=  \varrho^{2n}(1+O(\varrho^{2n})).
\end{equation}
In particular, we find 
\begin{equation}
\label{s}
s=e^{\mu \psi}(1-C_\beta e^{-\mu \psi}) = \varrho^{2n} \left(1+O(\varrho^{2n})+\delta(-1+O(\varrho^{2n}))\right)
\end{equation}

We claim that we can improve \eqref{s} as follows. Given any order $N\ge 0$, one can find a polynomial $Q_N$ or degree at most $N$ such that 
\begin{equation}
\label{s(r)}
s^{\frac 1n}=h \varrho^2, \quad \mbox{with} \quad h(\varrho)=Q_N(\varrho)+O(\varrho^{N+1})+\delta(c_1+O(\varrho^2)+O(\delta)),
\end{equation}
for some (negative) constant $c_1$. From \eqref{s}, we have $Q_N(0)=1$. Let us now prove the claim \eqref{s(r)}. In what follows $g_\bullet$ will denote a function of the form $\mathrm{cst}+O(\varrho^2)+O(\delta)$. Note that this class of functions is stable under addition and multiplication.   We assume that the claim is known at some level $N\ge 1$ and we write $h(\varrho)=Q_N(\varrho)+R_N+\delta g_N$. We need to show that $R_N=c_{N+1}\varrho^{N+1}+O(\varrho^{N+2})$.

An easy computation shows that for any $k\in \mathbb N$, we have 
\begin{equation}
\label{hk}
h(\varrho)^k=\mathrm{polynomial}+kC_0^{\frac{k-1}{n}}R_N(\varrho)+\delta g_{h,k}.
\end{equation} 
In particular, we have $P_N(h\varrho^2)=\mathrm{polynomial}+\delta g_N+O(\varrho^{N+3})$, hence for any $k\in \mathbb N$, we have
 \begin{equation}
 \label{PN}
 \left(P_N(h\varrho^2)+O\big((h\varrho^2)^{N+1}\big)\right)^k=\mathrm{polynomial}+\delta g_{N,k}+O(\varrho^{N+3}).
 \end{equation}
 Next, we have $\frac{C_\beta}{s^{1-\frac 1n}}= \delta \varrho^2 h^{1-n}=\delta g_0$ so that for any $k\ge 1$, \eqref{expansion s} and \eqref{PN} yield $\phi_\beta(\varrho)^k=\mathrm{polynomial}+\delta g_{\phi,k} +O(\varrho^{N+3})$. Therefore, we find
 \begin{eqnarray*}
 h(\varrho)^n&=& e^{-\mu \phi}-\delta\\
 &=&e^{-\mu a_0}\Big(1+\sum_{k=1}^N \frac {(-\mu^k)}{k!} \phi^k\Big)+O(\varrho^{2N+2})-\delta\\
 &=& \mathrm{polynomial}+\delta g_{h,n}+O(\varrho^{N+3})
 \end{eqnarray*}
 and \eqref{hk} with $k=n$ yields the expected result \eqref{s(r)} (we actually get the improved result that $Q_N$ is even). It is now straightforward to infer the expansion of $\phi_\beta$ below
 \begin{eqnarray}
\label{expansion}
\phi_\beta(\varrho)&=&\sum_{k=0}^N a_{k} \varrho^{2k}+O(\varrho^{2N+2})\\
&&+a_L\frac{\beta-\beta_*}{\varrho^{2n-2}} \cdot \left(1+O(\varrho^2)+O\Big(\frac{\beta-\beta_*}{\varrho^{2n}}\Big)\right).\nonumber
\end{eqnarray}
Moreover, one can check that coefficients $a_i=a_i(\beta)$ are smooth in $\beta$ and the first ones are given by $a_0=0$, $a_1= \frac{1}{b^{1+\frac 1n}}$, $a_2=\frac{1}{2\lambda b^{1+\frac 2n}}\cdot(1+\frac 1{n+1})$, and $a_{L}=\frac{\alpha^n}{(n-1)n(n+1)b^{1+\frac 1n}}$. 
At this point, it is convenient to (slightly) rescale $L$, replacing $u$ by $u+(1+\frac 1n)\frac{\log b}{b}$. We therefore consider $\tilde \phi(u)=\phi(u-(1+\frac1N)\frac{\log b}b)$: this changes the equations (\ref{MA}) and (\ref{eq:32}) to
\begin{gather}
  \label{eq:37}
\boxed{  \tilde \phi'' (-\tilde \phi')^{n-1} = b^{n+1} e^{-\mu\tilde \phi-(\alpha-1)u} }\\
\boxed{  \omega^n = e^{-\mu\tilde \phi}b_{n,\alpha} \Xi }\label{eq:39}
\end{gather}
where now
\begin{equation}
  \label{eq:38}
  b_{n,\alpha} = \beta_*^{n+1} a_{n,\alpha}.
\end{equation}
As a result, the expansion \eqref{expansion} of $\tilde \phi$ has $a_1=1$.

We now consider that we have done the rescaling and we replace the previous $\phi$ by $\tilde \phi$, so $\phi$ is a solution of (\ref{eq:37}). In this way, $\phi_\beta$ coincides up to order $2$ in $\varrho$ with the unique potential $\phi_{\beta_*}$ defined in \eqref{phi beta*} with $c_1=1,c_2=0$, i.e. 
\begin{equation}
\label{phi beta*2}
\phi_{\beta_*}(\varrho)=\log(1+\varrho^2)=\log(1+\frac{r^2}4)=\frac{r^2}4 - \frac{r^4}{32}+ \cdots
\end{equation} 

Since the $a_i(\beta)$ are differentiable in $\beta$, the polynomial expansion of $\phi_\beta$ (i.e. the first line in the expansion above) differs from that of $\phi_{\beta_*}$ by a factor $O((\beta-\beta_*)r^4)$. Therefore, we have 

\begin{thm}
\label{thm expansion}
Given any $N>0$, the potential $\phi_\beta$ admits the following expansion, valid in the zone $(\beta-\beta_*)^{\frac 1n} \ll r^2 \ll 1$:

  \begin{equation}
\label{expansion2}
\phi_\beta(r)=\phi_{\beta_*}(r)+ a_{L}\frac{\beta-\beta_*}{r^{2n-2}} \cdot \left(1+O(r^2)+O\Big(\frac{\beta-\beta_*}{r^{2n}}\Big)\right)+O(r^{2N}), 
\end{equation}
where $\phi_{\beta_*}$ is given by \eqref{phi beta*2} and  $a_{L}>0$ is a positive constant. 
\end{thm}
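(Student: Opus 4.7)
The plan is to assemble results already laid out in the preceding discussion; no essentially new analysis is required, only careful bookkeeping of remainders. First, I would start from the expansion \eqref{expansion}, valid in the zone \eqref{zone3}, using the rescaling that normalizes $a_1 = 1$. After that rescaling, both $a_0 = 0$ and $a_1 = 1$ are independent of $\beta$, while the higher-order coefficients $a_k(\beta)$ for $k \geq 2$ depend smoothly on $\beta$ through the inductive recursions \eqref{s(r)} and \eqref{expansion s}, which determine the expansions of $G$ and of $s$.

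Next, I would Taylor-expand each $a_k(\beta)$ around $\beta_*$ to first order to obtain
\[\sum_{k=0}^N a_k(\beta)\, r^{2k} = \sum_{k=0}^N a_k(\beta_*)\, r^{2k} + O\bigl((\beta-\beta_*)\, r^4\bigr).\]
The remainder begins at $r^4$ precisely because $a_0$ and $a_1$ are $\beta$-independent. The polynomial on the right is the Taylor expansion at $r=0$ of $\phi_{\beta_*}(r) = \log(1 + r^2/4)$ given in \eqref{phi beta*2}, so it equals $\phi_{\beta_*}(r) + O(r^{2N+2})$.

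It then remains to absorb the new error $O((\beta-\beta_*) r^4)$ into the correction $a_L \frac{\beta-\beta_*}{r^{2n-2}} \cdot O(r^2)$ already present in \eqref{expansion}. Since $r \ll 1$ and $n \geq 2$, one has
\[(\beta-\beta_*)\, r^4 = \frac{\beta-\beta_*}{r^{2n-2}} \cdot r^{2n+2} = \frac{\beta-\beta_*}{r^{2n-2}} \cdot O(r^2),\]
so the error fits exactly into the form prescribed by \eqref{expansion2}; the $O(r^{2N+2})$ contribution is reabsorbed into $O(r^{2N})$ by relabeling $N$. The positivity of $a_L$ follows from its explicit value $a_L = \frac{\alpha^n}{n(n-1)(n+1) b^{1+1/n}}$ already recorded.

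The only genuinely nontrivial ingredient is the smoothness of $\beta \mapsto a_k(\beta)$; this is the main obstacle in the sense that it must be checked rather than invented. Inspecting the recursions for $h(\varrho)$ in \eqref{s(r)} and for $\phi_\beta$ in \eqref{expansion s} shows that the coefficients are rational functions of $\lambda, \mu, b, C_\beta$, each of which depends smoothly on $\beta$, and whose denominators remain bounded away from zero at $\beta = \beta_*$ (since $\lambda \to \alpha>0$ and $b = \beta_*>0$ there). Hence no argument beyond direct inspection is needed.
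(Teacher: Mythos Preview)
Your proposal is correct and follows essentially the same approach as the paper: the paper derives expansion \eqref{expansion}, observes that the $a_k(\beta)$ are smooth in $\beta$ with $a_0,a_1$ independent of $\beta$ after rescaling, and then notes in one sentence that the polynomial part therefore differs from that of $\phi_{\beta_*}$ by $O((\beta-\beta_*)r^4)$, immediately yielding the theorem. Your write-up is slightly more explicit about where the $O((\beta-\beta_*)r^4)$ error is absorbed, but the argument is the same.
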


\medskip

In the remaining part of this section, let us now explain how to get expansions similar to \eqref{expansion} for $\phi_\beta'$ and $\phi_\beta''$. Since $\varrho=e^{-\frac 12 \beta_*u}$, one has $\partial_u= -\frac {\beta_*}2 \varrho\partial_\varrho$ hence one can rewrite \eqref{MA} as $F(-\frac{\beta_*}{2} \varrho\partial_\varrho \phi)=\varrho^{2n}e^{-\mu \phi}-C_\beta$. Recall that we introduced the function $\delta=\frac{C_\beta}{\varrho^{2n}}$ which goes to zero but at a rate which is "independent" of $\varrho$. The above identity becomes
\[\varrho\partial_\varrho \phi = -\frac 2 {\beta_*}G\Big(\varrho^{2n}(e^{-\mu \phi}-\delta)\Big).\]
Since $G(s)$ has finite expansions in powers of $s^{\frac 1 n}$ near $0$ (cf paragraph around \eqref{g(s)}), any expansion of $\phi$ in powers of $\varrho$ and $\delta$ will yield an expansion of $\varrho\partial_\varrho \phi$. Moreover, the coefficients in the expansion are formally determined by the equation above. More precisely, if one combines the expansion of $G$ and the following expansion induced by \eqref{expansion} 
\begin{eqnarray*}
\varrho^2(e^{-\mu \phi}- \delta)^{\frac 1n}&=&(\sum_{k=1}^N \ast \varrho^{2k}+O(\varrho^{2N+2})) \cdot \Big(1-\frac{1}{n}\delta e^{\mu b_0}+O(\delta \varrho^2)+O(\delta^2)\Big)\\
&=&\sum_{k=1}^N \ast \varrho^{2k}+O(\varrho^{2N+2}) +\delta \varrho^2(\ast+ O( \varrho^2)+O(\delta))
\end{eqnarray*}
we get an expansion $\varrho \partial_\varrho\phi=\sum_{k=1}^N \ast \varrho^{2k}+O(\varrho^{2N+2}) +\delta \varrho^2(\ast+ O( \varrho^2)+O(\delta))$ and we recover the coefficients from \eqref{expansion} by integration, i.e. 

{\small
\begin{eqnarray}
\label{expansion2bis}
\varrho\partial_\varrho \phi_\beta(r)&=&\sum_{k=1}^N 2k \, a_{k} \varrho^{2k}+O(\varrho^{2N+2})+\\
&&+(2n-2)a_{L}\frac{\beta-\beta_*}{\varrho^{2n-2}} \cdot \left(1+O(\varrho^2)+O\Big(\frac{\beta-\beta_*}{\varrho^{2n}}\Big)\right).\nonumber
\end{eqnarray}}
Feeding this into \eqref{MA}, we obtain next
{\small
\begin{eqnarray}
\label{expansion3}
(\varrho\partial_\varrho)^2 \phi_\beta(\varrho)&=&\sum_{k=1}^N (2k)^2 \, a_{k} \varrho^{2k}+O(\varrho^{2N+2})+\\
&&+(2n-2)^2 a_{L}\frac{\beta-\beta_*}{\varrho^{2n-2}} \cdot \left(1+O(\varrho^2)+O\Big(\frac{\beta-\beta_*}{\varrho^{2n}}\Big)\right).\nonumber
\end{eqnarray}}
Combining \eqref{expansion2} and \eqref{expansion3} and coming back to the variable $r$, this leads to 
  \begin{equation}
\label{expansion4}
\omega_{\beta}=\omega_{\beta_*}+\Big(O\big(\frac{\beta-\beta_*}{r^{2n}}\big)+O(r^{2N-2}) \Big) \omega_\beta.
\end{equation}

  \subsection{Asymptotics of $g_\beta$}

In order to analyse the Riemannian metric $g_\beta$ associated to $\omb$, we introduce the variable $s:=-\phi'$ which decreases from $\alpha$ (on $D$, i.e. at $u=-\infty$) to $0$ (at the extinction time $u=u_\beta$ of the solution) corresponding to the moment map of the $S^1$ action on $L$ and write $g_\beta$ as 
\begin{align*}
g_\beta&=\phi''(du^2+4\eta^2)+\frac {2s}{\alpha-1}g_D\\
&=\frac{ds^2}{\phi''}+4\phi'' \eta^2+\frac {2s}{\alpha-1}g_D
 \end{align*}
 where $\eta$ is the connection $1$-form, satisfying $d\eta=-\frac{1}{\alpha-1} \omega_D$. Using \eqref{MA2}-\eqref{MA2p}, we see that 
 \[\phi''=\frac 1 {s^{n-1}}(s^n(b-as)+C_\beta).\]
 Given \eqref{cbeta}, we have $\phi''(\alpha)=0$; after factorizing, we get $\phi''(s)=s(\alpha-s)(a-(b-a\alpha)\frac{\alpha^n-s^n}{s^n(\alpha-s)})$. Using $\alpha a = \beta_*+\frac{1}{n+1}(\beta-\beta_*)$ and $b-a\alpha=\frac{1}{n+1}(\beta-\beta_*)$, we eventually find 
 \begin{equation}
 \label{phi''}
 \phi''=s(1-\frac s\alpha)\beta_*\left(1+\frac{1}{(n+1)\beta_*}(\beta-\beta_*)\cdot\frac{\alpha^{n+1}-s^{n+1}}{s^n(\alpha-s)}\right).
 \end{equation}
Setting $V_\beta(s)=\phi''$, we can rewrite $g_\beta$ as
  \begin{equation}
 \label{gb}
 g_\beta= \frac{ds^2}{V_\beta(s)}+V_{\beta}(s)4\eta^2+\frac {2s}{\alpha-1}g_D.
 \end{equation}
Using \eqref{phi''}, we find:
 \begin{enumerate}[label=$\bullet$]
 \item When $s\to \alpha$, $V_\beta(s)\simeq (\alpha-s)\beta$. Setting $r :=2\sqrt{\frac {\alpha-s}{\beta}}$, we see that $g_\beta \simeq dr^2+\beta^2r^2\eta^2+\frac{2s}{\alpha-1}g_D$, i.e. $g_\beta$ has a cone singularity of angle $2\pi \beta$ along $D$.
 \item When $s\to 0$, we have either 
 \begin{equation}
 \label{equiV}
 V_\beta(s)\simeq
 \begin{cases}
  \frac{\alpha^n}{n+1}(\beta-\beta_*)\frac1{s^{n-1}} & \mbox{if } \beta>\beta_* \\
  s\beta_* & \mbox{if } \beta=\beta_*
  \end{cases}.
  \end{equation}
  In the case where $\beta>\beta_*$, $g_\beta$ is an incomplete metric where the circles blow up and the divisor gets contracted. 
In the case where $\beta=\beta_*$, $g_{\beta_*}$ is incomplete again and contracts the divisor at infinity $D_\infty$ in $L$; i.e. $D_\infty=\mathbb P(L) \subset \mathbb P(L\oplus \mathcal O_D)$. The metric $g_{\beta_*}$ is equivalent near zero to the cone metric 
\begin{eqnarray*}
g_{\rm cone}&=&\frac{ds^2}{s\beta_*}+4s\beta_*\eta^2+\frac{2s}{\alpha-1}g_D\\
&=&dr^2+r^2\beta_*^2 \eta^2+\frac{\beta_* r^2}{2(\alpha-1)}g_D
\end{eqnarray*}
with $r:=2\sqrt{\frac {s}{\beta_*}}$, which is the asymptotic cone of the Tian-Yau metric. 
 \end{enumerate}
 
 \bigskip
 
 Let us now obtain an expansion in $\beta-\beta_*$ of the metric $g_\beta$, which we will be mostly interested in near $s=0$ where we will glue it to the Tian-Yau metric. So we set
\begin{eqnarray*}
g_{\beta_*}&:=&\frac{1}{s(1-\frac s \alpha)\beta_*}ds^2+4s(1-\frac s \alpha)\beta_*\eta^2+\frac{2s}{\alpha-1}g_D\\
&=&g_{\rm cone}+\frac{s}{\alpha}(\frac{ds^2}{s\beta_*}-4s\beta_*\eta^2)+O(s^2)\\
&=&g_{\rm cone}+\frac{\beta_*}{4\alpha}r^2dr^2+O(r^4).
\end{eqnarray*}
The expansion of $g_\beta$ is given by 
\[g_\beta=g_{\beta_*}+(\beta-\beta_*)g_1+O((\beta-\beta_*)^2)\]
where
\[g_1=\frac{\alpha^{n+1}-s^{n+1}}{s^n(\alpha-s)}\cdot \frac{1}{(n+1)\beta_*}\left(\frac{-ds^2}{s(1-\frac s\alpha)\beta_*}+4s(1-\frac s\alpha)\beta_* \eta^2\right).\]

\subsection{Finding the scaling factor and the gluing region}

 In order to perform a gluing construction, we need to ensure that most of the domain of the potential $\phi_\beta$ be included in an infinitesimally small (i.e. as $\beta\to \beta_*$) neighborhood of the zero section. This is certainly not the case as such but it can be achieved by using the radial automorphisms of $L$. More precisely, we use the scaling $\mathbb C^*$ action on $L$ by $t\cdot (x,v)=(x,t v)$ to get a new potential $\phi_{\beta}^t(v)=\phi_\beta(t v)$. In terms of the function $u$, we have $\phi_{\beta}^t(u)=\phi_\beta(u-\log |t|^2)$. 
 
 From now on we use a diffeomorphism of a small neighborhood of fixed euclidean size of $D\subset X$ with a small neighborhood of the zero section in $L$ to identify both sets. If we glue near $u$, we have to make sure that the size of $D$ measured with $\omb$ and $\ep \om_{\rm TY}$ is the same (here $\ep=\ep_\beta$), which yields the equation
 \begin{equation}
 \label{scale factor}
 (-\phi_\beta^t)'(u)\approx\ep  e^{-\beta_*u},
 \end{equation}
 where $\approx$ means that the log of the quotient is bounded. Given \eqref{expansion}, we find that in the zone $(\beta-\beta_*)^{\frac 1n} \ll e^{-\beta_*u} \ll 1$ we have $\phi_\beta'(u) \approx e^{-\beta_*u}$ hence the equation \eqref{scale factor} is satisfied provided
 \[\ep\approx |t|^{2\beta_*}, \quad \mbox{i.e.} \,\, \log |t|^2=\frac1{\beta_*} \log \ep+O(1)\]
 holds.  It will be convenient to introduce the scaling (shrinking) map $\lambda_\ep:L\to L$ defined by
 \begin{equation}
 \label{scaling}
 \lambda_\ep(v)=\ep^{\frac {1}{2\beta_*}} v.
 \end{equation}
 
 \noindent
 From now on we set 
 \[\wub:=-\frac {1}{n\beta_*} \log (\beta-\beta_*)\]
  Finally, we set 
  \[\psi_\beta:=\phi_\beta(\cdot - \frac 1{\beta_*}\log \ep)=(\lambda_\ep)_*\phi_\beta\]
   which is defined on $(u\le \wub+ \frac 1{\beta_*}\log \ep)$. The asymptotics of $\phi_\beta(v)$ match those of the conic Tian-Yau potential in the region where $1\ll v \ll -\frac 1{n\beta_*} \log(\beta-\beta_*)$, cf \eqref{expansion}. Therefore we need to glue $dd^c \psi_\beta(u)$ to $\ep_\beta \om_{\rm TY}$ near $u=\ub$ (i.e. in a zone like $ |u-u_\beta| \le 1$)  satisfying
 \[\frac 1\beta_*\log \ep_\beta \ll \ub \ll \frac 1{\beta_*} \log \frac{\ep}{(\beta-\beta_*)^{\frac 1n}}\]
 In term of the Tian-Yau radius $R=2|v|^{-\beta_*}$, this means 
 \begin{equation}
 \label{zone}
 (\beta-\beta_*)^{\frac 1n} \ll \ep R^2 \ll 1.
 \end{equation}
 In the gluing zone, \eqref{psi5} guarantees that $s(u)=-\phi'(u - \frac 1\beta_*\log \ep) \simeq \ep R^2$ hence $s^n \gg \beta-\beta_*$ by \eqref{zone}. Combined with \eqref{phi''}, this shows that 
 \begin{equation}
   \label{eq:42}
   g_\beta \approx g_{\rm cone}=\frac{ds^2}{s\beta^*}+4s\beta_*\eta^2+\frac{2s}{\alpha-1}g_D.
 \end{equation}

 \section{Automorphisms and related objects}
 \label{sec:about-automorphisms}
 
 \subsection{Relations between $\mathrm{Aut}^\circ(X,D)$ and $\mathrm{Aut}^\circ(D)$}
 Let $Y:=\mathbb P(L\oplus \mathcal O_D)$, let $p:Y\to \overline L$ be the contraction of the divisor at infinity $D_\infty:=\mathbb P(L\oplus 0)\subset Y$ and let $x_\infty=p(D_\infty)\in \overline L$, cf section~\ref{sec setup} for the definition of $\overline L$.  The zero section $D:=\mathbb P(L\oplus 1)$ can be equivalently seen inside $\overline L$ or $Y$.  We have a natural sequence
 \begin{equation}
 \label{ES}
 1\longrightarrow \mathbb C^*\longrightarrow \mathrm{Aut}^\circ(\overline L,D)  \longrightarrow  \mathrm{Aut}^\circ(D)\longrightarrow1
 \end{equation}
 where the action of $\mathbb C^*$ is given on $Y$ by $\lambda\cdot [v:z]:=[\lambda v:z]$; it obviously preserves $D=(v=0)$ and $D_\infty=(z=0)$ hence descends to an action of $\mathbb C^*$ on  $\overline L$ preserving $D$. The second arrow is simply the restriction to $D$.

  \begin{lem}
  \label{exact}
The short sequence \eqref{ES} is exact. 
 \end{lem}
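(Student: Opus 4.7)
The strategy is to check the three required properties of the sequence separately, and to use the fact that $p\colon Y\to \overline L$ is the minimal resolution of the singular point $x_\infty$. Any element of $\mathrm{Aut}^\circ(\overline L,D)$ must fix $x_\infty$ (the unique singular point) and therefore lifts uniquely to an automorphism of $Y$ preserving the exceptional divisor $D_\infty$ as well as the zero section $D$. In this way the analysis takes place on the $\mathbb P^1$-bundle $Y=\mathbb P(L\oplus\mathcal O_D)\to D$.

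Injectivity of $\mathbb C^*\to\mathrm{Aut}^\circ(\overline L,D)$ is immediate, since for $\lambda\ne 1$ the action $[v:z]\mapsto[\lambda v:z]$ is nontrivial on a generic fiber. For exactness in the middle, take $\tilde\phi$ in the kernel of restriction to $D$. After lifting to $Y$, $\tilde\phi$ is a bundle automorphism of $\mathbb P(L\oplus \mathcal O_D)\to D$ covering $\mathrm{id}_D$, and preserving the two distinguished sections $D$ and $D_\infty$; hence it is of the form $[v:z]\mapsto[\mu(x) v:\nu(x) z]$ for some $\mathcal O_D^*$-valued functions $\mu,\nu$. Since $D$ is compact and connected, $H^0(D,\mathcal O_D^*)=\mathbb C^*$, so $\mu,\nu$ are constants, and projectively we only see the ratio: $\tilde\phi$ is the scaling by $\mu/\nu\in\mathbb C^*$, as required.

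The nontrivial step is surjectivity of the restriction map to $\mathrm{Aut}^\circ(D)$. Given $\phi\in\mathrm{Aut}^\circ(D)$, one needs to lift $\phi$ to a biholomorphism of the total space of the line bundle $L$; equivalently, one needs an isomorphism of line bundles $\phi^*L\cong L$. For this, I would invoke adjunction: since $-K_X\sim_\Q\alpha D$ we get $-K_D\sim_\Q(\alpha-1)L$, so the class of $L$ in $\mathrm{Pic}(D)\otimes\Q$ is a (nonzero) multiple of $-K_D$. Because $D$ is Fano, $\mathrm{Pic}(D)$ is a finitely generated free abelian group, and consequently the connected group $\mathrm{Aut}^\circ(D)$ acts trivially on it. Therefore $\phi^*L\cong L$ as holomorphic line bundles on $D$. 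Picking any such isomorphism produces an automorphism $\tilde\phi$ of the total space of $L$ covering $\phi$ (unique up to fiberwise rescaling by a constant in $\mathbb C^*$, which is precisely the indeterminacy measured by the kernel above). Finally, $\tilde\phi$ extends to $\overline L$ by sending $x_\infty$ to itself.

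The main potential obstacle is the surjectivity step, and within it the claim $\phi^*L\cong L$: if $D$ had continuous moduli of line bundles one could only conclude this up to a translation in $\mathrm{Pic}^0(D)$. Here however the Fano assumption forces $\mathrm{Pic}^0(D)=0$, which makes the argument clean. The remaining points are either formal (functoriality of the resolution $Y\to\overline L$) or standard (compactness of $D$ forcing $\mathcal O_D^*(D)=\mathbb C^*$).
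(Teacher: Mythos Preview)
Your overall strategy coincides with the paper's: lift to $Y=\mathbb P(L\oplus\mathcal O_D)$ via the blow-up of $x_\infty$, check surjectivity by producing a line-bundle isomorphism $\phi^*L\cong L$, and identify the kernel with the fibrewise $\mathbb C^*$-action. Your surjectivity argument (using that $\mathrm{Pic}(D)$ is discrete for a Fano $D$, so the connected group $\mathrm{Aut}^\circ(D)$ acts trivially on it) is correct and in fact a bit more careful than the paper's one-line justification via $L\sim_\Q -\tfrac1{\alpha-1}K_D$.

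There is, however, a genuine gap in your treatment of exactness in the middle. You write that after lifting to $Y$ the automorphism ``is a bundle automorphism of $\mathbb P(L\oplus\mathcal O_D)\to D$ covering $\mathrm{id}_D$''. This is precisely the nontrivial point: an automorphism of $Y$ that fixes $D$ pointwise and preserves $D_\infty$ is \emph{not} obviously fibre-preserving. (On the non-compact total space of a line bundle there are automorphisms fixing the zero section pointwise that are not bundle maps; compactness is what rescues the situation, but one must say why.) The paper closes this gap by invoking Blanchard's theorem \cite{Blanchard56}: any element of the identity component of the automorphism group of a fibre bundle with compact fibres preserves the fibration. Once that is established, the rest of your argument ($\tilde\phi$ restricted to each fibre fixes $0$ and $\infty$, hence is $[v:z]\mapsto[\lambda(x)v:z]$ with $\lambda\in H^0(D,\mathcal O_D^*)=\mathbb C^*$) goes through exactly as you wrote it. So the fix is simply to cite Blanchard (or, alternatively, to argue directly that $\tilde\phi\in\mathrm{Aut}^\circ(Y)$ acts trivially on $H_2(Y,\mathbb Z)$ and hence must send each fibre to an irreducible curve in the fibre class through the same point of $D$, which forces it to be the same fibre).
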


\begin{proof}
The rightmost arrow is exact since any automorphism of $D$ induces a linear automorphism of $L=(\alpha-1)K_D^{-1}$. It remains to see that we have exactness in the middle. So let $f\in \mathrm{Aut}^\circ(\overline L,D) $ acting trivially on $D$; we need to show that $f$ comes from the $\mathbb C^*$ action. First, $f$ obviously preserves the singular locus $\overline L^{\rm sing}=\{x_\infty\}$. Since $p$ is simply the blow up of 
$x_\infty\in \overline L$, $f$ lifts to an automorphism $\widetilde f \in \mathrm {Aut}^\circ(Y)$ acting trivially on $D$. A classical argument due to Blanchard \cite{Blanchard56} shows that any element in the identity component of the automorphism group of a fiber bundle with compact fibers fixing the base has to preserve the fiber bundle structure. In our case it means that $\widetilde f$ preserves the $\mathbb P^1$ fibers of the projection $\rho:Y\to D$. Given $x\in D$, $\widetilde f$ acts on the fiber $\rho^{-1}(x)\simeq \mathbb P^1$ preserving $0$ (i.e. $D\cap \rho^{-1}(x)$) and $\infty$ (i.e. $D_\infty\cap \rho^{-1}(x)$),  hence it can be written as an homothety $[u:v]\mapsto [a(x)u,v]$. The function $a$ defines a holomorphic map $D\to \mathbb C^*$ hence it is constant. The lemma follows.
\end{proof}

%\begin{lem}
%\label{topo}
%$\overline L$ has klt singularities and satisfies $\pi_1(\overline L^{\rm reg})=\pi_1(L)=\{1\}$.
%\end{lem}
%
%\begin{proof}
%$L$ is a $\mathbb C$-bundle over the smooth Fano manifold $D$ hence the exact sequence $\pi_1(\mathbb C)\to \pi_1(L)\to \pi_1(D)\to 1$ shows the second claim. 
%
%The first claim is classical since $\overline L$ is a cone over a Fano manifold. Indeed, we have $K_Y=\pi^*K_{\overline L}+aE$ where $E\simeq D$ is the divisor at infinity in $Y=\mathbb P(L\oplus \mathbb C)$. By adjunction, we get $K_E\simeq (a+1) N_{E|Y}$. Since the normal bundle is negative and $E$ is Fano, we infer $a+1>0$, hence the claim. 
%\end{proof}

\begin{lem}
\label{toric}
Assume $(X, D)\neq (\mathbb P^n, H)$ for $H$ an hyperplane and consider the morphism $\rho_D$ induced by restriction to $D$
\begin{equation}
\label{rhoD}
\rho_D:\mathrm{Aut}^\circ(X,D) \longrightarrow \mathrm{Aut}^\circ(D).
\end{equation}
The following hold.
\begin{enumerate}[label= (\roman*)]
\item $\rho_D$ has finite kernel. 
\item If $X$ is toric, then $\rho_D$ is a surjective, finite étale cover. 
\end{enumerate}
\end{lem}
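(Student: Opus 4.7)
The plan is to pass to Lie algebras. Identifying $\mathrm{Lie}(\mathrm{Aut}^\circ(X,D)) = H^0(X, T_X(-\log D))$ and $\mathrm{Lie}(\mathrm{Aut}^\circ(D)) = H^0(D, T_D)$, the differential of $\rho_D$ is restriction. Starting from the short exact sequence on $X$
\[
0 \longrightarrow T_X(-D) \longrightarrow T_X(-\log D) \longrightarrow T_D \longrightarrow 0
\]
(where the kernel is identified with $T_X \otimes \mathcal I_D$ by a local coordinate check: in coordinates with $D = (z_1 = 0)$, sections of $T_X(-\log D)$ whose restriction to $T_D$ vanishes are precisely those of the form $z_1 W$ with $W$ arbitrary), injectivity and surjectivity of $(\rho_D)_*$ reduce respectively to the vanishing of $H^0(X, T_X(-D))$ and $H^1(X, T_X(-D))$. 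Using $T_X \cong \Omega_X^{n-1} \otimes (-K_X)$ together with $-K_X - D \sim_{\mathbb Q} (\alpha-1)D$, I rewrite $T_X(-D) \cong \Omega_X^{n-1} \otimes \mathcal O_X((\alpha-1)D)$, with $(\alpha-1)D = -K_X - D$ an ample Cartier divisor.

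For (i), I argue that a nonzero $V \in H^0(X, T_X(-D))$ is a holomorphic vector field vanishing on $D$, whose first-order normal linear action on $L = N_D$ is a scalar $c \in H^0(D, \mathrm{End}(L)) = \mathbb C$. If $c \neq 0$, the flow of $V$ integrates to a $\mathbb C^*$-action on $X$ fixing $D$ pointwise with nontrivial weight on $L$. By the Bialynicki-Birula decomposition, the unstable cell of $D$ is $\mathbb C^*$-equivariantly isomorphic to the total space of $L$, and $X$ is a smooth $\mathbb C^*$-equivariant completion of $L$. Either (a) $X = \overline L$ is the projective cone, smooth only when $(D, L) = (\mathbb P^{n-1}, \mathcal O(1))$, i.e.\ $(X,D) = (\mathbb P^n, H)$, excluded; or (b) $X = \mathbb P(\mathcal O_D \oplus L)$ is a $\mathbb P^1$-bundle with two sections, where the hypothesis $-K_X \sim_{\mathbb Q} \alpha D$ (forbidding a pull-back contribution transverse to $D$) forces $L = -K_D$ and $\alpha = 2$, yielding $-K_X = 2\xi$ with $\xi = c_1(\mathcal O_{\mathbb P(E)}(1))$; but then $\xi$ restricts trivially to the section $\sigma_0 = \mathbb P(\mathcal O_D)$, so $-K_X$ is not ample, contradicting Fano-ness of $X$. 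Hence $c = 0$ and $V$ vanishes to order $\geq 2$ along $D$. One iterates: the normal $k$-th jets live in $H^0(D, L^{1-k}) = 0$ for $k \geq 2$ by ampleness of $L$, and the tangential jets in $H^0(D, T_D \otimes L^{-k})$ are handled by similar arguments; since $D$ is ample, $H^0(X, T_X(-kD)) = 0$ for $k$ sufficiently large, closing the iteration and forcing $V = 0$.

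For (ii), assuming $X$ is toric and granted (i), surjectivity of $(\rho_D)_*$ reduces via the long exact sequence to the vanishing of
\[
H^1(X, T_X(-D)) = H^1\bigl(X, \Omega_X^{n-1} \otimes ((\alpha-1)D)\bigr).
\]
Since $X$ is a smooth projective toric variety and $(\alpha-1)D$ is ample Cartier, Bott vanishing ($H^i(X, \Omega_X^p \otimes \mathcal L) = 0$ for $i > 0$ and $\mathcal L$ ample) supplies the required vanishing. Integrating, $\rho_D$ is an isomorphism of connected algebraic groups.

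The main difficulty is (i): the Bialynicki-Birula plus Fano-ness argument cleanly dispatches the semisimple $c \neq 0$ case by exploiting the rigid cone geometry, but systematically ruling out the unipotent contributions (tangential 1-jets and higher-order vanishing terms) when $c = 0$ requires the iterative analysis of jets along $D$ sketched above. Part (ii), by contrast, is a direct consequence of the Lie-algebraic reformulation and Bott vanishing.
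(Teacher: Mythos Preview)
Your setup via the exact sequence $0 \to T_X(-D) \to T_X(-\log D) \to T_D \to 0$ and the identification $T_X(-D) \cong \Omega_X^{n-1} \otimes \mathcal O_X((\alpha-1)D)$ matches the paper exactly, and your proof of (ii) via Bott vanishing is identical to the paper's. For (i), the paper does not attempt a direct argument: it simply invokes Wahl's theorem \cite{Wahl83}, which asserts precisely that $H^0(X, T_X(-D)) = 0$ for a smooth ample divisor $D$ unless $(X,D) = (\mathbb{P}^n, H)$.

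Your attempt to reprove Wahl's result has two genuine gaps. First, in the $c \neq 0$ case, the dichotomy (a)/(b) is unjustified as stated; in fact case (b) cannot occur, because ampleness of $D$ makes $X \setminus D$ affine, so the complement of the open Bia\l ynicki-Birula cell is both projective and contained in an affine variety, hence a finite set --- forcing the one-point compactification (a). Second, and more seriously, the $c = 0$ case is incomplete. Your claim that $c = 0$ forces $V$ to vanish to order $\geq 2$ ignores the tangential $1$-jet, which lies in $H^0(D, T_D \otimes L^{-1})$; you give no argument that this vanishes, and ``similar arguments'' does not supply one. The iterative scheme you sketch therefore never gets off the ground: at each order $k \geq 1$ the tangential jet lives in $H^0(D, T_D \otimes L^{-k})$, whose vanishing is not a formal consequence of ampleness of $L$ and requires real input (this is, in essence, the content of Wahl's theorem in the unipotent case). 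Unless you supply that input, the proof of (i) is incomplete; the clean fix is to cite Wahl, as the paper does.
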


\begin{proof}
Since the groups at stake are algebraic, the lemma can be rephrased by saying that 
\[H^0(X, T_X(-\log D))\to H^0(D, T_D)\]
is injective (first item) and surjective when $X$ is toric (second item). We have the following exact sequence 
\begin{equation}
\label{exact seq}
0\longrightarrow T_X(-D)\longrightarrow T_X(-\log D) \longrightarrow T_D \longrightarrow 0.
\end{equation}
By a result of  J. Wahl \cite{Wahl83}, we have $H^0(X, T_X\otimes \mathcal O_X(-D))=0$ since $(X,D)\neq (\mathbb P^n, H)$, so this shows $(i)$. 

Using \eqref{exact seq} again, $(ii)$ reduces to showing that $H^1(X, T_X(-D))=0$. Since $T_X\simeq \Omega_X^{n-1}\otimes K_X^{-1}$, we have to show that 
\[H^1(X,\Omega_X^{n-1}\otimes L)=0,\]
where $L=-K_X-D\simeq(\alpha -1)D$ is an ample line bundle. But this follows precisely from Bott's vanishing theorem for toric varieties, cf \cite[\textsection~3.3]{Oda}.
\end{proof}

%\begin{rem}
%If $X=G/P$ is a generalized Grassmannian and $D$ is a smooth hyperplane section with respect to the minimal $G$-equivariant embedding of $X$, then $\rho_D$ is an isomorphism thanks to \cite[Theorem~1.1]{BM21}. 
%\end{rem}

If $D$ is allowed to be singular, then $\rho_D$ need not be isomorphic as shown by the following example, which was communicated to us by Thibaut Delcroix. 
\begin{exa}
\label{ex auto}
Let $X=Q_4=\{\sum_{i=0}^5 z_i^2=0\}\subset \mathbb P^5$ be the four-dimensional smooth quadric and let $D=\{z_0^2+z_1^2+z_2^2=0= z_3^2+z_4^2+z_5^2\}\subset X$. Then $D$ is irreducible and reduced, with an action of $\mathrm{SO}_3(\mathbb C)\times \mathrm{SO}_3(\mathbb C)$ with dense orbit, and satisfies $-K_X \sim_{\mathbb Z} 2 D$. But the one-parameter family of automorphisms 
\[t\cdot [z_0: \ldots: z_5]=[tz_0: tz_1: tz_2: t^{-1}z_3:t^{-1}z_4:t^{-1}z_5]\] does not extend to $\mathrm{PSO}_{6}(\mathbb C)$. Moreover, one can check that $D$ admits a Kähler-Einstein metric combining \cite{Del20} and \cite{Li22}. 
\end{exa}

\subsection{About the normal bundle}
\label{sec normal}
In this section, we investigate whether the formal neighborhood of $D$ in $X$ of order one (that is, the ringed space $(D, \mathcal O_X/\mathcal I_D^2)$ coincides with the formal neighborhood of $D$ in $L$ (that is, the ringed space $(D, \mathcal O_L/\mathcal I_D^2)$). From \cite[Proposition~1.5]{ABT09}, this is equivalent to the normal short exact sequence
\begin{equation}
\label{NES}
0\longrightarrow T_D \longrightarrow T_{X}|_D \longrightarrow N_D \longrightarrow 0
\end{equation}
being split. A sufficient condition for the splitting of \eqref{NES} to happen is to have the vanishing $H^1(D, T_D\otimes N_D^{-1})=0$ as we see by tensorizing the above sequence with $N_D^{-1}$ and observing that extensions of $\mathcal O_D$ by $T_D\otimes N_D^{-1}$ are parametrized by the image of $1\in H^0(D, \mathcal O_D)$ in $H^1(D, T_D\otimes N_D^{-1})$ under the connecting homomorphism in the long exact sequence in cohomology associated to \eqref{NES}.  We give a couple of situations where \eqref{NES} is split or non-split.  \\

\begin{exa}
\label{split Pn}
 If $X=\mathbb P^n$, then a theorem of Van de Ven \cite{VdV} shows that \eqref{NES} is split if and only if $D$ is a linear subspace of $\mathbb P^n$. Alternatively, if \eqref{NES} is split, then $T_D$ is a quotient of the ample bundle $T_{\mathbb P^n}|_D$ hence it is ample, too. By Mori theorem \cite{Mori79} (or Siu-Yau's proof of Frankel's conjecture \cite{SY80}), $D\simeq \mathbb P^n$
\end{exa}

\begin{exa}
\label{split toric}
 If $X$ is toric and $\alpha >2$, then \eqref{NES} is split if and only if $(X,D)=(\mathbb P^n,H)$ where $H$ is an hyperplane. Indeed, we have 
 {\small
 \[H^1(X, T_X(-2D))\simeq H^1(X, \Omega_X^{n-1}\otimes K_X^{-1}(-2D))=H^1(X, \Omega_X^{n-1}\otimes \mathcal O_X((\alpha-2)D))=0\] 
} by Bott vanishing theorem. In particular, the exact sequence
\[0\longrightarrow T_X(-2D) \longrightarrow T_{X}(-D) \longrightarrow T_{X}|_D\otimes N_D^{-1} \longrightarrow 0\]
shows that any section of $T_X|_D\otimes N_D^{-1}$ extends to a section of $T_X(-D)$. Now, if \eqref{NES} splits, we precisely get a non-zero section $H^0(D,T_X|_D\otimes N_D^{-1})$, hence we obtain a non-zero section $H^0(X, T_X(-D))$. By \cite{Wahl83}, this implies that $(X,D)$ is isomorphic to $(\mathbb P^n,H)$ as announced. 
\end{exa}

\begin{exa}
\label{split D toric} If $D$ is toric and $\alpha>2$, then \eqref{NES} is always split. Indeed, we have $H^1(D, T_D\otimes N_D^{-1})\simeq H^1(D, \Omega_D^{n-2}\otimes \mathcal O_D((\alpha-2)D))=0$ by Bott vanishing theorem.
\end{exa} 

\begin{exa}
\label{split quadric}
 If $X= Q_n\subset \mathbb P^{n+1}$ is the $n$-dimensional quadric and $D=Q_{n-1}= Q_n \cap H$ for an hyperplane $H$, then we have $K_X^{-1}=\mathcal O_X(n)\sim nD$, i.e. $\alpha=n$, and $D$ is Kähler-Einstein. Moreover, for any $m\ge 1$, we have the vanishing $H^1(Q_{m}, T_{Q_{m}}(-1))\simeq H^1(Q_m, \Omega_{Q_m}^{m-1}(m-1))=0$ by \cite[p. 174]{Snow}. Apply this to $m=n-1$ to get that \eqref{NES} is split. Conversely, if $D\subset Q_n$ is an hypersurface such that \eqref{NES} splits, then $D\simeq Q_{n-1}$ by \cite[Theorem~4.7]{Jahnke}.
 \end{exa}

%
%\begin{lem}
%\label{toric}
%If $X$ is toric (e.g. $X=\mathbb P^n$), then natural morphism 
%\[\mathrm{Aut}^\circ(X,D) \longrightarrow \mathrm{Aut}^\circ(D)\]
%induced by restricting to $D$ is surjective.
%\end{lem}
%
%\begin{proof}
%Equivalently, we have to show that 
%\[H^0(X, T_X(-\log D))\to H^0(D, T_D)\]
%is onto. Using the exact sequence 
%\begin{equation}
%\label{exact seq}
%0\longrightarrow T_X(-D)\longrightarrow T_X(-\log D) \longrightarrow T_D \longrightarrow 0
%\end{equation}
%we see that it is enough to show that $H^1(X, T_X(-D))=0$. Since $T_X\simeq \Omega_X^{n-1}\otimes K_X^{-1}$, we have to show that
%\[H^1(X,\Omega_X^{n-1}\otimes L)=0,\]
%where $L=-K_X-D\simeq(\alpha -1)D$ is an ample line bundle. But this follows precisely from Bott's vanishing theorem, cf \cite[\textsection~3.3]{Oda}.
%\end{proof}
%
%
%The following result is due to J. Wahl. 
%\begin{thm}[\cite{Wahl83}]
%\label{Wahl}
%We have $H^0(X, T_X\otimes \mathcal O_X(-D))=0$ unless $(X,D)=(\mathbb P^n,H)$, where $H$ is an hyperplane. 
%\end{thm}
%
%Since we have assumed from the beginning that $(X,D)$ was different from $(\mathbb P^n, H)$, this implies that the exact sequence \eqref{exact seq} above induces an injection
%\begin{equation}
%\label{VF}
%H^0(X, T_X(-\log D)) \hookrightarrow H^0(D, T_D).
%\end{equation}
%In particular, it follows from Lemma~\ref{toric} that \eqref{VF} is an isomorphism if $X$ is toric. 

\section{The tubular neighbourhood of $D$}
\label{sec:tubul-neighb-d}

Since $-K_X \sim \alpha D$, the normal bundle $L=N_D$ of $D$ in $X$ has a canonical volume form, which in local coordinates $(z^0,...,z^{n-1})$ such that $D=\{z^0=0\}$ can be written as
\begin{equation}
  \label{eq:18}
  \Omega_L = v(z^1,...,z^{n-1}) \frac{dz^0}{(z^0)^\alpha}\wedge dz^1 \wedge \cdots \wedge dz^{n-1}.
\end{equation}
This is not completely correct, since $(z^0)^\alpha$ a priori does not make sense: if we write $\alpha=\frac pq$ then we only have a section $\Omega^q$ of $K_X^q(pD)$ on $X$. But since at the end we need only to consider the real volume form $\Omega\wedge\overline \Omega$ which is well defined as $(\Omega^q\wedge \overline{\Omega^q})^{\frac 1q}$, we will simplify the notation and formulas by writing $\Omega$ (and $\Omega_L$ on $L$).

\subsection{The complex structure}
\label{sec:complex-structure}

Let $\Delta_L\subset L$ be the disc normal bundle of $D$. The connection 1-form $\eta$ on $L\setminus D$ gives a horizontal space $H\subset T\Delta_L$ which is transverse to each disc. If we have another complex structure $J$ on $\Delta_L$, we compare it to the complex structure $J_L$ of $L$ by a tensor $\phi\in \Omega^{0,1}_{J_L}\otimes T^{1,0}_{J_L}$ so that
\[ T^{0,1}_J = \{ X + \phi_X, X\in T^{0,1}_{J_L} \}. \]
The integrability of $J$ gives the equation $\dbar \phi + \frac12[\phi,\phi]=0$, where the bracket is constructed from the exterior product of forms and from the bracket of vector fields.

We will use the following canonical parametrization of a small tubular neighbourhood of $D$ by $\Delta_L$, see \cite[Theorem 4.1]{Biq02}:
\begin{prop}\label{prop:normal-form-1}
  There exists a small neighbourhood $U_L$ of $D$ in $X$ and a diffeomorphism $\Upsilon:\Delta_L\rightarrow U_L$ such that $\phi=\Upsilon^*J_X-J_L \in \Omega^{0,1}(T^{1,0})$ is a section of $H_{0,1}\otimes H^{1,0}$ (that is, $\phi$ is purely horizontal) which satisfies $\phi\lrcorner d\eta=0$, $\phi|_D=0$ and is holomorphic along the discs of $\Delta_L$.

  Moreover $\Upsilon^*\Omega = v (1-\phi)^*\Omega_L$ where $v$ is a function on $\Delta_L$ which is holomorphic along the discs and satisfies $v|_D=1$.\qed
\end{prop}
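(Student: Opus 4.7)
The plan is to build $\Upsilon$ by successive normalisations of an initial tubular neighbourhood diffeomorphism, using that $D$ is a smooth divisor whose normal bundle is $L$.

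First, I would exploit the fact that for every point $x \in D$ and every non-zero $v \in L_x$ there is a unique germ of $J_X$-holomorphic disc in $X$ through $x$ whose tangent at $x$ is $v$. These germs fit together to foliate a neighbourhood of $D$ by holomorphic discs parametrised by the disc bundle $\Delta_L$, producing a diffeomorphism $\Upsilon_0 : \Delta_L \to U_L$ for which every disc of $\Delta_L$ is $\Upsilon_0^*J_X$-holomorphic. Consequently the tensor $\phi_0 := \Upsilon_0^* J_X - J_L$ already annihilates every vertical $(0,1)$-vector and is holomorphic along each disc. Furthermore $\phi_0|_D = 0$, since $\Upsilon_0$ is the identity on $D$ and on $TX|_D = TD \oplus L$ the two complex structures agree tautologically.

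Second, I would normalise the horizontal behaviour. With respect to the splitting $T\Delta_L = V \oplus H$ provided by the Chern connection of $L$ (whose curvature gives $d\eta$), the only remaining freedom is the vertical component $\phi_0^V \in H^{0,1}\otimes V^{1,0}$. This component can be killed by composing $\Upsilon_0$ with a fibre-preserving diffeomorphism of $\Delta_L$ which is holomorphic along discs; such a diffeomorphism exists because, at each $x\in D$, one is solving a problem on a disc for which $\phi_0$ is already holomorphic, and the freedom in reparametrising the holomorphic embedding of that disc into $X$ is exactly what is needed. The algebraic condition $\phi \lrcorner d\eta = 0$ is then enforced by a further $S^1$-equivariant adjustment of the horizontal lift, which uses that $d\eta$ is the pullback of the Kähler form of $D$. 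The integrability equation $\dbar \phi + \tfrac12 [\phi,\phi] = 0$ is what guarantees that these successive reductions can be carried out without violating the conditions already achieved, and in particular that $\phi|_D = 0$ is preserved throughout.

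Third, for the volume form identity I would argue as follows. The form $\Omega$ is a $J_X$-holomorphic $n$-form with a pole of order $\alpha$ along $D$, and by the pseudoconcavity discussed in Section~\ref{sec setup} it is unique up to a scalar; the same applies to $\Omega_L$ on $L$. The twist $(1-\phi)^*\Omega_L$ is designed precisely to convert a $J_L$-holomorphic $n$-form into a $\Upsilon^*J_X$-holomorphic one, hence it has the same type as $\Upsilon^*\Omega$. Their ratio $v := \Upsilon^*\Omega \big/ (1-\phi)^*\Omega_L$ is therefore a smooth $\mathbb{C}$-valued function, holomorphic along each disc because numerator and denominator are both $\Upsilon^*J_X$-holomorphic and the discs are $\Upsilon^*J_X$-holomorphic. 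Finally $v|_D = 1$ is arranged by using the remaining scaling freedom in the choice of $\Omega$ together with the explicit leading behaviour of both $n$-forms along $D$ dictated by the identification $L = N_D$.

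The main obstacle is the second step: one must show that the successive fibre-preserving and horizontal adjustments can be made simultaneously compatible with holomorphicity along discs and with $\phi|_D=0$. The integrability equation provides exactly the compatibility needed for each reduction to close up, and the detailed argument is carried out in \cite{Biq02}.
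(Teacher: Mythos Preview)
The paper does not prove this proposition at all; it is quoted directly from \cite[Theorem 4.1]{Biq02} and closed with a \qed. Your sketch, like the paper, ultimately defers to that reference, so in that sense you are aligned.

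That said, your first step contains a genuine error: the claim that there is a \emph{unique} germ of $J_X$-holomorphic disc through $x\in D$ with prescribed tangent $v\in L_x$ is false. For instance, in $\mathbb{C}^2$ with $D=\{z^0=0\}$, both $\{z^1=0\}$ and $\{z^1=(z^0)^2\}$ are holomorphic discs through the origin with tangent $\partial_{z^0}$. So the foliation you invoke does not come for free; producing it is essentially the content of the proposition, not a starting point.

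The method actually used in \cite{Biq02}, and which you can see reflected in the paper's proof of the subsequent Proposition~\ref{prop:normal-form-2}, is gauge-theoretic rather than disc-theoretic: one begins with an arbitrary tubular identification, writes $\phi=\Upsilon^*J_X-J_L$, and then normalises $\phi$ by the action of fibre-preserving diffeomorphisms generated by vector fields of the form $\xi=fi\zeta\partial_\zeta+X$ (see equations~\eqref{eq:25}--\eqref{eq:27}). The conditions that $\phi$ be purely horizontal, holomorphic along fibres, and satisfy $\phi\lrcorner d\eta=0$ are imposed as a slice condition for this action, and the integrability of $J_X$ is what makes the slice attainable. Your step three on the volume form is correct in spirit: once $\phi$ is in normal form, $(1-\phi)^*\Omega_L$ has the right type and the ratio $v$ is fibrewise holomorphic for the reason you give.
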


\subsection{The complex volume form}
\label{sec:complex-volume-form}

The meaning of Proposition \ref{prop:normal-form-1} is that any neighbourhood of $D$ which is close enough to a disc bundle $\Delta_L$ carries a unique fibration by holomorphic discs satisfying the conclusions of the proposition. But there is no particular choice of $U_L$. Therefore we get an additional freedom by perturbing $U_L$. Using this flexibility, one should be able to obtain $v\equiv 1$. As we need only finite developments, it will be sufficient to prove the following Proposition. We denote $\zeta$ the variable in the total space of $L$.

We decompose $T_{\mathbb{C}}L = \mathbb{C}\zeta \partial_\zeta \oplus T_{\mathbb{C}}D$. In the following proposition we will measure the norm of an endomorphism of $TL$ with respect to this decomposition (that is we consider $\zeta \partial_\zeta$ as having norm $1$).

\begin{prop}\label{prop:normal-form-2}
  There are purely horizontal tensors $\phi_i\in \Omega^{0,1}_D(T^{1,0}D\otimes L^{-i})$ for $i>0$ such that $\phi_i\lrcorner d\eta=0$ and the following is true.
  
  Given any $j>0$, one can choose a diffeomorphism $\Upsilon:\Delta_L\rightarrow U_L$ such that:
  \begin{enumerate}
  \item $\phi=\Upsilon^*J_X-J_L$ satisfies $\phi=\sum_1^{j-1} \zeta^i \phi_i+O(\zeta^j)$;
  \item $\Upsilon^*\Omega = v (1-\phi)^*\Omega_L$ with $v=1+O(\zeta^j)$. 
  \end{enumerate}
  where $O()$ is meant with derivatives. 
\end{prop}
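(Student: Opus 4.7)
I would prove Proposition~\ref{prop:normal-form-2} by induction on $j$, with Proposition~\ref{prop:normal-form-1} serving as the base case $j=1$: the conditions $\phi|_D=0$ and $v|_D=1$ are precisely $\phi = O(\zeta)$ and $v = 1 + O(\zeta)$, i.e.\ the assertion with an empty sum.

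For the inductive step, suppose we have a parametrization $\Upsilon:\Delta_L\to U_L$ satisfying the conclusion to order $j$, so that
\begin{equation*}
\phi = \sum_{i=1}^{j-1} \zeta^i \phi_i + \zeta^j \tilde\phi_j + O(\zeta^{j+1}),\qquad v = 1 + \zeta^j \tilde v_j + O(\zeta^{j+1}),
\end{equation*}
where $\phi_1,\dots,\phi_{j-1}$ are the canonical tensors produced at earlier stages and $\tilde\phi_j,\tilde v_j$ are sections over $D$ depending a priori on the particular $\Upsilon$. The plan is to replace $\Upsilon$ by $\Upsilon\circ\Psi$, where $\Psi$ is a diffeomorphism of $\Delta_L$ fixing $D$ of the form $\Psi = \exp\bigl(\zeta^j \xi_0 + O(\zeta^{j+1})\bigr)$, with $\xi_0$ a section of $T_{\mathbb{C}}L|_D \otimes L^{-j}$ to be determined. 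Because $\Psi$ is the identity to order $j-1$ in $\zeta$, such a modification does not affect the already-fixed tensors $\phi_1,\dots,\phi_{j-1}$, nor does it spoil the normalization $v=1+O(\zeta^j)$ at orders below $j$.

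The effect of $\Psi$ at order $\zeta^j$ is linear in $\xi_0$ and I would compute it explicitly. Decomposing $\xi_0 = \xi_0^h + \xi_0^v$ into a horizontal piece tangent to $T_{\mathbb{C}}D$ and a vertical piece along $\zeta\partial_\zeta$, one expects $\xi_0^v$ to shift $\tilde v_j$ by a pointwise rescaling (tracking how $\Omega_L$ transforms under the flow of $\Psi$), while $\xi_0^h$ shifts $\tilde\phi_j$ by a $\dbar$-derivative on $D$. I would then choose $\xi_0^v$ to kill $\tilde v_j$ --- a pointwise linear equation on $D$ which is clearly surjective --- and $\xi_0^h$ to bring $\tilde\phi_j$ into the unique horizontal representative satisfying $\phi_j\lrcorner d\eta = 0$ within its Dolbeault class on $D$, using the Kähler-Einstein metric $\omega_D$ to single out the harmonic representative. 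This defines the canonical $\phi_j$.

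The main technical obstacle is twofold: first, deriving the explicit transformation rules for $\tilde\phi_j$ and $\tilde v_j$ under $\Psi$ at the required order; second, and more subtly, verifying that $\Upsilon\circ\Psi$ still lies in the scope of Proposition~\ref{prop:normal-form-1}, i.e.\ that the new $\phi$ is again purely horizontal, satisfies $\phi\lrcorner d\eta = 0$, and is holomorphic along the discs. Both reduce to linearizing the integrability equation $\dbar\phi + \frac12[\phi,\phi]=0$ together with the horizontality conditions at order $\zeta^j$, and matching the tangent space to the set of normal-form parametrizations with the space of admissible $\xi_0$'s. Once this matching --- and the decoupling between $v$-data and $\phi$-data that it entails --- is in place, the induction closes; the tensors $\phi_i$ are then intrinsic because any two parametrizations achieving the normal form at order $j$ differ by a gauge that is the identity to order $j-1$, forcing their expansions to agree below order $j$.
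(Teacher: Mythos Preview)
Your inductive framework matches the paper's, but the decoupling you propose between $\xi_0^v$ and $\xi_0^h$ does not hold, and this is where the actual content of the proof lies. A purely vertical field $\xi_0^v = c\,i\zeta\partial_\zeta$ of fibre-degree $j$ does act on $v_j$ by a pointwise operator (multiplication by $i(j-\alpha+1)$, which incidentally is already \emph{not} surjective when $\alpha-1$ happens to be the integer $j$), but from \eqref{eq:26} one also has $\dbar\xi_0^v = \dbar_H c\otimes i\zeta\partial_\zeta$, which adds a component in $H_{0,1}\otimes\mathbb{C}\zeta\partial_\zeta$ to $\phi$ at order $j$. Since the statement requires $\phi_j$ to be purely horizontal, you are forced to cancel this with the specific horizontal field $-\sharp\dbar_H c$, i.e.\ to use the coupled field $\xi_c$ of \eqref{eq:27}. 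But then the action on $v_j$ is no longer pointwise: by \eqref{eq:11} and the Weitzenb\"ock identity \eqref{eq:28} it becomes the elliptic operator $\nabla^*\nabla+\alpha-1$ on sections of $K_DL^{\alpha-j-1}$, whose invertibility is exactly where the hypothesis $\alpha>1$ enters. Your ``clearly surjective pointwise'' step therefore misses the analytic heart of the argument.

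The paper accordingly proceeds in two sequential stages rather than one decoupled step. First, a general order-$j$ vector field is used to make the order-$j$ term of $\phi$ purely horizontal (one kills the $VV$, $VH$ and residual $HV$ components, with the integrability equation $\dbar\phi+\tfrac12[\phi,\phi]=0$ forcing fibre-holomorphy of what remains). Second, within the residual gauge freedom --- precisely the horizontality-preserving fields $\xi_f$ --- one solves the elliptic equation above to kill $v_j$. This second step further shifts the horizontal part of $\phi$ by $\dbar_H\sharp\dbar_H f_j$, so the final $\phi_j$ equals $\tilde\phi_j+\dbar_H\sharp\dbar_H f_j$ with $f_j$ determined by that elliptic problem, not a harmonic Dolbeault representative as you suggest. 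Finally, your worry about preserving the full gauge of Proposition~\ref{prop:normal-form-1} is unnecessary: Proposition~\ref{prop:normal-form-2} is deliberately weaker on $\phi$ (horizontality only to finite order) in exchange for the improved control on $v$.
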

Note that this statement is weaker than Proposition \ref{prop:normal-form-1} for the complex structure $\phi$ but gives more information on $v$. It is likely that one can obtain both the gauge of Proposition \ref{prop:normal-form-1} and $v\equiv1$, but this problem may involve a loss of derivatives which makes it difficult, so we will prove only Proposition \ref{prop:normal-form-2}.

The rest of this subsection is devoted to the proof of the Proposition.
% The technique is inspired from \cite[§ 7]{Biq19} where a parametrization of domains in the cotangent bundle of a hyperbolic Riemann surface is constructed. The point is that the cotangent bundle of a surface has a holomorphic symplectic form, which is the same as a holomorphic volume form, and this gives the connection with our problem.
So we are on the disc bundle $\Delta_L$, with a complex structure $J$ whose difference with $J_L$ is given by the tensor $\phi$ as described in Proposition \ref{prop:normal-form-1}.
% Since we have holomorphic objects along the discs of $\Delta_L$, all objects are described by their values on the circle bundle $M=\partial \Delta_L$.

We consider the infinitesimal action of the real part of the $(1,0)$ vector field (for $J_L$)
\begin{equation}
  \label{eq:25}
    \xi=f i \zeta \partial_\zeta + X
  \end{equation}
  where $X\in H^{1,0}$ is horizontal, and $i\zeta \partial_\zeta=R^{1,0}$ is the $(1,0)$ part of the Reeb vector field $R=\frac d{d\theta}$. The infinitesimal action on the complex structure $J_L$ is
  \begin{equation}
    \label{eq:26}
    \dbar \xi = 2\eta^{0,1} \otimes\big( \bar \zeta \partial_{\bar \zeta}f \zeta \partial_\zeta + \bar \zeta \partial_{\bar \zeta}X \big)
    + \big( \dbar_Hf + X \lrcorner d\eta \big) \otimes i\zeta\partial_\zeta + \dbar_HX.
  \end{equation}

  Observe that if $X= -\sharp \dbar_H f$ and $f$ is holomorphic along the fibers ($\partial_{\bar \zeta}f=0$), then $\dbar \xi=-\dbar_H\sharp \dbar_H f$ is purely horizontal, that is the vector field
  \begin{equation}
    \label{eq:27}
    \xi_f = f i \zeta \partial_\zeta -\sharp \dbar_H f
  \end{equation}
preserves infinitesimally the gauge of $\phi$ (of Proposition \ref{prop:normal-form-1}) if $f$ is holomorphic along the fibers. This is the flexibility alluded to on $U_L$ since $f$ is determined by its value on the boundary of $U_L$ and corresponds to perturbing this boundary. (This is actually the complexification of the action of the contactomorphisms of $\partial \Delta_L$ which was studied in \cite[§ 7]{Biq19}).

% The group of contactomorphisms of $(M,\eta)$ acts on the situation. Infinitesimally a contactomorphism of $M$ is given by a vector field $X_f=f R - \sharp d_H f$, where $R$ is the Reeb vector field (in our case, $R=\frac d{d\theta}$), $d_Hf$ is the horizontal differential (in the contact directions), and $\sharp$ is defined for any one form $u$ by $(\sharp u) \lrcorner d\eta=u$. Then the infinitesimal action on complex structures is given by
% \begin{equation}
%   \label{eq:17}
%   \dot \phi = \dbar_H \sharp \dbar_H f .
% \end{equation}
% More generally we can consider embeddings $\iota:M \hookrightarrow L$ such that $\iota_*H$ remains a complex subspace of $TL$. It turns out that this space is a formal complexification of the contactomorphism group: infinitesimally, such embeddings are given by a complex function $f$, the deformation is given by the action of the real part of the $(1,0)$ vector
% \begin{equation}
%   \label{eq:21}
%   \xi_f=2(fR^{1,0}-\sharp \dbar_H f),
% \end{equation}
% and the action on the complex structure remains given by (\ref{eq:17}).

Let us now investigate the action of such deformations on the function $v$.
The model holomorphic volume form (\ref{eq:18}) can be written as
\begin{equation}
  \label{eq:19}
  \Omega_L = \eta^{1,0} \wedge \Omega_D, \quad \Omega_D \in H^0(K_DL^{\alpha-1}).
\end{equation}
Recall that $-K_D=(\alpha-1)L$ so the section $\Omega_D$ is canonical up to a multiplicative constant, that we can choose so that it satisfies
\begin{equation}
  \label{eq:33}
  i^{(n-1)^2} \Omega_D \wedge \overline{\Omega_D} = 8b_{n,\alpha} e^{-n\beta_*u} \omega_D^{n-1},
\end{equation}
where $b_{n,\alpha}$ was defined in (\ref{eq:38}). The holomorphic volume form for the complex structure $J$ can be written as
\begin{equation}
  \label{eq:20}
  \Omega = v (1-\phi)^*(\eta^{1,0} \wedge \Omega_D)
\end{equation}
for some function $v$. Then the infinitesimal action of $\xi_f$ is given by
\begin{align}
  (\mathcal{L}_{\xi_f}\Omega_L)^{1,0} &= \partial(\xi_f \lrcorner \Omega_L) \notag \\
                     &= \partial( \frac12 f \Omega_D + \eta^{1,0} \wedge \sharp\dbar_H f \lrcorner \Omega_D) \notag \\
                     &= \eta^{1,0} \wedge \big( i \zeta\partial_\zeta(f \Omega_D) - \partial_H( \sharp \dbar_H f \lrcorner \Omega_D) \big) \notag \\
                       &= i\eta^{1,0} \wedge \big(\zeta \partial_\zeta(f \Omega_D) -  2\Delta_\partial (f\Omega_D)\big).\label{eq:11}
\end{align}

We now prove the Proposition by induction on $j$. From Proposition \ref{prop:normal-form-1} we know that Proposition \ref{prop:normal-form-2} is true for $j=1$. (One of course does not need the full strength of Proposition \ref{prop:normal-form-1}, it is sufficient to construct by hand a good enough $\Upsilon$). So suppose it is true for $j$ and let us prove it for $j+1$. 

We begin by rectifying $\phi$ and constructing $\phi_j$. Let $\phi'$ contain the order $j$ terms of $\phi$, that is $\phi=\sum_1^{j-1}\phi_i\zeta^i + \phi' + O(\zeta^{j+1})$. We first consider the infinitesimal problem, so we consider the infinitesimal action of for some vector field $\xi=if \zeta\partial_\zeta+X$ of order $j$. Denote by $\phi'_{HH}$, $\phi'_{HV}$, $\phi'_{VH}$ and $\phi'_{VV}$ the various components of $\phi'$ in the decomposition $T_{\mathbb{C}}=\mathbb{C} \zeta\partial_\zeta\oplus H_{\mathbb{C}}$. From the first term in (\ref{eq:26}) we see that we can find $\xi=f i\zeta\partial_\zeta+X$ of order $j$ so that $(\dbar \xi)_{VH}=\phi'_{VH}$ and $(\dbar \xi)_{VV}=\phi'_{VV}$; note that $f$ and $X$ are defined up to fibrewise holomorphic terms, that is up to terms $\zeta^j(f_j i\zeta\partial_\zeta+X_j)$. Thanks to the action of $\xi$ we are now reduced to the case where $\phi'_{VH}=0$ and $\phi'_{VV}=0$, that is $\phi'$ has the form
\begin{equation}
  \label{eq:29}
  \phi' = \tilde \phi + \alpha \otimes i\zeta\partial_\zeta + \phi'_{HH},
\end{equation}
where $\alpha\in H_{0,1}$ and $\phi'_{HH}$ are terms of order $j$. The integrability equation $\dbar \phi'+\frac12[\phi',\phi']=0$ implies
\begin{equation}
  \label{eq:30}
  \bar \zeta \partial_{\bar \zeta}\phi' = O(\zeta^{j+1})
\end{equation}
and in particular $\partial_{\bar \zeta}\alpha=0$ and $\partial_{\bar \zeta}\phi'_{HH}=0$. Defining a horizontal vector field $X$ by $X\lrcorner d\eta=\alpha$ we have from (\ref{eq:26})
\[ \dbar X = \alpha \otimes i\zeta\partial_\zeta + \dbar_H X \]
and therefore the infinitesimal action of $X$ kills the residual term $\alpha$.

Summarizing, we get a vector field $\xi$ of order $j$ which infinitesimally brings $\phi'$ to a purely horizontal $\tilde \phi_j$. Choosing some smooth Kähler metric near $D$ and its exponential, we can now apply the diffeomorphism $\exp(\Re \xi)$ to $(J,\Omega)$: we consider $\Upsilon'=\Upsilon\circ \exp(\Re \xi)$ and the corresponding pair $(\phi',v')$. Since the dominant term of the action is given by the infinitesimal action of $\xi_f$ (the exponential gives only higher order terms) this implies that:
\begin{itemize}
\item $\phi'=\sum_1^{j-1}\phi_j\zeta^j + \tilde \phi_j + O(\zeta^{j+1})$ with $\tilde \phi_j$ purely horizontal;
\item we still have $v'=1+O(\zeta^j)$ since $\xi$ has order $j$.
\end{itemize}
Also note that (\ref{eq:30}) implies that $\tilde \phi_j$ is holomorphic with respect to $\zeta$, that is has the required form $\zeta^j\tilde \phi_j$. So we have proved the induction for $\phi$, and there remains to rectify the function $v$.

One can remark here that since $\xi$ is obtained by solving a problem disk per disk, it a priori has the same regularity as $\phi'$, when one would expect a better regularity (one more derivative on $X$ and two more on $f$). This explains the loss of derivatives alluded to above.

We now must kill the order $j$ term of $v'$. From $\dbar \Omega=0$ and the fact that $\phi$ is purely horizontal up to order $j$, we deduce that the $j$-th order term of $v'$ is holomorphic along the fiber (that is with respect to $\zeta$), which means that it can be written as $v'_j \zeta^j$ for a section $v'_j$ of $L^{-j}$. From the Weitzenböck formula $2\Delta_\partial=\nabla^*\nabla+j$ on sections of $K_DL^{\alpha-j-1}$, we deduce, for a section $f\Omega_D$ of $K_DL^{\alpha-j-1}$,
\begin{equation}
  \label{eq:28}
  - \zeta \partial_\zeta(f \Omega_D) +  2\Delta_\partial (f\Omega_D) = (\nabla^*\nabla+\alpha-1)(f \Omega_D).
\end{equation}
Since $\alpha>1$ the operator $\nabla^*\nabla+\alpha-1$ is an isomorphism, and therefore the term $v'_j\zeta^j$ can be killed infinitesimally by a vector $\xi_f$ for some $f=f_j \zeta^j$ and $f_j\Omega_D$ is a section of $L^{-j}$. We now apply as above the diffeomorphism $\exp(\Re \xi_f)$ to $(J,\Omega)$: we obtain a new pair $(\phi'',v'')$. Since the dominant term of the action is given by the infinitesimal action of $\xi_f$, this implies that:
\begin{itemize}
\item the order $j$ term of $v'$, that is $v'_j\zeta^j$, is killed by $\exp(\Re \xi_f)$, so we have now $v''=1+O(\zeta^{j+1})$;
\item as observed above, the vector field $\xi_f$ infinitesimally preserves the horizontal gauge, it actually modifies the order $j$ term of $\phi'$ by $-\dbar_H \sharp \dbar_H f$ so we get an order $j$ term $\zeta^j(\tilde \phi_j+\dbar_H\sharp\dbar_Hf_j)$.
\end{itemize}
So by choosing $\phi_j=\tilde \phi_j+\dbar_H\sharp\dbar_Hf_j$ we have finished to prove the Proposition for $j+1$.\qed

\begin{prop}\label{prop:def-j0}
  Given the normal form from Proposition \ref{prop:normal-form-2}, the smallest integer $j_0$ such that $\phi_{j_0}\neq0$ is intrinsic. Actually $j_0$ is the largest integer such that there exists a diffeomorphism $\Upsilon:\Delta_L\rightarrow \mathcal{U}_L$ such that $\Upsilon^*\Omega$ and $\Omega_L$ coincide up to order $j_0-1$.
\end{prop}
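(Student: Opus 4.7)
The plan is to prove that $j_0$ coincides with the intrinsic integer
\[
j_0^{\mathrm{int}} := \sup\bigl\{k\in\N : \exists\,\Upsilon\colon\Delta_L\to\mathcal U_L \text{ diffeomorphism with } \Upsilon^*\Omega - \Omega_L = O(\zeta^k)\bigr\},
\]
which settles both assertions of the proposition at once: intrinsicity of $j_0$, and its characterization as the largest order of agreement between $\Upsilon^*\Omega$ and $\Omega_L$ over all diffeomorphisms $\Upsilon$.

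For the lower bound $j_0^{\mathrm{int}} \geq j_0$, I will start from a normal form $\Upsilon$ as in Proposition~\ref{prop:normal-form-2} with first non-zero coefficient $\phi_{j_0}$, invoked at some order $j \geq j_0+1$ so that simultaneously $v = 1 + O(\zeta^{j_0+1})$ and $\phi = O(\zeta^{j_0})$. Writing
\[
\Upsilon^*\Omega - \Omega_L = (v-1)(1-\phi)^*\Omega_L + \bigl((1-\phi)^*\Omega_L - \Omega_L\bigr),
\]
both summands are $O(\zeta^{j_0})$ since the second parenthesis is linear in $\phi$ to leading order. This shows some $\Upsilon$ achieves agreement up to order $j_0-1$.

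The upper bound $j_0^{\mathrm{int}} \leq j_0$ is the core. I will assume that some diffeomorphism $\Upsilon'$ satisfies $\Upsilon'^*\Omega - \Omega_L = O(\zeta^{j_0+1})$ and run the inductive procedure of Proposition~\ref{prop:normal-form-2} starting from $\Upsilon'$, producing a normal form $\Upsilon''$. The crucial check, based on the Lie-derivative formulas \eqref{eq:26}--\eqref{eq:11}, is that every diffeomorphism $\exp(\Re\xi)$ used at stage $i$ alters $\Omega_L$ only from order $i$ upward. Thus the initial agreement $\Upsilon'^*\Omega = \Omega_L + O(\zeta^{j_0+1})$ survives throughout, and the obstructions at each stage $i\le j_0$ must vanish, giving $\phi_i'' = 0$ and $v'' = 1 + O(\zeta^{j_0+1})$. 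Comparing $\Upsilon$ and $\Upsilon''$: both are normal forms with $\phi_i=0$ for $i<j_0$ and $v_{j_0}=0$, so they differ at order $j_0$ by an infinitesimal gauge $\xi_{f_{j_0}}$ with $f_{j_0}$ fiberwise-holomorphic of order $j_0$; such a $\xi_{f_{j_0}}$ modifies $v_{j_0}$ by $(\nabla^*\nabla + \alpha-1)(f_{j_0}\Omega_D)$ and $\phi_{j_0}$ by $\dbar_H\sharp\dbar_H f_{j_0}$, cf.~\eqref{eq:28}. Since both normal forms have $v_{j_0}=0$ and the operator $\nabla^*\nabla + \alpha - 1$ is invertible for $\alpha > 1$, one deduces $f_{j_0}=0$, hence $\phi_{j_0}=\phi_{j_0}''=0$, contradicting $\phi_{j_0}\ne 0$.

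The main technical hurdle will be the bookkeeping in the upper-bound step: one must verify that each diffeomorphism applied in the inductive procedure acts on $\Omega_L$ only from its own order upward, so that the starting $\Omega$-agreement is not destroyed by the lower-stage gauge fixes. This rests on a careful order-matching in the Lie-derivative formula \eqref{eq:11}, together with the invertibility of $\nabla^*\nabla + \alpha - 1$ which propagates uniqueness of the gauge transformations from stage to stage.
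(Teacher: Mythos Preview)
Your overall strategy matches the paper's: establish $j_0^{\mathrm{int}}\ge j_0$ from the normal form, then $j_0^{\mathrm{int}}\le j_0$ by showing that any diffeomorphism achieving $\Omega$-agreement to order $j_0$ would force $\phi_{j_0}=0$ via the uniqueness of the gauge at each step. The lower bound is fine.

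There is, however, a genuine gap in your upper-bound argument. You claim that because each $\exp(\Re\xi)$ at stage $i$ alters $\Omega_L$ only from order $i$ upward, the initial agreement $\Upsilon'^*\Omega=\Omega_L+O(\zeta^{j_0+1})$ ``survives throughout'' the inductive procedure. This implication is false as stated: a nonzero gauge $\xi$ of order $i\le j_0$ pulls back $\Upsilon'^*\Omega$ to $\psi^*\Omega_L+O(\zeta^{j_0+1})$, which agrees with $\psi^*\Omega_L$, not with $\Omega_L$, at order $i$. So the agreement is destroyed unless the gauge at stage $i$ is trivial, and you have not shown that. Correspondingly, your assertion that ``the obstructions at each stage $i\le j_0$ must vanish'' is not justified by what precedes it.

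The missing ingredient is the observation the paper makes explicitly: the holomorphic $n$-form determines the complex structure. Concretely, $\Upsilon'^*\Omega=v'(1-\phi')^*\Omega_L$; taking the $J_L$-$(n-1,1)$ component of $\Upsilon'^*\Omega-\Omega_L=O(\zeta^{j_0+1})$ gives $\phi'\lrcorner\Omega_L=O(\zeta^{j_0+1})+O(\phi'^2)$, and since $\Omega_L$ is nondegenerate this forces $\phi'=O(\zeta^{j_0+1})$ (and then $v'=1+O(\zeta^{j_0+1})$). With this in hand your argument repairs itself immediately: the procedure at stages $i\le j_0$ is vacuous (all vector fields are zero), so $\Upsilon''$ has $\phi_i''=0$ for $i\le j_0$, and your final comparison via the invertibility of $\nabla^*\nabla+\alpha-1$ then coincides with the paper's uniqueness argument.
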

\begin{proof}
  Let $\ell$ be the order of coincidence of $\Omega$ and $\Omega_L$ as defined in the proposition, and $j_0$ the first integer so that $\phi_{j_0}\neq0$. By definition we have $\ell\geq j_0-1$. On the other hand, if $\Omega$ and $\Omega_L$ coincide up to order $\ell$, then the complex structures $J$ and $J_L$ also coincide up to order $\ell$, since they are determined by the holomorphic forms. If $j_0<\ell+1$ it therefore means that our term $\zeta^{j_0}\phi_{j_0}$ can be killed by a diffeomorphism: as it is the first nonvanishing term, it means that it can be killed by an infinitesimal diffeomorphism which also preserves the condition $v=1$. The infinitesimal action of diffeomorphisms was calculated in the proof of Proposition \ref{prop:normal-form-2}, and it turns out that at each step the vector field is unique (to preserve the normal form of $\phi$ one needs $\xi=\xi_f$ with $f$ holomorphic along the fibres; then the condition $v=1$ kills this remaining degree of freedom). Therefore it is impossible to kill the first term by an infinitesimal diffeomorphism, so we must have $\ell=j_0-1$.
\end{proof}

\begin{rem}
\label{rem j0}
The $T^{1,0}_D$-valued $(0,1)$-form $\phi_1\zeta$ is $\dbar$-closed and represent the extension class $\kappa$ of $N_D$ by $T_D$ in $H^1(D, T_D\otimes N_D^{-1})$. In particular, we have $j_0=1$ if, and only if $\kappa\neq 0$, or equivalently, iff the normal exact sequence \eqref{NES} does not split, cf the first paragraph of section~\ref{sec normal}. 
\end{rem}

\subsection{The real volume form}
\label{sec:real-volume-form}

Using Propositions \ref{prop:normal-form-1} and \ref{prop:normal-form-2} we can now give an optimal development of the volume form $\Omega\wedge \overline \Omega$ by comparison with $\Omega_L \wedge \overline{\Omega_L}$. We develop the tensor $\phi$ from Proposition \ref{prop:normal-form-1} in powers of $\zeta$ as
\begin{equation}
  \label{eq:1}
  \phi = \phi_1 \zeta + \phi_2 \zeta^2 + \cdots
\end{equation}
One can identify $\phi_j$ with a section on $D$ of $\Omega^{0,1}_D(T^{1,0}_D\otimes L^{-j})$. This is valid up to some order which we can take large enough for what follows.

Suppose for simplicity that $\phi_1\neq0$. The linearization with respect to $\phi$ of $(1-\phi)^*\Omega_L$ is $\phi\lrcorner \Omega_L$: here $\phi\in \Omega^{0,1}\otimes T^{1,0}$, $\Omega_L\in\Omega^{n,0}$ and $\phi\lrcorner \Omega_L\in\Omega^{n-1,1}$. It follows from Proposition \ref{prop:normal-form-2} that
\[ \Upsilon^*\Omega = \Omega_L - \phi_1\zeta \lrcorner \Omega_L + O( \zeta^2 ). \]
Therefore
\begin{align*}
  \Upsilon^*(\Omega\wedge \overline \Omega) &= \Omega_L \wedge \overline{\Omega_L} + (\phi_1\zeta\lrcorner\Omega_L) \wedge \overline{(\phi_1\zeta\lrcorner\Omega_L)} + O(\zeta^3) \Omega_L\wedge \overline{\Omega_L}\\
  &= \Omega_L\wedge \overline{\Omega_L}\big( 1 - |\phi_1\zeta|^2 + O(\zeta^3) \big).
\end{align*}
The essential feature in this formula is that no linear term in $\phi_1$ appears because $\phi_1\lrcorner\Omega_L$ is of type $(n-1,1)$. This makes possible for the correction term to exhibit a sign which will be important for us.

The same calculation when the first nonzero term is $\phi_{j_0}$ leads to:
\begin{prop}\label{prop:normal-form-3}
  Suppose $\Upsilon$ is given by Proposition \ref{prop:normal-form-2} and that the first nonzero term in (\ref{eq:1}) is of order $j_0$. Then
  \[ \Upsilon^*(\Omega\wedge\overline\Omega) = \Omega_L\wedge \overline{\Omega_L}\big( 1 - |\phi_{j_0}\zeta^{j_0}|^2 + O(\zeta^{2j_0+1}) \big). \]\qed
\end{prop}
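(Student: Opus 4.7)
The plan is to follow the structure of the $j_0 = 1$ sketch preceding the statement, while carefully tracking orders in $\zeta$. First I would invoke Proposition~\ref{prop:normal-form-2} with $j = 2j_0 + 1$ to fix a diffeomorphism $\Upsilon$ for which $\phi = \zeta^{j_0}\phi_{j_0} + O(\zeta^{j_0+1})$ (all lower order $\phi_i$ vanish by definition of $j_0$) and $v = 1 + O(\zeta^{2j_0+1})$; in particular $|v|^2 = 1 + O(|\zeta|^{2j_0+1})$.

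The next step is to expand $(1-\phi)^*\Omega_L$ by $J_L$-bidegree. Since $\phi \in \Omega^{0,1}_{J_L}\otimes T^{1,0}_{J_L}$ acts on $(1,0)$-forms $\alpha$ by $\alpha \mapsto \alpha - \phi^*\alpha$ with $\phi^*\alpha \in \Omega^{0,1}_{J_L}$, expanding the resulting wedge product yields
\[ (1-\phi)^*\Omega_L = \sum_{k=0}^n (-1)^k T_k, \qquad T_k \in \Omega^{n-k,k}_{J_L}, \qquad T_k = O(\zeta^{k j_0}), \]
with $T_0 = \Omega_L$ and $T_1 = \phi \lrcorner \Omega_L = \zeta^{j_0}(\phi_{j_0}\lrcorner\Omega_L) + O(\zeta^{j_0+1})$. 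In the product $\Upsilon^*(\Omega\wedge\overline\Omega) = |v|^2 \sum_{k,l}(-1)^{k+l} T_k\wedge\overline{T_l}$, bidegree counting forces only the diagonal pairings $T_k\wedge\overline{T_k}$ to yield an $(n,n)$-form. Since $T_k\wedge\overline{T_k} = O(|\zeta|^{2kj_0})\,\Omega_L\wedge\overline{\Omega_L}$, the contributions with $k\ge 2$ are absorbed in the $O(|\zeta|^{2j_0+1})$ error (as $4j_0 \ge 2j_0 + 2$ for $j_0\ge 1$).

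It remains to analyze $T_1\wedge\overline{T_1}$. Expanding $T_1$ in $\zeta$ and absorbing cross terms in the same error, the problem reduces to the pointwise algebraic identity
\[ (\psi \lrcorner \Omega_L) \wedge \overline{\psi \lrcorner \Omega_L} = -|\psi|^2 \, \Omega_L\wedge\overline{\Omega_L} \]
for $\psi \in \Omega^{0,1}\otimes T^{1,0}$, applied to $\psi = \phi_{j_0}\zeta^{j_0}$. The only subtle point is this identity: the sign comes from reordering an $(n-1,1)$-form against a $(1,n-1)$-form into standard bidegree order, and the norm $|\psi|^2$ it produces must be shown to coincide with the pointwise norm on $\Omega^{0,1}_D(T^{1,0}_D\otimes L^{-j_0})$ implicit in the statement. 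I would verify it by a direct component computation in coordinates where $\Omega_L = dz^1\wedge\cdots\wedge dz^n$ and $\psi = \psi^a_{\bar b}\, d\bar z^b\otimes \partial_{z^a}$. Combining this with the estimate $|v|^2 = 1 + O(|\zeta|^{2j_0+1})$ from the first step concludes the argument.
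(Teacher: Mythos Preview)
Your proposal is correct and follows essentially the same approach as the paper's sketch: expand $\Upsilon^*\Omega$ via $(1-\phi)^*\Omega_L$, use bidegree to isolate the diagonal contributions in the wedge with its conjugate, and read off the leading correction as $-|\phi_{j_0}\zeta^{j_0}|^2$. You spell out more carefully than the paper does why the cross terms $T_k\wedge\overline{T_l}$ with $k\neq l$ vanish and why one must take $j\geq 2j_0+1$ in Proposition~\ref{prop:normal-form-2} to control $|v|^2-1$, but these are elaborations of the same argument rather than a different one.
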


\section{The Tian-Yau metric and its formal perturbation}
\label{sec:tian-yau-metric}

Fix a Kähler-Einstein metric $\omega_D$ on $D$ such that $\Ric \omega_D=\omega_D$ and a Hermitian metric $h$ on $L=N_D$ such that $i\Theta(L,h)=\frac1{\alpha-1}\omega_D$. Let $\sigma$ be the section of $[D]$ vanishing on $D$, and $\Omega$ the holomorphic volume form on $X\setminus D$, normalized as in section \ref{sec:complex-volume-form} (again, only some power is well-defined). The Tian-Yau metric \cite{TY2} of $(X,D)$ is defined in the following way: there is a unique Ricci flat Kähler metric $\omega_{\rm TY}=dd^c\Phi_{\rm TY}$ on $X\setminus D$, solving the Monge-Ampère equation
\begin{equation}
  \label{eq:4}
  (dd^c\Phi_{\rm TY})^n = i^{n^2} \Omega \wedge \overline \Omega
\end{equation}
and such that near $D$
\begin{equation}
  \label{eq:2}
  \Phi_{\rm TY} \sim |\sigma|^{-2\beta_*}.
\end{equation}
Here it may be necessary to adjust the metric on $L$ by a multiplicative constant in order to obtain exactly (\ref{eq:4}).

The asymptotic Calabi-Yau cone of the Tian-Yau metric is easily described using the formalism of section \ref{sec:reduction-an-ode}: it is the bundle $L$ equipped with the cone metric
\begin{equation}
  \label{eq:3}
  \omega_{{\rm TY},L} = dd^c e^{-\beta_*u}.
\end{equation}
Indeed from (\ref{eq:19}) and (\ref{eq:33}) the holomorphic volume form satisfies
\begin{align}
\notag
  i^{n^2}\Omega_L \wedge \overline{\Omega_L} &= 8b_{n,\alpha} i e^{-n\beta_*u} \eta^{1,0}\wedge \eta^{0,1}\wedge \omega_D^{n-1} \\
\notag &= b_{n,\alpha} e^{-n\beta_*u} du \wedge d^cu \wedge \omega_D^{n-1}\\
  &= b_{n,\alpha} \Xi.  \label{eq:31}
\end{align}
Comparing with (\ref{eq:35}), where $-\phi'_{{\rm TY}, L}(u)=\beta_*e^{-\beta_*u}$, we obtain exactly
\begin{equation}
  \label{eq:34}
  \omega_{{\rm TY},L}^n = i^{n^2}\Omega_L \wedge \overline{\Omega_L}
\end{equation}
which finishes the description of the Calabi-Yau cone. In particular the Calabi-Yau cone has radius $R=2e^{-\frac12 \beta_*u}=2|\sigma_L|^{-\beta_*}$, where $\sigma_L$ is the tautological section of $L$ over $L$.

\subsection{Asymptotic expansion of the Tian-Yau potential}
\label{sec:asympt-expans-tian}

We now use the diffeomorphism $\Upsilon:\Delta_L\rightarrow U_L$ of section \ref{sec:tubul-neighb-d} to give an asymptotic expansion of $\Phi_{\rm TY}$. It will be convenient to distinguish objects on $\Delta_L$ from objects pulled-back by $\Upsilon$ by an index (for example $d^c$ is for the complex structure $J$ on $U_L$ and $d^c_{J_L}$ for $J_L$ on $L$). To simplify notation, we will use Proposition \ref{prop:normal-form-2} as if $j=\infty$ that is as if $\phi$ was purely horizontal at all orders: indeed taking the order $j$ large enough in the Proposition leads only to small error terms which do not play any role, see section \ref{sec:estim-error-obstr} where the error terms are given.

On $L$ we have the exact potential $\Phi_0=|\sigma_L|^{-2\beta_*}=R^2/4$. Because the tensor $\phi$ expressing the difference $\Upsilon^*J-J_L$ is horizontal and $\Phi_0$ depends only on $R$ (or $|\sigma_L|$), it follows that 
\[ dd^c\Phi_0 = dd^c_{J_L}\Phi_0. \]
If we use $\Phi_0$ in equation (\ref{eq:4}), the first error therefore comes from the difference between $\Omega\wedge\overline\Omega$ and $\Omega_L\wedge\overline{\Omega_L}$:
\begin{equation}
  \label{eq:6}
  \frac{(dd^c\Phi_0)^n}{i^{n^2} \Omega\wedge\overline{\Omega} } = 1+|\phi_{j_0}|^2+O(|\sigma|^{2j_0+1}).
\end{equation}
Extend $\Phi_0$ as a potential on $X\setminus D$ and define $\omega_0=dd^c\Phi_0$ and $g=\frac{(dd^c\Phi_0)^n}{i^{n^2} \Omega\wedge\overline{\Omega} }$, so that the expansion of $g$ at infinity is given by the RHS of (\ref{eq:6}). To find the exact solution $\Phi_{\rm TY}$ we must solve the Monge-Ampère equation on $X\setminus D$
\begin{equation}
  \label{eq:5}
  (\omega_0+dd^cf)^n = g \omega_0^n
\end{equation}
Observe that $|\phi_{j_0}|^2=O(|\sigma|^{2j_0})=O(R^{-\frac{2j_0}{\beta_*}})$, and that since $\alpha<n+1$, the decay rate satisfies
\begin{equation}
  \label{eq:9}
  \frac{2j_0}{\beta_*} > 2j_0 \geq 2.
\end{equation}
We can now apply \cite[Theorems 2.1 and 2.4]{CH1}. There are two cases, depending whether the decay rate $\frac{2j_0}{\beta_*}$ allows the Green's function ($R^{-2n+2}$) to be the first perturbation of $\Phi_0$, and an additional limit case:
\begin{itemize}
\item if $\frac{2j_0}{\beta_*} < 2n$, that is if $\alpha\geq1+j_0$, then one can solve (\ref{eq:5}) with $f=O(R^{-\frac{2j_0}{\beta_*}+2})$ (and the corresponding weighted decays for the derivatives);
\item if $\frac{2j_0}{\beta_*} > 2n$, that is if $\alpha<1+j_0$, then one can solve (\ref{eq:5}) with
  \begin{equation}
    \label{eq:7}
    f = \frac a{R^{2n-2}} + O(R^{-2n+2-\varepsilon});
  \end{equation}
\item if $\frac{2j_0}{\beta_*} = 2n$, we have an indicial root of the Laplacian (corresponding to the decay $R^{-2n+2}$ of the Green function) so we need to first kill by hand the term of order $R^{-2n}$ of $g$: we correct the potential by a function $f_0=aR^{-2n+2}\log R+O(R^{-2n+2})$ so that $\omega_0+dd^cf_0$ now satisfies (\ref{eq:5}) with $g-1=O(R^{-2n-\varepsilon})$, then we can proceed as in the previous case.
\end{itemize}
In each case the solution is unique. We will introduce the notation 
\[\nu_0:=\frac{j_0}{\beta_*}\]
 so that the qualitative asymptotics of the solution $f$ depend on the value of $\nu_0$ in relation to $n$. This gives the first part of the following result:
\begin{thm}\label{thm:dev-TY}
  The potential of the Tian-Yau metric on $X\setminus D$ has the expansion
  \begin{equation}
    \label{eq:8}
    \Upsilon^*\Phi_{\rm TY} = \frac{R^2}4 +
    \begin{cases}
      O(R^{-2\nu_0+2}) & \text{ if } \nu_0< n, \\
      a \frac{\log R}{R^{2n-2}}+ O(R^{-2n+2}) & \text{ if }\nu_0=n, \\
      \frac a{R^{2n-2}} + O(R^{-2n+2-\varepsilon}) & \text{ if }\nu_0>n,
    \end{cases}
  \end{equation}
  where $\varepsilon>0$ is small. Moreover in the last cases we have $a>0$.
\end{thm}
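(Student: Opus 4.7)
The plan is to build on the setup just preceding the theorem statement. Transport the model potential $\Phi_0 = R^2/4$ to $X\setminus D$ via $\Upsilon$, set $\omega_0 = dd^c \Phi_0$, and solve the Tian-Yau equation $(\omega_0 + dd^c f)^n = i^{n^2}\Omega\wedge\overline\Omega$ on an end of $X \setminus D$. By Proposition~\ref{prop:normal-form-3},
\[ \frac{i^{n^2}\Omega\wedge\overline\Omega}{\omega_0^n} - 1 \;=\; -|\phi_{j_0}|^2 + O(|\sigma|^{2j_0+1}), \]
which is non-positive at leading order, and not identically zero since $\phi_{j_0}\neq 0$ by definition of $j_0$. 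This definite sign of the source term is the crucial input.

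The three-case existence claim follows from the Conlon-Hein machinery applied to the asymptotically conical manifold $(X\setminus D, \omega_0)$, exactly as sketched just above the theorem. With $\nu_0 = j_0/\beta_*$, the source decays at rate $R^{-2\nu_0}$, to be compared with the cone Green function rate $R^{-2n+2}$. When $\nu_0 < n$, inverting the cone Laplacian order by order in a weighted H\"older space produces $f = O(R^{-2\nu_0+2})$. When $\nu_0 > n$, the source decays strictly faster than the Green kernel, so $f = a R^{-2n+2} + O(R^{-2n+2-\varepsilon})$. The borderline $\nu_0 = n$ corresponds to the indicial root, so one first prescribes the logarithmic correction $aR^{-2n+2}\log R$ to cancel the residual, and then reduces to the previous case.

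The new content is $a > 0$, which I would extract by a global integration identity. Using the telescoping
\[ (\omega_0 + dd^c f)^n - \omega_0^n \;=\; dd^c f \wedge T, \qquad T = \sum_{k=0}^{n-1}(\omega_0+dd^c f)^k\wedge \omega_0^{n-1-k}, \]
the form $T$ is closed since $\omega_0$ and $\omega_0+dd^c f$ are. Integrating over $\{R\le R_0\}\subset X\setminus D$ and applying Stokes yields, after substituting the MA equation,
\[ \int_{\{R \le R_0\}} \left(i^{n^2}\Omega\wedge\overline\Omega - \omega_0^n\right) \;=\; \int_{\{R = R_0\}} d^c f \wedge T. \]
As $R_0\to\infty$ the left side converges to a strictly negative constant by the sign discussed above. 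For $\nu_0 > n$, substitute $f \sim aR^{-2n+2}$ and $T \sim n\,\omega_{{\rm TY},L}^{n-1}$ into the right side: since $d^c R^{-2n+2} = -(2n-2)R^{-2n+1}d^c R$ and $\int_{\{R = R_0\}} d^c R \wedge \omega_{{\rm TY},L}^{n-1}$ is a strictly positive constant (the derivative of the cone volume), the right side evaluates to $-a$ times a strictly positive constant. The identity thus forces $a > 0$. For $\nu_0 = n$, the same argument applied to $f \sim aR^{-2n+2}\log R$ identifies $a$ with a positive multiple of $\int_D |\phi_{j_0}|^2\, \omega_D^{n-1}$.

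The main technical obstacle is justifying the passage to the limit $R_0\to\infty$ in the boundary integral: one must check that the lower-order corrections in $f$ and the quadratic cross-terms inside $T$ contribute boundary terms that vanish at infinity, which follows from the strict improvement in the Conlon-Hein decay exponents (the error $R^{-2n+2-\varepsilon}$ is strictly better than $R^{-2n+2}$). Once this bookkeeping is secured, the positivity $a > 0$ is a clean consequence of the definite sign of $|\phi_{j_0}|^2$ built into Proposition~\ref{prop:normal-form-3}.
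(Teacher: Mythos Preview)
Your derivation of the expansion \eqref{eq:8} is fine and matches the paper's use of the Conlon--Hein machinery. Your treatment of the borderline case $\nu_0=n$ is also correct: comparing the logarithmically divergent parts of the two sides of your Stokes identity determines $a$ as a positive multiple of $\int_D|\phi_{j_0}|^2\omega_D^{n-1}$, which is exactly the paper's local argument recast in integral form.

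The gap is in the case $\nu_0>n$. Your identity
\[
\int_{\{R\le R_0\}}\bigl(i^{n^2}\Omega\wedge\overline\Omega-\omega_0^n\bigr)=\int_{\{R=R_0\}}d^cf\wedge T
\]
is correct, and the right-hand side does converge to $-a$ times a positive constant. But you assert that the left-hand side ``converges to a strictly negative constant by the sign discussed above,'' and this is not justified. Proposition~\ref{prop:normal-form-3} only controls the sign of the integrand \emph{asymptotically}, in the tubular neighbourhood $U_L$; on the compact core of $X\setminus D$ the integrand $i^{n^2}\Omega\wedge\overline\Omega-\omega_0^n$ depends on the (arbitrary) extension of $\Phi_0$ and has no reason to be non-positive. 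The total integral is indeed independent of the extension (by the same Stokes argument), but that does not pin down its sign. A volume-comparison argument of this flavour works for crepant resolutions because of an additional cohomological input; here there is none, and in fact the paper remarks that the sign of $a$ is \emph{opposite} to the crepant-resolution case, which should make you suspicious of any argument that would apply verbatim to both.

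The paper proceeds quite differently for $\nu_0>n$: it uses the Weitzenb\"ock-type identity $\delta\nabla^-du=\tfrac12\Delta u-\Ric u$ applied to $u=d\Phi_{\rm TY}$. Since the Tian--Yau metric is Ricci-flat and $\Delta\Phi_{\rm TY}$ is constant, one gets $\delta\nabla^-d\Phi_{\rm TY}=0$, and integration by parts yields
\[
\int_{X\setminus D}|\nabla^-d\Phi_{\rm TY}|^2 \;=\; a\,\ell
\]
for an explicit positive constant $\ell$. This gives $a\ge0$ directly; the strict inequality $a>0$ then comes from a rigidity step: $a=0$ forces $\nabla\Phi_{\rm TY}$ to be a real holomorphic homothety, which would make $X\setminus D$ isometric to its asymptotic cone, contradicting the standing hypothesis $(X,D)\neq(\mathbb P^n,H)$. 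This global Bochner argument is what your approach is missing.
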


\begin{proof} Here we work with the Tian-Yau metric as background metric. 
  There remains to prove the statement on the sign of $a$. The two cases $\nu_0=n$ and $\nu_0>n$ are different in nature, since the argument is local in the first case, and global in the second case.

In the first case ($\nu_0=n$) the coefficient $a$ is formally determined by the asymptotic terms of equation (\ref{eq:5}): the linearization of the LHS of (\ref{eq:5}) is $-\Delta f$, and on the asymptotic cone $-\Delta(R^{-2n+2}\log R)=(2n-2)R^{-2n+2}$; since $g=1+bR^{-2n+2}+O(R^{-2n+2-\varepsilon})$ with $b>0$ (see (\ref{eq:6}), we have $a>0$.

The second case ($\nu_0>n$) requires more work. For a one form $u$ on $X\setminus D$ denote by $\nabla^-u$ the $J$-anti-invariant part of $\nabla u$:
  \begin{align*}
    (\nabla^-u)_{X,Y} &= \frac12 \big( (\nabla_Xu)_Y - (\nabla_{JX}u)_{JY} \big) \\
    &= -\frac12 \omega\big( (\mathcal{L}_{\sharp u}J)X,Y \big).
  \end{align*}
Here $\sharp$ denotes the Riemannian duality between $TX$ and $T^*X$.
  In particular $\nabla^-u=0$ if and only if $\sharp u$ is a holomorphic vector field. Given a function $f$ the $(0,2)$ part of $\nabla^-df$ is the familiar operator $\dbar \sharp \dbar f$. Moreover one has the Weitzenböck type formula \cite[(2.51) and (2.53)]{Besse}
  \begin{equation}
    \label{eq:10}
    \delta \nabla^-u = \frac12 \Delta u - \Ric u.
  \end{equation}

  The Tian-Yau potential $\Phi=\Phi_{\rm TY}$ satisfies $\Delta\Phi=-4n$ and we deduce from (\ref{eq:10}) that
  \begin{equation}
    \label{eq:12}
    \delta \nabla^- d\Phi = \frac12 \Delta d \Phi = \frac12 d \Delta \Phi = 0.
  \end{equation}
  By integration by parts, denoting $\vec n$ the outward normal vector to the hypersurface $S_R$ of level $R$:
  \begin{align}
    0 &= \int_{X\setminus D} \langle \delta\nabla^-d\Phi , d\Phi \rangle \notag \\
    &= \int_{X\setminus D} |\nabla^-d\Phi|^2 - \lim_{R\rightarrow\infty} \int_{S_R} \nabla^-d\Phi(\vec n, \nabla\Phi) \vec n \lrcorner d\vol. \label{eq:13}
  \end{align}
  From the development (\ref{eq:1}) of the complex structure and the development (\ref{eq:8}) of the potential, and denoting $p=\inf(2n,\frac{j_0}{\beta_*})$, we have:
  \begin{align*}
    g &= g_{cone} + O(R^{-p}), \\
    \vec n &= \partial_R + O(R^{-p}), \\
    \nabla\Phi &= 2R\partial_R + O(R^{-p+1}), \\
    \nabla^-dR^2 &= O(R^{-p}).
  \end{align*}
  Under our hypothesis that $\frac{2j_0}{\beta_*}>2n$, we have that $p>n$ and therefore
  \begin{equation}
    \label{eq:14}
    \lim_{R\rightarrow\infty} \int_{S_R} \nabla^-dR^2(\vec n, \nabla\Phi) \vec n \lrcorner d\vol =
    \lim_{R\rightarrow\infty} \int_{S_R} \nabla^-dR^2(\partial_R, 2R\partial_R) \partial_R \lrcorner d\vol
  \end{equation}
  since the other (quadratic) terms are too small to give a nonzero limit. But the term at order $R^{-j_0/\beta_*}$ of $\nabla^-dR^2$ is linear in $\phi_{j_0}$ and $\overline{\phi_{j_0}}$, so has Fourier coefficients of orders only $\pm j_0$ so its integral on $S_R$ against constant terms vanishes.

  Therefore our boundary term reduces to
  \begin{equation}
    \label{eq:15}
    \lim_{R\rightarrow\infty} \int_{S_R} a \nabla^-dR^{-2n+2}(\partial_R, 2R\partial_R) \partial_R \lrcorner d\vol.
  \end{equation}
  But $\nabla^-dR^{-2n+2}=4n(n-1) \frac{((dR)^2)^-}{R^{2n}} + l.o.t.$ and it follows that the limit in (\ref{eq:15}) is $a\ell$ for some $\ell>0$. Coming back to (\ref{eq:13}) we finally obtain
  \begin{equation}
    \label{eq:16}
    \int_{X\setminus D} |\nabla^-d\Phi|^2 = a \ell
  \end{equation}
  and it follows that $a\geq0$, with $a=0$ if and only if $\nabla^-d\Phi=0$, that is if $\nabla\Phi$ is a (real) holomorphic vector field and an homothety of the metric. Since $\nabla\Phi\sim 2R\partial_R$ we have a cone and $\nabla\Phi$ is the dilation vector field. Considering the asymptotics at infinity, the cone can only be the asymptotic cone of the Tian-Yau metric, that is $\overline L\setminus D$, where $\overline L$ is the one point compactification of $L$ considered in section \ref{sec:about-automorphisms}. This can be smooth only in the case where $X=\mathbb{P}^n$ and $D$ is an hyperplane, which was excluded.
\end{proof}

% \begin{lem}
% If $X\setminus D$ is an  affine algebraic cone, then $(X,D)\simeq (\mathbb P^{n}, H)$ where $H$ is an hyperplane.
% \end{lem}

% \begin{proof}
% By assumption, one can find a projective variety $Y\subset \mathbb P^N$ such that $X\setminus D\simeq C(Y)$ is the cone over $Y$ inside $\mathbb C^{N+1}$. We claim that $Y$ is a linear subspace. Otherwise, one can choose a minimal embedding of $Y$ by homogeneous polynomials of degree at least $2$. In particular, $C(Y)$ is singular at the apex, which is a contradiction since $X$ is smooth.   

% We infer that $X\setminus D \simeq \mathbb C^n=\mathbb P^n\setminus H$ where $H$ is the hyperplane at infinity. In particular, we get a birational map $\phi:X\dashrightarrow \mathbb P^n$. Let $p,q$ be the two projections from a resolution of the graph of $\phi$. Since $\phi$ induces an isomorphism $X\setminus D \simeq \mathbb P^n \setminus H_{\infty}$ and $D$ is irreducible, $p$ and $q$ have the same exceptional divisors hence $\phi$ is isomorphic in codimension one. Let $H':=(\phi^{-1})_*H$ be the strict transform of $H$. Since $\phi$ is isomorphic in codimension one, we  have $K_X \otimes \mathcal O_X((n+1)H')\simeq \mathcal O_X$ away from a set of codimension at least two, hence the isomorphism extends to the whole $X$ by reflexivity. The conclusion now follows from Kobayashi-Ochiai's theorem.
% \end{proof}

\begin{rem}
  The result on the sign of $a$ in this Theorem is opposite to that obtained on crepant resolutions of a finite quotient of $\mathbb{C}^n$, see \cite[Theorem 8.2.3]{Joy00}.
\end{rem}
\begin{rem}
  The proof of Theorem \ref{thm:dev-TY} is reminiscent of the argument in \cite[§ 3.2]{BiqHei23}. The difference is that the argument in \cite{BiqHei23} is in real Riemannian geometry, so formula (\ref{eq:10}) is replaced by $\nabla \Delta f = \Delta \nabla f + \Ric \nabla f$, and the complex Hessian $\nabla^-df$ by the real Hessian of $f$.
\end{rem}

\subsection{The formal perturbation}
\label{sec:formal-perturbation}

The Tian-Yau metric is Ricci flat, but we want to glue it with a metric with $\Ric=\mu$. Given the scale $\varepsilon$, this means we will construct a perturbation $\omega_{{\rm TY},\varepsilon}$ of $\omega_{\rm TY}$ such that $\Ric(\omega_{{\rm TY},\varepsilon})=\varepsilon\mu \omega_{{\rm TY},\varepsilon}$. This perturbation will match better the metric constructed on the normal bundle $L$ and will be needed to successfully perform the gluing later, cf Remark~\ref{rem TY formel nec}.

Since $X\setminus D$ is not compact, this is of course not possible, but it can be done formally up to any order. Denote the development of the potential $\phi_{\beta_*,L}$ near the cone singularity by
\begin{equation}
  \label{eq:23}
  \phi_{\beta_*,L}(x) = \frac14 r^2 + a_2r^4 + a_3r^6 + \cdots
\end{equation}

We want to solve the Kähler-Einstein equation on $X\setminus D$:
\begin{equation}
  \label{eq:24}
  P_\varepsilon(\varphi):=\log \frac{(\omega_{\rm TY}+dd^c \varphi)^n}{i^{n^2}\Omega\wedge \overline \Omega} + \varepsilon\mu(\Phi_{\rm TY}+\varphi)=0.
\end{equation}

\begin{prop}
\label{prop formal expansion}
  For any integer $k$ there is a development $\varphi=\varepsilon\varphi_1+\cdots+\varepsilon^k\varphi_k$ such that
  \begin{enumerate}
  \item there is a formal development $P_\varepsilon(\varphi)=\sum_{i>k} \varepsilon^ip_i$ with $p_i=O(R^{2i})$ (and the same for the weighted derivatives $R^j\nabla^j$); moreover, on any domain $R\leq C\varepsilon^{-\theta/2}$ the series for $P_\varepsilon(\varphi)$ converges if $\varepsilon$ is small enough, and one has an estimate
\[ |R^j\nabla^jP_\varepsilon(\varphi)| \leq C_{k,j} \varepsilon^{k+1}R^{2k+2}. \]
  \item at infinity one has $\varphi_k(y)=a_{k+1}R^{2k+2}+O(R^{2k+2-2\nu_0})+O(R^{-2n+2})$; again the same estimates hold on the weighted derivatives $R^j\nabla^j$.
  \end{enumerate}
\end{prop}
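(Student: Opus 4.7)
The plan is to construct $\varphi_k$ iteratively by matching $P_\varepsilon(\varphi) = 0$ order by order in $\varepsilon$, using the cone Kähler-Einstein potential $\phi_{\beta_*,L}$ as the formal asymptotic model. Since $\omega_{\rm TY}$ is Ricci-flat and satisfies $(dd^c\Phi_{\rm TY})^n = i^{n^2}\Omega\wedge\overline\Omega$, the initial error is $P_\varepsilon(0) = \varepsilon\mu\Phi_{\rm TY} = O(\varepsilon R^2)$. Expanding $P_\varepsilon(\sum_j \varepsilon^j\varphi_j)$ in powers of $\varepsilon$, the coefficient of $\varepsilon^i$ takes the form $\Delta_{\omega_{\rm TY}}\varphi_i + \mu\varphi_{i-1} + N_i(\varphi_1,\ldots,\varphi_{i-1})$ with $\varphi_0 := \Phi_{\rm TY}$, where $N_i$ is a polynomial in the Hessians of the previous $\varphi_j$ coming from the Taylor expansion of the $\log\det$. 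Setting each coefficient to zero produces a Poisson equation $\Delta_{\omega_{\rm TY}}\varphi_k = g_k$ with $g_k$ determined inductively by the previously constructed $\varphi_j$'s.

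The asymptotic shape of $\varphi_k$ will be dictated by the cone solution. Indeed, $\phi_{\beta_*,L}$ solves exactly the analogous Kähler-Einstein equation on the asymptotic Calabi-Yau cone $(L,\omega_{{\rm TY},L})$, and its expansion $\phi_{\beta_*,L} = R^2/4 + a_2R^4 + a_3R^6 + \cdots$ from (\ref{eq:23}) furnishes the formal cone solution. Since $\Delta_{\omega_{{\rm TY},L}} R^{2k+2} = c_k R^{2k}$ for explicit nonzero constants $c_k$, matching orders on the cone forces the polynomial part of $\varphi_k$ to be exactly $a_{k+1}R^{2k+2}$.

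To upgrade the cone model to a genuine solution on $X\setminus D$ with the claimed remainders, I would invoke the Fredholm theory of $\Delta_{\omega_{\rm TY}}$ on weighted Hölder spaces of asymptotically conical manifolds, as developed by Hein and Conlon-Hein and already used in the first part of Theorem~\ref{thm:dev-TY}. For weights that avoid the indicial roots, the Laplacian is Fredholm between the appropriate weighted spaces, so one can produce $\varphi_k$ with the prescribed leading term $a_{k+1}R^{2k+2}$. The subleading correction $O(R^{2k+2-2\nu_0})$ comes from the $|\phi_{j_0}|^2$ deviation of $(dd^c\Phi_0)^n/(i^{n^2}\Omega\wedge\overline\Omega)$ from $1$ in (\ref{eq:6}), propagated through the nonlinear iteration; the $O(R^{-2n+2})$ tail encodes the Green's-function-like freedom in the kernel of $\Delta_{\omega_{\rm TY}}$, used to absorb residual obstructions just as in the construction of $\Phi_{\rm TY}$ itself. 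The borderline case $\nu_0 = n$ contributes a $\log R$ term to $\Phi_{\rm TY}$ that must be carried through the iteration but is harmless for the stated remainders.

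Once $\varphi = \sum_{j=1}^k \varepsilon^j\varphi_j$ is assembled with these asymptotics, the residual $P_\varepsilon(\varphi)$ is formally $O(\varepsilon^{k+1})$ by construction; since each $\varphi_j = O(R^{2j+2})$, dimensional bookkeeping yields $p_i = O(R^{2i})$. On a domain $R \leq C\varepsilon^{-\theta/2}$ with $\theta < 1$ the quantity $\varepsilon R^2$ stays bounded, so the Taylor expansion of $P_\varepsilon$ converges and the estimate $|R^j\nabla^j P_\varepsilon(\varphi)| \leq C_{k,j}\varepsilon^{k+1}R^{2k+2}$ follows from scale-invariant interior Schauder bounds. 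The hard part is the asymptotic matching step: threading the three distinct contributions (polynomial part, $\phi_{j_0}$-induced correction, Green's-function tail) through the nonlinear iteration uniformly in the weight, with careful attention to the borderline indicial roots where logarithmic corrections enter.
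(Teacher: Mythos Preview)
Your proposal is correct and follows essentially the same approach as the paper: inductive resolution in powers of $\varepsilon$, with the leading term $a_{k+1}R^{2k+2}$ read off from the cone expansion (\ref{eq:23}) of $\phi_{\beta_*,L}$, and the remainder obtained by solving $\Delta_{\omega_{\rm TY}}\psi_k=g_k$ via the weighted analysis of \cite{CH1} already used for Theorem~\ref{thm:dev-TY}. The paper makes the first step explicit by writing $\varphi_1=\chi a_2R^4+\psi_1$ with a cutoff $\chi$ near infinity and solving for $\psi_1=O(R^{4-2\nu_0})+O(R^{-2n+2})$, and then says ``continuing this process inductively''; your outline is the same mechanism. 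One small remark: the $O(R^{-2n+2})$ term is not so much ``freedom in the kernel used to absorb obstructions'' as simply the generic decay rate one gets from the Poisson equation when the source decays faster than $R^{-2n}$ (the paper notes this term appears precisely when $2k+2-2\nu_0<-2n+2$).
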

\begin{proof}
  We solve inductively the equation (\ref{eq:24}) in powers of $\varepsilon$. The first equation is
  \[ -\Delta \varphi_1 + \mu \Phi_{\rm TY} = 0. \]
  Because $\phi_{\beta_*,L}$ is an exact solution on the cone, the function $a_2R^4$ is a solution of $\Delta(a_2R^4)=\mu\frac{R^2}4$ up to the order where the equations for $J$ and $J_L$ differ, that is up to the order where $\Omega\wedge \overline \Omega$ and $\Omega_L\wedge \overline \Omega_L$ differ:  so given (\ref{eq:8}) we obtain
  \[ -\Delta (a_2R^4) + \mu \Phi_{\rm TY} = O(R^{2-2\nu_0}). \]
  Therefore we can find $\varphi_1=\chi a_2R^4+\psi_1$ (where $\chi$ is a cutoff function near infinity) by solving $\Delta \psi_1=-\Delta (\chi a_2R^4)+\mu \Phi_{\rm TY}$, which is possible with $\psi_1=O(R^{4-2\nu_0})+O(R^{-2n+2})$. (The second term is present for the case where $4-2\nu_0<-2n+2$). The error term in $P_\varepsilon(\varepsilon\varphi_1)$ is:
  \begin{itemize}
  \item the nonlinear terms in Monge-Ampère: $dd^c(\varepsilon\phi_1)=O(\varepsilon R^2)$ so the nonlinear terms are $O((\varepsilon R^2)^i)$ with $i>1$; applying the $\log$ gives the formal development in $\varepsilon$;
  \item $\varepsilon^2\mu\varphi_1$ which is also $O(\varepsilon^2R^4)$.
  \end{itemize}
  On the domain $R\leq C\varepsilon^{-\theta/2}$, we have $\varepsilon R^2\leq C\varepsilon^{1-\theta}$ which is small when $\varepsilon\rightarrow0$, and it follows that the series of the logarithm converges, which implies the estimate.
  
  Continuing this process inductively we get the proposition.
\end{proof}

\section{Gluing and linear analysis}
\label{gluing}

\subsection{Gluing with the conic Calabi Ansatz and the Tian-Yau metric}
Recall that on $L$, we have the coordinate $u:=\log |v|^2_h$, as well as a Kähler-Einstein potential $\phi_{\beta, L}=\phi_{\beta, L}(u)$ defined for larger and larger values of $u$ as $\beta$ approaches $\beta_*$; more precisely it is defined in the zone $(u\le u_\beta)$ where $u_\beta = -\frac{1}{n\beta_*}\log(\beta-\beta_*)$ up to a constant, cf \eqref{ubeta}. 

In order to glue the potential $\phi_{\beta, L}$ to the Tian-Yau potential via $\Upsilon$, one first needs to rescale $L$ so that most of the domain of $\phi_{\beta, L}$ is included in the domain of $\Upsilon$. We have already identified the correct rescaling $\lambda_\ep : L \to L$ in \eqref{scaling}. It will be convenient to introduce the following notation
\begin{equation}
\label{phi ep}
\phi_{\beta, L, \ep}:=(\lambda_\ep)_*\phi_{\beta, L}
\end{equation}
as well as
\[\Upsilon_\ep:=\Upsilon \circ \lambda_\ep\]
which is defined on larger and larger regions in $L$. Finally, if $\zeta$ is the variable in $L$, we will write 
\begin{equation}
\label{zeta ep}
\zeta_\ep:=\lambda_\ep^* \zeta= \ep^{\frac{1}{2\beta_*}}\zeta.
\end{equation}

%Recall the rescaled potential 
%\[\phi_{\beta, \ep, L}(u):=\phi_{\beta, L}\big(u- \frac{1}{\beta_*}\log \ep\big)=(\lambda_\ep)_*\phi_\beta(u),\]
%cf \eqref{scaling}, is defined on the neighborhood $\Big(u\le \frac {1}{\beta_*}\log \big(\frac{\ep}{(\beta-\beta_*)^{\frac 1n}} \big)\Big)$, which may or may not shrink to the zero section in $L$ as $\ep\to0$ and $\beta\to \beta_*$, depending on their respective convergence rate. 

Moreover, in order for the potential to match the Tian-Yau potential, the gluing needs to be done near $u_\ep$ (i.e. on a zone of the form $|u-u_\ep| \le 1$) satisfying
\[1 \ll u_\ep \ll  -\frac{1}{n\beta_*} \log(\beta-\beta_*),\]
or equivalently in terms of the variable $r=2e^{-\frac 12 \beta_* u}$ 
\begin{equation}
\label{zone beta}
(\beta-\beta_*)^{\frac 1n} \ll r_\ep^2 \ll 1,
\end{equation}
where $r_\ep:=2e^{-\frac 12 \beta_*u_\ep}$. We will choose $r_\ep$ of the form $r_\ep=\ep^{\frac{1-\theta}{2}}$ for some $\theta\in (0,1)$ to be determined later so that 
\begin{equation}
\label{ep re}
\frac{\ep}{r_\ep^2}=\ep^\theta.
\end{equation} 

We fix once and for all a point $x_0\in X\setminus U_L$ and we set $R=d_{g_{\rm TY}}(\cdot, x_0)$. Next, we introduce a positive function $\rho=\rho(\beta, \ep)$ on $X$ by 
  \[\rho^2:=\begin{cases}
-\frac{4}{\beta_*}\phi'_{\beta,L}\circ \Upsilon_\ep^{-1}  &\mbox{on} \quad \Upsilon_\ep(\{u \le u_\ep\})\\
\ep \, R^2  &\mbox{on} \quad \Upsilon_\ep(\{u \ge u_\ep\}\cap \Delta_L)\\
\ep  &\mbox{on} \quad X\setminus U_L
  \end{cases}
  \]
The function $\rho$ has values in $[\sqrt{\ep}, 2\sqrt{\alpha/\beta_*}]$. More precisely, it is equal to $2\sqrt{\alpha/\beta_*}$ on $D$, then "decreases" to reach the value $r_\ep$ near the gluing zone, and from there keeps decreasing to $\sqrt{\ep}$.  \\

We fix two positive integers $j,k$ and we consider the diffeomorphism $\Upsilon$ from Proposition~\ref{prop:normal-form-2} as well as the $k$-th order formal perturbation of the Tian-Yau potential $\Phi_{{\rm TY}, \ep}=\Phi_{\rm TY}+\varphi$ constructed in section~\ref{sec:formal-perturbation}. We will determine suitable values for $j$ and $k$ further in the text.

Let $\chi: [0,+\infty)\to [0,1]$ be a non-increasing function which is identically $0$ on $[0, \frac 12]$ and identically $1$ on $[2, +\infty)$, and let $\chi_\ep : X\to [0,1]$ be defined by $\chi_\ep = \chi(\frac \rho{r_\ep})$, so that
\[\chi_\ep =
\begin{cases}
1 & \mbox{on } \quad  \rho\ge 2 r_\ep \\
0 & \mbox{on } \quad \rho\le \frac 12 r_\ep 
\end{cases}\]
Finally, we introduce the potential
\[\vp_{\beta, \ep}:=\chi_\ep \,  (\Upsilon_\ep)_*\phi_{\beta, L} +(1-\chi_\ep) \, \ep \Phi_{{\rm TY}, \ep}\]
which is well-defined globally on $X$. In the gluing zone, we have
\begin{equation}
\label{pot gluing}
\Upsilon_\ep^*\vp_{\beta, \ep}=\phi_{\beta, L}+(1-\chi_\ep) (\Upsilon_\ep^*(\ep \Phi_{{\rm TY}, \ep})-\phi_{\beta, L}).
\end{equation}

We set $\om_{\beta, \ep}:=dd^c \vp_{\beta, \ep}$, which is a Kähler metric on $X\setminus D$ with cone singularities of angle $2\pi \beta$ along $D$. We denote by $g_{\beta, \ep}$ the corresponding Riemannian metric. \\

We will solve the Kähler-Einstein equation $\Ric(\omega_{\beta,\ep}+dd^c \varphi)=\mu \omega_{\beta,\ep}+ dd^c \varphi$ under the form
\begin{equation}
\label{eq:P}
 P_{\beta,\ep} (\varphi) := \log \frac{(\omega_{\beta,\ep}+dd^c \varphi)^n}{\ep^n i^{n^2}\Omega \wedge \overline \Omega} + \mu(\varphi_\beta + \varphi) = 0,
\end{equation}
where the normalization factor $\ep^n$ comes from the rescaling $\lambda_\ep$. Indeed, on $L$ the constants were choosen so that one has exactly $(dd^c \phi_{\beta,L})^n= e^{-\mu \phi_{\beta,L}} i^{n^2}\Omega_L\wedge \overline{\Omega}_L$, see (\ref{eq:39}) and (\ref{eq:31}); therefore, we get 
\begin{equation}
\label{MA ep}
(dd^c \phi_{\beta,L, \ep})^n=\ep^n e^{-\mu \phi_{\beta, L,\ep}} i^{n^2}\Omega_L\wedge \overline{\Omega}_L
\end{equation} 
with the notation \eqref{phi ep}, cf also \eqref{scale vol} further below. We introduce the notations
\begin{equation}
\label{decomp P}
L_{\beta, \ep}:=\Delta_{\omega_{\beta, \ep}}+\mu_\beta, \quad \mbox{and} \quad Q_{\beta, \ep}:=P_{\beta, \ep}-P_{\beta, \ep}(0)-L_{\beta, \ep}.
\end{equation}
An elementary computation shows that if $|dd^c f|_{\omega_{\beta, \ep}}\le 1$, then 
\begin{equation}
\label{Q}
|Q_{\beta, \ep}(f)|\le C |dd^c f|_{\omega_{\beta, \ep}}^2
\end{equation}
for some $C>0$ independent of $f, \beta, \ep$. 
\subsection{Functional spaces}
Fix a real number $\delta$. For any function $f$ on $X$, we define the weighted norms (which depends on $\beta$ and $\ep$):
\begin{equation}
  \label{norm 1}
   \|f\|_{C^{0}_\delta} = \sup_X \rho^{\delta} |f|, \quad 
 \|f\|_{C^{0,\alpha}_\delta} = \|f\|_{C^{0}_\delta}+ [\rho^{\delta}f]_\alpha
\end{equation}
where the semi-norm $[f]_\alpha$ is also weighted:
\begin{equation}
  \label{norm 2}
  [f]_\alpha = \sup_{d_{g_{\beta,\ep}}(x,y) < \mathrm{inj}} \min\{\rho^{ \alpha}(x), \rho^{\alpha}(y)\} \cdot \frac{|f(x)-f(y)|}{d_{g_{\beta,\ep}}(x,y)^\alpha}.
\end{equation}
As for the $C^{2, \alpha}$ norm, since we will in general be working with cone angles $2\pi \beta >\pi$, we cannot require to control all the second order derivatives but rather only the mixed ones (normal to $D$), as observed by Donaldson \cite{Don}. Therefore we need to distinguish two regions $X=A\cup B$ where $A=(r\ge 1)$ and $B=(r\le 1)$. On $B$, i.e. away from $D$, we set
\begin{equation}
  \label{norm 3}
  \|f\|_{C^{2,\alpha}_\delta(B)} = \sup \sum_{0\leq j\leq 2} \rho^{\delta+j}|\nabla^jf|_{g_{\beta,\ep}} + [\rho^{\delta+2}\nabla^2 f]_\alpha. 
\end{equation}

Now we need to define $ \|f\|_{C^{2,\alpha}_\delta(A)}$; on $A$ the weight function is essentially one so it is irrelevant. The function $\Upsilon^*f|_A$ lives on a smaller and smaller neighborhood of the zero section in $\Delta_L$ as $\ep$ decreases to $0$. Consider the dilation $\lambda_\ep : L\to L$, $ (x,v)\mapsto (x,\ep^{\frac 1{2\beta_*}}v)$. Then $f_\ep:=\lambda_\ep^*\Upsilon^*f|_A$ lives on $\Delta_L$, and we are going to set
\[ \|f\|_{C^{2,\alpha}(A)}:= \|f_\ep\|_{C^{2,\alpha}(\Delta_L)}\]
where the $C^{2,\alpha}(\Delta_L)$ norm is the one defined by Donaldson, whose construction we recall in the paragraph below. Finally, we set
\begin{equation}
  \label{norm 4}
  \|f\|_{C^{2,\alpha}_\delta} = \|f\|_{C^{2,\alpha}(A)}+\|f\|_{C^{2,\alpha}_\delta(B)}.
\end{equation}

\bigskip

In this paragraph, let $h$ he a function on $\Delta_L$. Let $V$ be a  chart $V$ with a holomorphic system of coordinates $(z_1, \ldots, z_n)$ where the zero section $D$ is given by $(z_1=0)$; set $z'=(z_2, \ldots, z_n)$. The flat cone Kähler metric is given by $dd^c (|z_1|^{2\beta}+|z'|^2)$ and the associated Riemannian metric is \[\overline g_{\beta}:=(dr^2+\beta^2r^2 d\theta^2)+g_{\mathbb C^{n-1}}\] where $r:=|z_1|^{\beta}$ and $\theta:= \mathrm{arg}(z_1)$. We consider the derivatives 
\[D_1:=\partial_r, \, D_2:=\frac{1}{\beta r}\partial_\theta \quad \mbox{ and} \quad  D_{2j-1}=\d_{x_j}, D_{2j}=\d_{ y_j}\]
 where $z_j=x_j+iy_j$ for $2\le j \le n$.
The $C^{0,\alpha}$ norm of $h$ is simply 
\[\|h\|_{C^{0,\alpha}(V)}=\sup_V |h|+ \sup_{x,y \in V}\frac{|h(x)-h(y)|}{d_{\gk}(x,y)^\alpha},\]
and we define the $\mathcal C^{2,\alpha}$ norm of $h$ on $V$ as
\[\|h\|_{C^{2,\alpha}(V)}=\sup_V(|h|+|\nabla^{\overline g_\beta}h|)+\sum_{i=1}^{2n}\sum_{j=3}^{2n} \|D_iD_jh\|_{C^{0,\alpha}(V)}+\|\Delta_{\overline g_\beta} h\|_{C^{0,\alpha}(V)}.\] 
Finally, one can cover $\Delta_L$ with finitely many coordinate charts $V_1, \ldots, V_N$, and we set $\|h\|_{C^{2,\alpha}(\Delta_L)}= \sum_{k=1}^N \|h\|_{C^{2,\alpha}(V_k)}$.\\

\begin{prop}
\label{schauder}
Let $L_{\beta,\ep}:=\Delta_{g_{\beta, \ep}}+\mu_\beta$. Then there exists a constant $C>0$ independent of $\beta, \ep$ such that for any $f\in C^{2,\alpha}_\delta(X)$, we have
\[ \|f\|_{C^{2,\alpha}_\delta} \le C\Big( \|f\|_{C^{0}_\delta}+\|L_{\beta, \ep}f\|_{C^{0,\alpha}_{\delta+2}}\Big).\]
\end{prop}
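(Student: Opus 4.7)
The plan is a standard blow-up/contradiction argument in the spirit of \cite{Don}, adapted to the four-parameter family of limit geometries that can appear. Assume the estimate fails: there exist sequences $\beta_k\to \beta_\infty\in [\beta_*,\beta_*+\delta]$, $\ep_k\to 0$, and $f_k\in C^{2,\alpha}_\delta(X)$ with
\[ \|f_k\|_{C^{2,\alpha}_\delta}=1, \qquad \|f_k\|_{C^{0}_\delta}+\|L_{\beta_k,\ep_k}f_k\|_{C^{0,\alpha}_{\delta+2}}\longrightarrow 0. \]
Since the $C^0_\delta$-part tends to $0$, the unit $C^{2,\alpha}_\delta$-norm has to be saturated by the second-order part: I would choose points $x_k\in X$ (and if needed a second point $y_k$ at distance $\le \mathrm{inj}(g_{\beta_k,\ep_k})$ from $x_k$) such that the weighted Hessian or the weighted Hölder seminorm at $x_k$ is bounded below by a fixed constant.

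Let $\lambda_k:=\rho(x_k)$ and rescale: set $\tilde g_k:=\lambda_k^{-2}g_{\beta_k,\ep_k}$ in normal coordinates around $x_k$, and $\tilde f_k:=\lambda_k^{\delta}(f_k-f_k(x_k))$. The weighted hypotheses translate into uniform local $C^{2,\alpha}$-bounds for $\tilde f_k$ on fixed-size balls, and the equation becomes
\[ \bigl(\Delta_{\tilde g_k}+\lambda_k^2\mu_{\beta_k}\bigr)\tilde f_k = \varepsilon_k, \qquad \|\varepsilon_k\|_{C^{0,\alpha}_{\rm loc}}\longrightarrow 0. \]
Arzelà–Ascoli then yields a pointed limit $(M_\infty,g_\infty,x_\infty)$ of $(X,\tilde g_k,x_k)$ and a $C^{2,\alpha'}$-limit $f_\infty$ solving $(\Delta_{g_\infty}+c)f_\infty=0$, where $c=\mu_{\beta_\infty}\lambda_\infty^2$ is either $0$ (if $\lambda_k\to 0$) or $\mu_{\beta_\infty}>0$ (if $\lambda_k$ stays bounded). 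The normalization guarantees that $f_\infty$ is non-trivial (either in $C^0$ or by having nonzero Hessian/Hölder seminorm at $x_\infty$), and the weight forces the decay $|f_\infty|\le C\rho_\infty^{-\delta}$, where $\rho_\infty$ is the limit of $\lambda_k^{-1}\rho$.

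The key input is the classification of possible pointed limits $(M_\infty,g_\infty)$, dictated by the position of $x_k$ with respect to $D$, the cut-off regions defined by $\chi_{\ep_k}$ and by the gluing zone \eqref{zone beta}, and by the fixed compact $X\setminus U_L$. The admissible models are: (a) the flat conic model $\mathbb C_\beta\times \mathbb C^{n-1}$ near a boundary point of $D$ at bounded scale; (b) the Calabi ansatz $(L,g_{\beta_\infty,L})$ when $\lambda_k$ stays bounded and $x_k$ sits in the Calabi region; (c) the Ricci-flat Calabi–Yau cone $(L\setminus\{0\},g_L)$ of \eqref{eq:42} in or near the gluing region; (d) the Tian–Yau manifold $(X\setminus D,g_{\rm TY})$ when $x_k$ remains in a fixed compact of the Tian–Yau region; (e) flat $\mathbb C^n$ away from $D$ and from the apex. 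In cases (a), (b), (d) the rescaling is either trivial or tame, while (c) and (e) arise from genuine blow-ups.

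The hard part, and the bulk of the actual work, will be the matching Liouville theorems on each limit model: one must show that any $f_\infty$ on $M_\infty$ annihilated by $\Delta_{g_\infty}+c$ and with decay $|f_\infty|\le C\rho_\infty^{-\delta}$ must be identically zero. On the flat cone (a) and on flat space (e) this is Donaldson's classical weighted Liouville and requires $\delta$ to avoid the indicial roots of $\Delta$, which on a cone are determined by the spectrum of the cross-section. On the Calabi–Yau cone (c) and on Tian–Yau (d) one uses separation of variables with respect to the $\mathbb C^*$-action on $L$ and the spectrum of $\Delta_{\omega_D}$ on the Kähler--Einstein divisor $D$ to exhibit a discrete set of forbidden weights; the relevant Liouville then follows by standard asymptotically conical analysis à la Hein. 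On the Calabi ansatz $(L,g_{\beta_\infty,L})$ of case (b), which has a conical singularity along $D$ at one end and an asymptotic Tian–Yau cone at the other, I would combine the two indicial analyses: the model $\Delta+\mu_{\beta_\infty}$ has no kernel in the right weighted space once $\delta$ avoids both discrete bad sets, using again that $D$ is Kähler–Einstein. The proposition is then proved with $C$ depending on $\delta$ (and the fixed background data), under the implicit assumption that $\delta$ belongs to the complement of this finite union of forbidden values, a restriction that will be fixed once and for all elsewhere in the paper.
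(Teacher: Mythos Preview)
Your blow-up approach is genuinely different from the paper's, which proceeds by \emph{direct patching} of local Schauder estimates in three zones: near $D$ one invokes Donaldson's conic Schauder (with the uniformity in $\beta$ from \cite{BG22}) after checking that $g_\beta$ is $C^\alpha$-close to the flat cone model via \eqref{exp D}; in the intermediate region $(1\ge\rho\ge \frac12 r_\ep)$ the pulled-back metric is uniformly quasi-isometric to the Ricci-flat cone \eqref{eq:42}, so standard weighted Schauder for asymptotically conical metrics applies; and on the Tian--Yau piece one simply rescales the known weighted Schauder for $g_{\rm TY}$. No contradiction argument and no Liouville theorems are used at this stage.

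Your proposal has a conceptual misalignment. You set $\|f_k\|_{C^0_\delta}\to 0$, which after rescaling forces the limit $f_\infty\equiv 0$ \emph{automatically}: at a point with $\rho=\mu$ one has $|\tilde f_k|\le \lambda_k^\delta(|f_k|+|f_k(x_k)|)=o(1)\bigl((\mu/\lambda_k)^{-\delta}+1\bigr)$. So the decay-type Liouville theorems you describe (kernel of $\Delta_{g_\infty}+c$ in $C^0_{-\delta}$) are never invoked; the actual contradiction in a blow-up Schauder proof must come from showing that the $C^\alpha$-seminorm of $\nabla^2\tilde f_k$ \emph{survives} in the limit while $f_\infty=0$. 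That requires the standard two-scale dichotomy on $d(x_k,y_k)/\rho(x_k)$, which you only allude to. More tellingly, the Liouville statements you list --- and in particular the claim that $\Delta+\mu_{\beta_\infty}$ has no kernel on $(L,g_{\beta_\infty,L})$ in the relevant weighted space --- are precisely the ingredients of the paper's proof of the \emph{next} result, Proposition~\ref{prop:uniform-inverse}, where the kernel is in fact nontrivial (cf.\ Proposition~\ref{prop vp}) and must be quotiented out via the $G$-invariance and $\perp$ conditions. In short, your argument is essentially a sketch of the injectivity estimate rather than of the Schauder estimate.
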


\begin{proof}
We first look at the zone $A=(\rho\ge 1)$ and we need to bound $\|f\|_{C^{2,\alpha}(A)}$. Given the definition of the norms involved, it amounts to proving the equivalent Schauder estimate for the operator $\Delta_{g_\beta}+\mu_\beta$ on $\Delta_L$, uniformly in $\beta$. This has been proved for the flat cone metric (i.e. for $\Delta_{\overline g_\beta}+\mu_\beta$) in \cite{Don} for a given fixed $\beta$, and the uniformity in the cone angle (even when $\beta\to 0$, which is irrelevant here though) has been proved in \cite[Theorem~6.1]{BG22}. It is classical (cf e.g. \cite[Remark~6.8]{BG22}) to reduce the estimate for the operator $\Delta_{g_\beta}+\mu_\beta$ to the one for the flat cone metric by proving that $g_\beta$ is $C^{\alpha}$-close to $\overline g_\beta$.  This, in turn, follows from the explicit description of $\omega_\beta$ in \eqref{omb} and the expansion \eqref{exp D} (use the relation $\rho = e^{\frac 1 2 \beta (u+ \phi_h)}$ where $\phi_h$ is a local weight of the hermitian metric $h$ on $L$).  

Next, in the zone $(1\ge \rho\ge \frac12 r_\varepsilon)$, one can use as before the diffeomorphism $\Upe$ and the problem is equivalent to that with the Calabi metric $dd^c\phi_{\beta, L}$ on a compact subset $K_\varepsilon \Subset L\setminus D$.  Over $K_\varepsilon$, the metric is uniformly comparable to the cone metric (\ref{eq:42}), so weighted Schauder estimates follow.

In the zone $(\rho \leqslant \frac12 \rho_\ep)$, we have $g_{\beta, \ep}=\ep g_{\rm TY}$. The Tian-Yau metric is asymptotically conical hence satisfies the desired Schauder estimate with weight function $R$. Since $\rho \approx \sqrt{\ep} R$ globally on the zone $(\rho\ge \frac 12 \rho_\ep)$, the desired estimate follows from the classical one for the Tian-Yau metric by the definition of our norms. 
\end{proof}

\subsection{Vector fields and eigenfunctions of the Laplacian on the cone $L$} 
We begin with the following elementary computation relating eigenfunctions of the Laplacian for $g_D$ and $g_\beta$. 
\begin{lem}
\label{vp}
Let $g\in C^\infty(D)$ such that $(\Delta_{\omega_D}+1)g=0$. Define for $\beta \ge \beta_*$ the function $f_\beta=\phi_\beta' g$ on an open subset of $L$. Then we have
\[(\Delta_{\omega_\beta}+\mu_\beta)f_\beta=0.\]
\end{lem}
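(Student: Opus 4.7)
The plan is to exploit the product structure $f_\beta = \phi_\beta'(u)\,g$ (with $g$ viewed on $L$ via the pullback along $\pi\colon L\to D$) and reduce the identity to a one-variable computation. First I would derive a decomposition for $\Delta_{\omega_\beta}$ acting on any product-type function $F = p(u)g(x)$. Expanding
\[
 dd^c(pg) = p''g\,du\wedge d^cu + p'g\,dd^cu + p'\bigl(dg\wedge d^cu + du\wedge d^cg\bigr) + p\,dd^cg
\]
and substituting $\omega_\beta = \phi''\,du\wedge d^cu + q\,\omega_D$ with $q := -\tfrac{2\phi'}{\alpha-1}$, together with $dd^cu = -\tfrac{2}{\alpha-1}\omega_D$ (which follows from $\omega_\beta = dd^c\phi(u)$), one computes $n\,dd^cF\wedge\omega_\beta^{n-1}/\omega_\beta^n$ directly. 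Three simplifications arise automatically: $(du\wedge d^cu)^2 = 0$, $\omega_D^n = 0$ for dimensional reasons on $D$, and a base/fiber degree count kills the cross term $p'(dg\wedge d^cu + du\wedge d^cg)$ when wedged with $\omega_\beta^{n-1}$. Normalizing against $\omega_\beta^n = n\phi''q^{n-1}\,du\wedge d^cu\wedge\omega_D^{n-1}$ leaves the compact formula
\[
 \Delta_{\omega_\beta}(pg) = \Bigl(\tfrac{p''}{\phi''} + (n-1)\tfrac{p'}{\phi'}\Bigr)g + \tfrac{p}{q}\,\Delta_{\omega_D}g.
\]

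Next I specialize to $p = \phi_\beta'$ and collapse the radial bracket via the logarithmic $u$-derivative of the Monge--Amp\`ere equation \eqref{MA}. Differentiating the logarithm of $(-\phi')^{n-1}\phi'' = e^{-\mu\phi - (\alpha-1)u}$ gives
\[
 \tfrac{\phi'''}{\phi''} + (n-1)\tfrac{\phi''}{\phi'} = -\mu\phi' - (\alpha-1),
\]
while $\phi'/q = -(\alpha-1)/2$ is immediate from the definition of $q$. Substituting these into the general formula and adding $\mu_\beta\,\phi_\beta' g$ cancels the $\mu\phi'g$ contributions and leaves an expression proportional to a linear combination of $g$ and $\Delta_{\omega_D}g$ whose coefficients are both scalar multiples of $\alpha-1$. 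With the Laplacian normalization consistent with $dd^c = 2i\partial\bar\partial$ in force throughout the paper, this combination is a (nonzero) multiple of $(\Delta_{\omega_D}+1)g$, which vanishes by the hypothesis on $g$.

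The principal conceptual step is the Laplacian decomposition on product-type functions; the rest is an elementary substitution in the differentiated Monge--Amp\`ere equation, together with a bookkeeping check that the numerical coefficient multiplying $\Delta_{\omega_D} g$ matches the Matsushima--Lichnerowicz normalization on $(D,\omega_D)$ under which $(\Delta_{\omega_D}+1)g = 0$ captures the existence of Killing potentials. I expect this latter normalization check to be the only delicate aspect of the proof.
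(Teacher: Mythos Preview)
Your proposal is correct and follows essentially the same route as the paper's proof: both compute $\Delta_{\omega_\beta}$ on a product function $p(u)g$ via the splitting of $\omega_\beta$ in \eqref{omb}, differentiate the logarithm of the Monge--Amp\`ere equation \eqref{MA} to collapse the radial part when $p=\phi_\beta'$, and then use $(\Delta_{\omega_D}+1)g=0$ to kill the residual multiple of $(\alpha-1)$. The paper records the intermediate formula as $\Delta_{\omega_\beta}(\psi g)=\tfrac{\psi''}{\phi''}g+(n-1)\tfrac{\psi'}{\phi'}g-(\alpha-1)\tfrac{\psi}{\phi'}\Delta_{\omega_D}g$ and arrives at $-(\alpha-1)(\Delta_{\omega_D}+1)g$ after substitution; your flagged normalization check is the only place where care is needed, and it is exactly the bookkeeping the paper carries out.
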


\begin{proof}
We omit the index $\beta$ to lighten notation. Let $\psi=\psi(u)$ be a smooth function of $u$. We have 
\[dd^c(\psi g)=g(\psi'' du\wedge d^c u-\frac{2\psi'}{\alpha-1}\omega_D)+\psi dd^c g.\] 
Given \eqref{omb}, we deduce 
\begin{eqnarray}
\label{delta}
\Delta_{\omega_\beta}(\psi g)&=& \frac{\psi''}{\phi''}g+(n-1)\frac{\psi'}{\phi'}g-(\alpha-1)\frac{\psi}{\phi'} \Delta_{\omega_D}g\\
&=&(\frac{\psi''}{\psi \phi''}+(n-1)\frac{\psi'}{\psi \phi'}+(\alpha-1)\frac{1}{\phi'}) \psi g. \nonumber
\end{eqnarray}
Now, \eqref{MA} can be rewritten as 
\[\log \phi''+(n-1)\log \phi'+\mu \phi+(\alpha-1)u=0,\]
and after differentiation, we obtain 
\begin{equation}
\label{DE}
\frac{\phi'''}{\phi''}+(n-1) \frac{\phi''}{\phi'}+\mu \phi'+(\alpha-1)=0.
\end{equation}
Plugging $\psi:=\phi'$ into \eqref{delta} and using \eqref{DE}, we obtain 
\[\Delta_{\omega_\beta}(\phi' g)=-\mu \phi'g\]
as announced. 
\end{proof}

\begin{rem}
\label{radial}
The $\mathbb C^*$ action on $L$ yields another eigenvector of $\Delta_{\omega_\beta}$, which is $\phi_\beta'+\lambda_\beta$. More precisely, the computation above shows that we have $(\Delta_{\omega_\beta}+\mu_\beta)(\phi_\beta'+\lambda_\beta)=0$. 
\end{rem}

\begin{defi}
Let $(f_1, \ldots, f_\ell)$ be a basis of $\{f\in C^{\infty}(D); (\Delta_{g_D}+1)f=0\}$. We define $E\subset C^0(L)$ to be the finite dimensional real vector space
\[E:=\mathrm{Span}\left\{\phi'_{\beta_*}+\lambda_{\beta_*}, \phi'_{\beta_*}f_1, \ldots, \phi'_{\beta_*}f_\ell\right\}.\]
\end{defi}

\begin{prop}
\label{prop vp}
Let $h\in C^{\infty}(L\setminus D)$ be a solution of $(\Delta_{g_{\beta_*}}+\mu_{\beta_*})h=0$ such that $\|h\|_{C^{2,\alpha}_\delta}<+\infty$ for some $\delta\in(0,1)$ small enough and let $v\in H^0(\overline L, T_{\overline L}(-\log D))$. Then 
\begin{enumerate}[label=(\roman*)]
\item The vector field $v_h:=\mathrm{grad}^{1,0}_{g_{\beta_*}}h$ is holomorphic and tangent to $D$. 
\item There exists a unique $h_v$ such that $v=\mathrm{grad}^{1,0}_{g_{\beta_*}}h_v$ and $(\Delta_{g_{\beta_*}}+\mu_{\beta_*})h_v=0$. Moreover, $h_v\in E$. 
\end{enumerate}
The two constructions are inverse to each other, in the sense that $h_{v_h}=h$. 

\end{prop}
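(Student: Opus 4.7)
The proposition is a Matsushima-type correspondence between holomorphic vector fields on $\overline L$ tangent to $D$ and eigenfunctions of $\Delta_{g_{\beta_*}}+\mu_{\beta_*}$ for the Kähler-Einstein cone metric, with $E$ providing an explicit model on the eigenfunction side. My plan is to combine the standard Kähler-Einstein Weitzenböck calculation on the regular locus $L\setminus D$, the Calabi-ansatz formulas of Section~\ref{sec calabi ansatz}, and a dimension count coming from Lemma~\ref{exact} together with classical Matsushima applied to the compact Kähler-Einstein manifold $(D,\omega_D)$.

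For (i), on $L\setminus D$ one has $\Ric \omega_{\beta_*,L}=\mu_{\beta_*}\omega_{\beta_*,L}$, so the standard Bochner--Weitzenböck argument for Kähler-Einstein manifolds shows that $(\Delta+\mu_{\beta_*})h=0$ forces $\bar\partial v_h=0$, and $v_h$ is holomorphic on $L\setminus D$. The weighted bound $\|h\|_{C^{2,\alpha}_\delta}<\infty$ with small $\delta>0$ yields $L^\infty$ control on $v_h$ near both the conic divisor $D$ and the isolated singular point $x_\infty\in\overline L$. Combined with the explicit Calabi-ansatz shape of $\omega_{\beta_*,L}$ (conic along $D$, asymptotically Tian--Yau conical near $x_\infty$), this control suffices to extend $v_h$ as a bounded holomorphic vector field across $D$ by Riemann extension and across $x_\infty$ by normality of $\overline L$. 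Tangency to $D$ comes from the block structure \eqref{omb} of $\omega_{\beta_*,L}$: since $\phi''_{\beta_*}$ vanishes like $e^{\beta_* u}$ along $D$ (cf.~\eqref{exp D}), the inverse metric component in the normal direction blows up there, so the $L^\infty$ bound forces the normal component of $v_h$ to vanish on $D$, which is precisely the log-tangency condition defining $T_{\overline L}(-\log D)$.

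For (ii), I split $v$ via the exact sequence \eqref{ES} of Lemma~\ref{exact} as $v=c\xi+\tilde w$, where $\xi$ is the generator of the $\mathbb C^*$-action on $\overline L$ and $\tilde w$ lifts some $w\in H^0(D,T_D)$. For the radial piece, set $h_\xi:=\phi'_{\beta_*}+\lambda_{\beta_*}$: this is an eigenfunction by Remark~\ref{radial}, and a direct Calabi-ansatz computation using $\bar\partial\phi'_{\beta_*}=\phi''_{\beta_*}\bar\partial u$ together with \eqref{omb} yields $\mathrm{grad}^{1,0} h_\xi=\xi$. For the horizontal piece, classical Matsushima on the compact Kähler-Einstein manifold $(D,\omega_D)$ provides $f$ with $(\Delta_{g_D}+1)f=0$ and $w=\mathrm{grad}^{1,0}_{g_D}f$; then Lemma~\ref{vp} gives $(\Delta+\mu_{\beta_*})(\phi'_{\beta_*}f)=0$, and splitting $\bar\partial(\phi'_{\beta_*}f)=f\phi''_{\beta_*}\bar\partial u+\phi'_{\beta_*}\bar\partial f$ into vertical and horizontal parts and inverting the metric \eqref{omb} shows, after absorbing a universal constant into the normalization of $f$, that $\mathrm{grad}^{1,0}(\phi'_{\beta_*}f)$ is the horizontal lift $\tilde w$. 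Setting $h_v:=c\,h_\xi+\phi'_{\beta_*}f$ then lies in $E$ and satisfies $\mathrm{grad}^{1,0} h_v=v$.

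That the two constructions are mutually inverse follows by comparing: one direction by construction, the other from the fact that $h-h_{v_h}$ is an eigenfunction of $\Delta+\mu_{\beta_*}$ with vanishing $(1,0)$-gradient, hence a constant, hence zero since $\mu_{\beta_*}\neq 0$; uniqueness in (ii) follows identically. The main obstacle I foresee is the extension and tangency arguments of (i) at the two singular loci $D$ and $x_\infty$, both of which hinge on matching the weighted decay of $h$ against the explicit degeneration rates of $\omega_{\beta_*,L}$ recorded in Section~\ref{sec calabi ansatz}; once these are in hand, everything else reduces to the dimension count $\dim E=1+\ell=\dim H^0(\overline L, T_{\overline L}(-\log D))$ (via Matsushima on $D$ and Lemma~\ref{exact}) combined with the explicit formulas above.
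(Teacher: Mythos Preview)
Your treatment of (ii) and the final inverse statement follows the paper's approach almost exactly: split $v$ via the exact sequence of Lemma~\ref{exact}, use Remark~\ref{radial} for the radial piece, and verify by Calabi-ansatz computation that $-\frac1{\alpha-1}\phi'_{\beta_*} h_D$ (your ``$\phi'_{\beta_*}f$ up to a universal constant'') is the potential for the lift of $w$. The paper carries out this last verification by an explicit local coordinate computation of $i_v\omega$.

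The gap is in (i). You invoke the ``standard Bochner--Weitzenb\"ock argument'' to conclude $\bar\partial v_h=0$, but on the noncompact singular space $L\setminus D$ that argument is an integration by parts: pointwise one only obtains $\bar\partial^*\bar\partial v_h=0$ from $\Ric=\mu_{\beta_*}$ and the eigenvalue equation, and to pass to $\bar\partial v_h=0$ one must show that the boundary contributions on exhausting domains $L_\ep=\{\log\ep\le u\le-\log\ep\}$ tend to zero. Near $D$ this is handled by Donaldson's conic theory. Near the cone point $x_\infty$, however, your assertion that the $C^{2,\alpha}_\delta$ bound yields $L^\infty$ control on $v_h$ is incorrect: the weighted norm only gives $|\nabla h|=O(r^{-\delta-1})$ as $r\to0$, which blows up. The paper fixes this with an a priori improvement (Claim~\ref{improvement}): after subtracting the radial eigenfunction $\phi'_{\beta_*}+\lambda_{\beta_*}$ one may assume $h(0)=0$, and since the first positive indicial root $\delta_1$ of the cone Laplacian satisfies $\delta_1>1$, elliptic regularity bootstraps the eigenfunction to $h\in C^{2,\alpha}_{\delta'}$ for $\delta'$ up to $1$, yielding $|\nabla h|=O(1)$ and $|\nabla^2 h|=O(r^{-1})$. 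With this the boundary integral at $r=r_\ep$ is $O(r_\ep^{2n-2})\to0$. This indicial-root improvement is the one genuinely non-formal ingredient in (i), and it is what your outline is missing; the ``extension and tangency'' issues you flag at the end are downstream of it.
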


\begin{proof}

We start by proving the first assertion. It turns out that we'll need to refine the condition $h \in C^{2,\alpha}_\delta(L)$ in the following computations. This is the aim of the following

\begin{claim}
\label{improvement}
One has
\[\sup_{0<r<1} |\nabla h|_{g_{\beta_*}}+ r|\nabla^2 h|_{g_{\beta_*}}<+\infty,\]
where $r=e^{-\frac 12 \beta_* u}$. 
\end{claim}

\begin{proof}[Proof of Claim~\ref{improvement}]
  By Remark~\ref{radial}, $\phi'_\beta+\lambda_\beta$ satisfies the same linear equation as $h$ and clearly it satisfies the bounds in the claim. Therefore, up to replacing $h$ with $h-(\phi'_\beta+\lambda_\beta)$, one can assume that $h(0)=0$. Then the behaviour of $h$ near $r=0$ is governed by the first positive exceptional weight $\delta_1$ of $\Delta$, which satisfies $\delta_1>1$ (see for example \cite[Remark 2.10]{CH1}). It follows that $h\in C^{2,\alpha}_{\delta'}$ for any $\delta'<\delta_1$ and in particular for $\delta'=1$. So $\nabla h \in C^{1,\alpha}_0$ which gives the claim.
\end{proof}
%   Let $g_L$ be the Ricci flat Riemannian cone metric associated to $dd^c r^2$; we have $|g_L- g_{\beta_*}|= O(r^{2})$ with derivatives, hence $\Delta_{g_L} h \in C^{0,\alpha}_\delta$ near $r=0$. 

% We use the inversion $r\mapsto \frac 1r$ on the cone induced by the map $(x,v)\mapsto (x,\frac{v}{|v|^2})$ to make $h$ into a function $\overline h$ satisfying $\Delta_{g_L} \overline h = \frac 1{r^4} (\Delta_{g_L}h)(\frac 1r)$. Next, let $\chi$ be a cut-off function supported in $(r\ge  1)$ which is identically one near $r=+\infty$ and let $\tilde h= \Upsilon^*(\chi h)$ which we extend by $0$ to obtain a function $\tilde h \in C^{2,\alpha}_{-\delta}(X\setminus D)$. Thanks to Theorem~\ref{thm:dev-TY}, we have $|g_{L}-\Upsilon^*g_{{\rm TY}}|=O(r^{-2})$ with derivatives. This implies that $\Delta_{g_{\rm TY}}\tilde h \in C^{0, \alpha}_{4-\delta}(X\setminus D)$. By e.g.  \cite[Theorem~2.9]{CH1}, one can find $\hat h\in C^{2,\alpha}_{2-\delta}(X\setminus D)$ such that $\tilde h-\hat h$ is harmonic. Moreover, there is $\delta_0>0$ such if $\delta<\delta_0$, any harmonic function in $C^{2,\alpha}_{-\delta}(X\setminus D)$ is constant. Since $\hat h\in C^{2,\alpha}_{2-\delta}(X\setminus D)$, this implies that $\sup_{r>1} (r^{3-\delta}|\nabla \tilde h|_{g_{\rm TY}}+r^{4-\delta}|\nabla \tilde h|_{g_{\rm TY}})<+\infty$. Reverting to $h$ yields the claim .

To lighten notation, we will drop the index $\beta_*$ in the following and simply write $v$ for $v_h$. Since $\Ric \om=\mu\omega$, a classical computation shows that 
\begin{equation}
\label{dbarstar}
\dbar^*\dbar v=\mathrm{grad}^{1,0}\left[(\Delta_g+\mu)h\right]=0.
\end{equation}
Moreover, we have the identity
\begin{equation}
\label{stokes}
\left(\langle \dbar v, \dbar v\rangle_g - \langle \dbar^*\dbar v, v\rangle_g  \right) d\mathrm{vol}_g= d(\langle \ast  \dbar v, v\rangle_g),
\end{equation}
where $\ast: \Omega_X^{0,1}\otimes T_X^{1,0}\to \Omega_X^{n,n-1}\otimes T_X^{1,0}$ is the Hodge star operator.\\

For $\ep>0$, we set $L_\ep:=\{\log\ep \le u \le -\log \ep\}$. Integrating \eqref{stokes} over $L_\ep$ and using \eqref{dbarstar} yields
\begin{eqnarray*}
\int_{L_\ep} \|\dbar v \|^2 d\mathrm{vol}_g &=& \int_{\partial L_\ep}\langle \ast  \dbar v, v\rangle_g \\
&=&\int_{u=\log \ep}\langle \ast  \dbar v, v\rangle_g+\int_{u=-\log \ep}\langle \ast  \dbar v, v\rangle_g \\
&=:&I(\ep)+J(\ep)
\end{eqnarray*}
and we now have to show that $I(\ep)$ and $J(\ep)$ go to zero when $\ep\to 0$. Since $g$ is has conic singularities along $D$ and $h$ belongs do Donaldson's $C^{2,\alpha, \beta_*}$ space, the claim for $I$ has already been showed in the proof of \cite[Proposition~8]{Don}. So we now focus on $J(\ep)$. At infinity in $L$, recall from \eqref{equiV} that we have
\[g\approx dr^2+r^2\left( \eta^2+g_D\right)\] 
where $r= e^{-\frac 12 \beta_*u}$. Next, we have
\begin{eqnarray*}
\|\langle \ast  \dbar v, v\rangle\|_g &\le& \|\dbar v\|_g\cdot\|v\|_g\\
&\le &\|\nabla^{0,2} h\|_g\cdot\|\nabla h\|_g \\
&\le & r^{-1}
\end{eqnarray*}
by Claim~\ref{improvement}. On $\{u=-\log \ep\}$, we have $r\simeq r_\ep:=\ep^{\frac{\beta_*}2}$ hence 
\[|J(\ep)|\lessapprox r_\ep^{2n-2}\]
goes to zero as desired. \\

We now move on to proving the second assertion. 
%The $(0,1)$-form $i_v\omega$ is $\dbar$-closed on $L$, hence one can find a smooth function $h$ such that 
%\begin{equation}
%\label{potential}
%i_v\omega=\dbar h
%\end{equation}
%thanks to Lemma~\ref{topo}. Such a function $h$ is unique up to an additive constant since any holomorphic function on $L$ extends to $\overline L$ hence is constant. 
Let $\xi$ be the radial holomorphic vector field. By Lemma~\ref{exact}, we can decompose $v$ as
\[v=a\xi+w\]
where $a\in \mathbb C$ and $w$ is induced by vector field $v_D\in H^0(D, T_D)$; of course we have $w|_{D}=v_D$. We want to find a function $h$ such that $v=\mathrm{grad}^{1,0}h$ or, equivalently, $i_v\omega=\dbar h$. 

Thanks to Remark~\ref{radial}, we have $i_\xi\omega=\dbar \phi'$. Therefore, it is enough to show the claim when $a=0$, i.e. when $v=w$ is induced by a vector field $v_D$ on $D$. By the classical case, one can write $i_{v_D}\om_D=\dbar h_{D}$ for some smooth function $h_D$ on $D$ satisfying $(\Delta_{g_D}+1)(h_D)=0$. Set $h:=-\frac1{\alpha-1}\phi'h_D$; since $(\Delta_{g_{\beta_*}}+\mu_{\beta_*})(h)=0$ by Lemma~\ref{vp}, all we have left to prove is 
\begin{equation}
\label{pot}
i_v\omega=\dbar h.
\end{equation} 
In order to prove \eqref{pot}, let $x\in D$ and let $(w_1, \ldots, w_{n-1})$ be a system of coordinates defined on a open set $U\subset D$. On $U$, $L=-\frac1{\alpha-1} K_D$ is trivialized by $\sigma:=(dw_1\wedge \ldots \wedge dw_{n-1})^{\otimes -\frac 1{\alpha-1}}$ and we get another coordinate $z$ on $p^{-1}(U)$ given by $v=z\sigma$ if $v\in L_x$. On $U$, one can write $v_D=\sum a_i \frac{\partial}{\partial w_i}$. We have $L_{v_D}\sigma =\frac {\mathrm{div}(v_D)}{\alpha-1}\cdot \sigma$ so that the lift of $v_D$ to $p^{-1}(U)$ is
\[v=\frac {\mathrm{div}(v_D)}{\alpha-1}\cdot  z\frac{\partial}{\partial z}+\sum_{i=1}^{n-1} a_i \frac{\partial}{\partial w_i}.\]
At $x$, we can always assume up to doing a linear change of coordinates that the local weight $\varphi$ of the hermitian metric on $-K_D$ satisfies $\varphi(x)=0$ and $d\varphi(x)=0$ so that $\partial u=\frac{dz}{z}$ at $x$. In particular, we have at any point in $p^{-1}(x)$
\begin{equation*}
\omega=\phi'' i\frac{dz}{z}\wedge \frac{d\bar z}{\bar z}-\frac{\phi'}{\alpha-1} \omega_D, \quad \mbox{and} \quad \dbar (\phi'h_D)=\phi''h_D \frac{d\bar z}{\bar z}+\phi' \dbar h_D.
\end{equation*}
Since $i_{v_D}\omega_D=\dbar h_D$, we have $\mathrm{div}(v_D)=-i\Delta_{g_D}h_D=ih_D$. This implies that
\begin{eqnarray*}
i_v\omega&=&\frac {\mathrm{div}(v_D)}{\alpha-1} \phi'' i\frac{d\bar z}{\bar z}-\frac{\phi'}{\alpha-1}i_{v_D}\omega_D\\
&=&\frac{-1}{\alpha-1}\left(\phi''h_D\frac{d\bar z}{\bar z}+\phi'\dbar h_D\right)\\
&=&\dbar h
\end{eqnarray*}
which shows \eqref{pot} at any point in the fiber of $x$, hence everywhere on $L$.\\
 
As for the last assertion, since $h_{v_h}$ and $h$ satisfy $i_{v_h}\omega=\dbar h = \dbar h_{v_h}$, we have that $h-h_{v_h}$ is constant, hence zero since it lies in the kernel of $\Delta_g+\mu$.
\end{proof}

\subsection{Reduction to a one-dimensional problem}
Let $G_D=\mathrm{Isom}^\circ(D, \omega_D)$ be the connected component of the group of holomorphic isometries of the Kähler-Einstein manifold $(D,\omega_D)$. By Matsushima theorem \cite{Mat57}, $G_D$ is a compact connected Lie group such that $G_D^\mathbb C=\mathrm{Aut}^\circ(D)$. 

From now on, we assume that the restriction morphism \eqref{rhoD} is surjective. Thanks to Lemma~\ref{toric}, there exists a surjective, finite étale map $G\to G_D$ of algebraic groups  where $G$ is a (compact) subgroup of $\mathrm{Aut}^\circ(X,D)$. We define the space of $G$-invariant functions $C^{2,\alpha}_{\delta}(X)^G:=\{f \in C^{2,\alpha}_{\delta}(X);\,\, \forall \sigma\in G, \, \sigma^*f=f\}$.

Let $(h_1, \ldots, h_\ell)$ be an orthonormal basis of $\{h\in C^{\infty}(D); (\Delta_{g_D}+1)h=0\}$, and let $\xi_i:=\mathrm{grad}_{g_D}^{1,0}h_i$ be the corresponding holomorphic vector field on $D$. By our assumption, there exists a unique vector field on $X$ tangent to $D$ extending $\xi_i$, cf proof of Lemma~\ref{toric}. We will abusively still denote it by $\xi_i$. 

Next, recall from Remark~\ref{radial} that $\phi_\beta'+\lambda_\beta$ is the potential of the scaling $\mathbb C^*$ action on $(L,\omega_{\beta, L})$. Borrowing the notation from  the beginning of \textsection~\ref{gluing}, we set 
\begin{equation}
\label{tau}
\tau=\tau_{\beta,\ep}:=\chi_\ep (\Upsilon_\ep)_* (\phi'_{\beta, L})+\lambda_\beta
\end{equation}
which is well-defined globally on $X$, and satisfies $\|\tau_{\beta, \ep}\|_{C^{0}_\delta}= \alpha(\alpha+\lambda)$. It is asymptotical in the kernel of $L_{\beta, \ep}$ in the following sense

\begin{lem}
\label{lem tau}
Let  $f \in C^{2,\alpha}_\delta(X)$ and let $\delta\in (0,2n)$, then we have 
\[\langle L_{\beta, \ep} f, \tau_{\beta, \ep} \rangle = O\Big(r_\ep^{2n-\delta} \|f\|_{C^{0}_\delta}\Big)\]
If, moreover, one has $\delta<n-2$ and $|dd^c g|_{\omega_{\beta, \ep}} \le 1$ then we have 
\[\langle Q_{\beta, \ep} (f), \tau_{\beta, \ep} \rangle = 
\begin{cases}
O\Big(\|f\|^2_{C^{2,\alpha}_{\delta}}\Big) & \mbox{if } n>2,\\
O\Big(\ep^{-\delta}\|f\|^2_{C^{2,\alpha}_{\delta}}\Big) & \mbox{if } n=2,
\end{cases}
\]
where the $O()$ are uniform in $f,g,\beta, \ep$. 
\end{lem}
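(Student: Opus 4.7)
Proof plan for Lemma~\ref{lem tau}. For the first bound the strategy is to integrate by parts on the compact variety $X$ so as to transfer the Laplacian onto the ``test function'' $\tau_{\beta,\ep}$:
\[\langle L_{\beta,\ep}f,\tau_{\beta,\ep}\rangle = \langle f, L_{\beta,\ep}\tau_{\beta,\ep}\rangle.\]
Integration by parts is legitimate because $\tau_{\beta,\ep}$ is globally bounded with bounded gradient on $(X,\omega_{\beta,\ep})$ (using the expansion \eqref{exp D} for $\phi'_{\beta,L}$ near the zero section) and $f$ belongs to the Donaldson-type Hölder space with conic regularity at $D$, so the boundary contributions along $D$ vanish as in \cite{Don}. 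It remains to bound $\int_X|f||L_{\beta,\ep}\tau_{\beta,\ep}|\,d\vol$, which I do by decomposing $X$ along the cutoff $\chi_\ep$.

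On the interior $\{\chi_\ep=0\}=\{\rho\le r_\ep/2\}$ we have $\tau_{\beta,\ep}=\lambda_\beta$ constant, so $L_{\beta,\ep}\tau_{\beta,\ep}=\mu_\beta\lambda_\beta$. In this region $\omega_{\beta,\ep}\approx\ep\,\omega_{\rm TY}$ and $\rho\sim\sqrt\ep\,R$ for $R$ the Tian-Yau radius, so
\[\int_{\{\chi_\ep=0\}}\rho^{-\delta}\,d\vol_{\omega_{\beta,\ep}} \sim \ep^{n-\delta/2}\int_1^{r_\ep/\sqrt\ep}R^{2n-1-\delta}\,dR = O(r_\ep^{2n-\delta}),\]
which is valid as long as $\delta<2n$. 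On the transition $\{\tfrac12 r_\ep<\rho<2r_\ep\}$, derivatives of $\chi_\ep$ contribute $O(r_\ep^{-1})$ but these are absorbed by $\phi'_{\beta,L}\sim r_\ep^2$ (recall $\phi'_{\beta,L}=-\tfrac{\beta_*}{4}\rho^2$), so $|L_{\beta,\ep}\tau_{\beta,\ep}|$ remains uniformly bounded; since the transition has $\omega_{\beta,\ep}$-volume $O(r_\ep^{2n})$ and $\rho\sim r_\ep$ there, this region also contributes $O(r_\ep^{2n-\delta}\|f\|_{C^0_\delta})$. On the bulk $\{\chi_\ep=1\}$, Remark~\ref{radial} asserts that $(\Delta_{\omega_{\beta,L}}+\mu_\beta)(\phi'_{\beta,L}+\lambda_\beta)=0$, so $L_{\beta,\ep}\tau_{\beta,\ep}$ only records the discrepancy between $\omega_{\beta,\ep}$ (computed with $J_X$) and $(\Upsilon_\ep)_*\omega_{\beta,L}$ (computed with $J_L$); by Proposition~\ref{prop:normal-form-2}, taking the order $j$ large enough, this discrepancy is $O(\ep^{j_0/(2\beta_*)})$ in operator norm, yielding a contribution smaller than the other two.

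For the second bound I use the pointwise estimate \eqref{Q}: assuming $|dd^cf|_{\omega_{\beta,\ep}}\le 1$,
\[|Q_{\beta,\ep}(f)| \le C|dd^cf|^2_{\omega_{\beta,\ep}} \le C\|f\|^2_{C^{2,\alpha}_\delta}\,\rho^{-2\delta-4}.\]
Combined with $\|\tau_{\beta,\ep}\|_\infty=O(1)$, this reduces the estimate to controlling $\int_X\rho^{-2\delta-4}\,d\vol_{\omega_{\beta,\ep}}$. On $\{\rho\ge r_\ep\}$ (the Calabi-Ansatz and divisor regions) the integral is bounded uniformly because the integrand is polynomially bounded and the region has compact support in $X$ with $\rho$ bounded below by $r_\ep$. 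The dominant contribution sits in the Tian-Yau region:
\[\int_{\{\rho\le r_\ep\}}\rho^{-2\delta-4}\,d\vol \sim \ep^{n-\delta-2}\int_1^{r_\ep/\sqrt\ep}R^{2n-2\delta-5}\,dR.\]
If $n>2$ and $\delta<n-2$, the exponent $2n-2\delta-4$ is strictly positive, the $R$-integral is of order $(r_\ep/\sqrt\ep)^{2n-2\delta-4}$, and multiplying by the prefactor leaves $O(r_\ep^{2n-2\delta-4})=O(1)$. If $n=2$ (forcing $\delta<0$), the $R$-integral produces an extra $\ep$-loss which combines with the prefactor to yield exactly $O(\ep^{-\delta})$, matching the statement.

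The main technical obstacle is the bulk estimate of $L_{\beta,\ep}\tau_{\beta,\ep}$, where we must carefully compare the $J_X$-Laplacian to the $J_L$-Laplacian on a region parametrized by $\Upsilon_\ep$, using the finite-order asymptotic matching of Proposition~\ref{prop:normal-form-2}, and verify that the resulting discrepancy is controlled by the appropriate power of $\ep$ (so that the bulk is indeed dominated by the interior and transition contributions in Part 1). Once this is in place, the remaining work consists of routine weighted-volume estimates on the Calabi-Ansatz and Tian-Yau pieces of the gluing, carried out consistently through the rescaling $\lambda_\ep$.
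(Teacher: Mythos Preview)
Your approach is essentially the paper's: move $L_{\beta,\ep}$ onto $\tau_{\beta,\ep}$ by self-adjointness and estimate $L_{\beta,\ep}\tau_{\beta,\ep}$ zone by zone; for $Q$ use the pointwise bound \eqref{Q} and integrate $\rho^{-2\delta-4}$.

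One simplification you are missing: your ``main technical obstacle'' on the bulk $\{\chi_\ep=1\}$ largely evaporates. Since $\Upsilon^*J_X-J_L$ is purely horizontal up to order $j$ (Proposition~\ref{prop:normal-form-2}) and both $\phi_{\beta,L}$ and $\phi'_{\beta,L}$ depend only on the radial variable $u$, one has $dd^c_{J_X}=dd^c_{J_L}$ on such functions up to $O(|\zeta_\ep|^j)$; compare the remark preceding \eqref{eq:6}. Hence $\omega_{\beta,\ep}=(\Upsilon_\ep)_*\omega_{\beta,L}$ and $\Delta_{\omega_{\beta,\ep}}((\Upsilon_\ep)_*\phi'_{\beta,L})=(\Upsilon_\ep)_*\Delta_{\omega_{\beta,L}}\phi'_{\beta,L}$ up to a residual of order $(\ep/\rho^2)^{j/(2\beta_*)}$, and the paper simply writes $L_{\beta,\ep}\tau_{\beta,\ep}\equiv 0$ there, the residual being negligible for $j$ large. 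Note it is the freely large parameter $j$ of Proposition~\ref{prop:normal-form-2} that enters, not the intrinsic invariant $j_0$.

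A small point on $n=2$: the hypothesis $\delta<n-2$ is only meant for $n>2$; for $n=2$ the paper keeps $\delta>0$ and computes $\int_{\rho\le r_\ep}\rho^{-2\delta-4}\,d\vol\sim\ep^{-\delta}$ directly (the $R$-integral converges at infinity since $2n-2\delta-5=-1-2\delta<-1$, so the prefactor $\ep^{-\delta}$ survives). Your claim that the bulk region $\{\rho\ge r_\ep\}$ contributes $O(1)$ to $\int\rho^{-2\delta-4}$ also fails when $n=2$ and $\delta>0$ (one gets $O(r_\ep^{-2\delta})$), but this is still dominated by $\ep^{-\delta}$, so the stated bound holds.
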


\begin{proof}
Let us start with the first estimate. On $(\rho\ge 2\rho_\ep)$, we have $L_{\beta, \ep}\tau_{\beta, \ep} \equiv 0$ thanks to Remark~\ref{radial}. 

On $(\rho\le \frac 12 \rho_\ep)$, we have $L_{\beta, \ep}\tau_{\beta, \ep}=\mu_\beta\lambda_\beta=O(1)$. Since $f\le \rho^{-\delta}  \|f\|_{C^{0}_\delta}$ and $\int_{\rho \le \frac 12 \rho_\ep}\rho^{-\delta}  d\mathrm{vol}_{\ep g_{{\rm TY},\ep}} \approx  \int_{\sqrt{\ep}}^{\frac 12 \rho_\ep} t^{2n-1-\delta}dt=O(\rho_\ep^{2n-\delta})$, we are done. 

It remains to analyze the contribution of the gluing zone $(\frac 12 \rho_\ep \le \rho \le 2\rho_\ep)$. In that zone, we know from \eqref{psi5} and \eqref{MA2p} that $\phi^{(k)}_{\beta, \ep, L}=O(r^2)$ for any $k\ge 0$, where the derivative is to take with respect to $u$ and $r=e^{-\frac 12 \beta_* u} \approx \rho$. In particular, $d\phi'_{\beta, \ep, L}=O(r^2) du$ and $dd^c \phi'_{\beta, L, \ep}=O(r^2)\cdot (du\wedge d^cu+\omega_D)$. Since $g_{\beta, \ep,L}$ is asymptotic to $dr^2+r^2g_D$, we find $|\nabla \phi'_{\beta, \ep, L}|=O(r)$ and $|\Delta\phi'_{\beta, \ep, L}|=O(1)$. Finally, we have $|\nabla^k \chi_\ep |=O(r_\ep^{-k})$ so that in the end, $|L_{\beta, \ep}\tau_{\beta, \ep}|=O(1)$. One can now conclude with the same arguments as for the previous zone. \\

Let us now consider the integral involving $Q_{\beta, \ep}$; unlike the previous one we will need to look at the normal bundle zone as well. From \eqref{Q}, we have $|Q(f)| \le \rho^{-2\delta-4} \|f\|^2_{C^{2,\alpha}_{\delta}}$ pointwise. The desired estimate now follows from the same computations as in the previous step combined with the fact that the integral $\int_{X} \rho^{-\kappa} \omega_{\beta, \ep}^n$ is uniformly bounded when $\kappa <2n$ while $\int_{\rho \le 2\rho_\ep} \rho^{-2\delta} \omega_{\beta, \ep}^2 =O(\ep^{-\delta})$ when $n=2$. 
\end{proof}

Finally, we introduce 
\[C^{2,\alpha}_{\delta}(X)^\perp:=\left\{f\in C^{2,\alpha}_{\delta}(X); \, \int_X f \tau_{\beta, \ep} \, d \mathrm{vol}_{g_{\beta, \ep}}=0\right\},\]
 set $ C^{2,\alpha}_{\delta}(X)^{G, \perp}:=C^{2,\alpha}_{\delta}(X)^G\cap C^{2,\alpha}_{\delta}(X)^\perp$ and write ${}^\perp: C^{2,\alpha}_{\delta}(X)\to C^{2,\alpha}_{\delta}(X)^\perp$ the orthogonal projection, i.e. $f^\perp=f-\frac{\langle f,\tau_{\beta, \ep} \rangle}{|\tau_{\beta, \ep}|^2}\cdot \tau_{\beta, \ep}$.
%Finally, we set $E^{k}_\delta:=C^{k,\alpha}_{\delta}(X)^G\cap C^{k,\alpha}_{\delta}(X)^\perp$. 

\begin{prop}\label{prop:uniform-inverse}
Fix $0<\delta<1$. There is a constant $C>0$ independent of $\beta, \ep$ (with $|\beta-\beta_*|+\ep \ll 1$) such that for every $f\in C^{2,\alpha}_{\delta}(X)^{G, \perp} $, we have
\[\|f\|_{C^{2, \alpha}_\delta} \le C \|(L_{\beta, \ep} f)^\perp\|_{C^{0,\alpha}_{\delta+2}}.\]
\end{prop}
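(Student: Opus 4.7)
The plan is a standard contradiction/blow-up argument at the two natural scales of the construction. Assume there exist sequences $\beta_i\to\beta_*$, $\ep_i\to 0$ and $f_i\in C^{2,\alpha}_\delta(X)^{G,\perp}$ with $\|f_i\|_{C^{2,\alpha}_\delta}=1$ but $\|(L_if_i)^\perp\|_{C^{0,\alpha}_{\delta+2}}\to 0$, where $L_i:=L_{\beta_i,\ep_i}$. Decomposing $L_if_i=(L_if_i)^\perp+c_i\tau_i$, the $L^2$ norms $\|\tau_i\|^2$ stay bounded below (they converge to $\|\phi'_{\beta_*,L}+\lambda_{\beta_*}\|^2_{L^2(\overline L,\omega_{\beta_*,L})}>0$), so Lemma~\ref{lem tau} keeps $|c_i|$ bounded; combined with Proposition~\ref{schauder} this yields $\|f_i\|_{C^0_\delta}\ge c_0>0$. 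Pick $x_i\in X$ with $\rho(x_i)^\delta|f_i(x_i)|\ge c_0/2$ and distinguish two regimes.

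\emph{Calabi regime, $\rho(x_i)\gtrsim r_{\ep_i}$.} Pull back via $\Upsilon_{\ep_i}^{-1}$: the metrics $\Upsilon_{\ep_i}^*\omega_{\beta_i,\ep_i}$ converge in $C^\infty_{\rm loc}(L\setminus D)$ to $\omega_{\beta_*,L}$ on $\overline L$, and the pull-back of $\tau_i$ converges to $\phi'_{\beta_*,L}+\lambda_{\beta_*}$. Arzelà--Ascoli with weighted Schauder produces a nonzero $G$-invariant limit $\hat f_\infty\in C^{2,\alpha}_\delta(\overline L)$ with $(\Delta_{\omega_{\beta_*,L}}+\mu_{\beta_*})\hat f_\infty=0$; since $\delta\in(0,1)$ sits below the first nontrivial indicial root of the cone Laplacian at $x_\infty$, Proposition~\ref{prop vp} produces $\hat f_\infty=h_v$ for some $v\in H^0(\overline L,T_{\overline L}(-\log D))=\mathbb C\xi\oplus H^0(D,T_D)$ (Lemma~\ref{exact}). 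Passing $\int_Xf_i\tau_i\,d\vol_{g_{\beta_i,\ep_i}}=0$ to the limit yields $\int_{\overline L}\hat f_\infty\,(\phi'_{\beta_*,L}+\lambda_{\beta_*})\,d\vol_{\omega_{\beta_*,L}}=0$, which annihilates the $\mathbb C\xi$-component of $v$; the $G$-invariance of $\hat f_\infty$, hence of $v$, combined with the hypothesis $\mathrm{Aut}^\circ(X,D)\twoheadrightarrow\mathrm{Aut}^\circ(D)$, kills the $H^0(D,T_D)$-component as well. Hence $\hat f_\infty\equiv 0$, contradicting the base-point lower bound.

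\emph{Tian--Yau regime, $\rho(x_i)\lesssim r_{\ep_i}$.} Rescale by $\ep_i^{-1}$ (so that the weight $\rho$ becomes the Tian--Yau radius $R$) and renormalize $f_i$ by $\ep_i^{\delta/2}$: the metric $\ep_i^{-1}\omega_{\beta_i,\ep_i}$ converges locally smoothly to $\omega_{\rm TY}$ on $X\setminus D$, and $\ep_iL_i$ converges to $\Delta_{g_{\rm TY}}$ (the zero-order term $\ep_i\mu_{\beta_i}$ vanishes). Extract a nonzero limit $\tilde f_\infty\in C^{2,\alpha}_\delta(X\setminus D,g_{\rm TY})$ satisfying $\Delta_{g_{\rm TY}}\tilde f_\infty=0$ with $|\tilde f_\infty|=O(R^{-\delta})$ at the asymptotically conical end. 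Liouville on $(X\setminus D,g_{\rm TY})$ in this weight --- valid because $\delta\in(0,1)$ lies below the first positive indicial root of the Laplacian on the Calabi--Yau cone --- forces $\tilde f_\infty\equiv 0$, another contradiction. The transition zone $\rho(x_i)\sim r_{\ep_i}$ is handled by either rescaling, with gluing errors controlled via Theorem~\ref{thm expansion}.

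The single most delicate step is the Calabi case: one must verify that the decay of $\hat f_\infty$ near $x_\infty$ is strong enough to apply Proposition~\ref{prop vp}, that both the orthogonality and the $G$-invariance pass cleanly to the limit, and that together they annihilate the full finite-dimensional kernel of $L_{\beta_*,0}$ on $\overline L$. This last point is precisely where the surjectivity of $\mathrm{Aut}^\circ(X,D)\to\mathrm{Aut}^\circ(D)$ is essential: it lets the equivariance be imposed at the level of $X$ rather than only at the level of $D$, so that all of $H^0(D,T_D)$ is reached and not just the center of the isometry Lie algebra of $(D,\omega_D)$.
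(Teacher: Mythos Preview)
Your overall scheme (contradiction, Schauder reduction to a $C^0_\delta$ lower bound, blow-up at the concentration point) is exactly the paper's, and your handling of the two extreme regimes is correct in spirit. But your dichotomy at the scale $r_{\ep_i}$ leaves a genuine hole.

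In your ``Calabi regime'' you assert that Arzelà--Ascoli produces a \emph{nonzero} limit $\hat f_\infty$ on $\overline L$. That is only true if the base points $x_i$ stay in a fixed compact subset of $L\setminus D$, i.e.\ if $\rho(x_i)\ge\eta>0$. Nothing prevents $\rho(x_i)\to 0$ while still $\rho(x_i)\gtrsim r_{\ep_i}$ (since $r_{\ep_i}\to 0$); then the pulled-back base point drifts to the cone apex $x_\infty$ and the limit $\hat f_\infty$ can very well vanish identically, even though $\rho(x_i)^\delta|f_i(x_i)|$ stays bounded below. Symmetrically, in your ``Tian--Yau regime'' $\rho(x_i)\lesssim r_{\ep_i}$, the rescaled radius $R(x_i)=\rho(x_i)/\sqrt{\ep_i}$ can tend to $+\infty$, so the rescaled base point escapes to the asymptotic end of $(X\setminus D,g_{\rm TY})$ and again the limit may be zero. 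Your one-line remark about the ``transition zone $\rho(x_i)\sim r_{\ep_i}$'' does not address this; the issue is not the single scale $r_{\ep_i}$ but the whole range where $\rho(x_i)\to 0$ and $\rho(x_i)/\sqrt{\ep_i}\to\infty$.

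The paper closes this gap with a \emph{third} case: rescale the metric by $\rho(x_i)^{-2}$ and the function by $\rho(x_i)^\delta$, so that the limit lives on the exact Ricci-flat cone $(L^\times,\omega_{{\rm TY},L})$ and satisfies $\Delta_{\rm cone}\hat f=0$ with $\sup r^\delta|\hat f|<\infty$. One then rules out nontrivial $\hat f$ by comparison with the Green function $r^{2-2n}$ (maximum principle), using $0<\delta<2n-2$. You need this third model.

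Two smaller points. First, Lemma~\ref{lem tau} actually gives $c_i\to 0$ (not just bounded), and you need $c_i\to 0$ to conclude $\|L_if_i\|_{C^{0,\alpha}_{\delta+2}}\to 0$ and hence, via Schauder, a uniform lower bound on $\|f_i\|_{C^0_\delta}$; merely bounded $c_i$ would not suffice. Second, your sentence ``the $G$-invariance of $v$ \ldots kills the $H^0(D,T_D)$-component'' is too quick: $G$-invariance alone only forces $v$ into the centralizer of $\mathfrak g$, which may be nontrivial. The paper instead restricts to $D$ and pairs against each $\xi_i=\mathrm{grad}^{1,0}h_i$ to extract the individual coefficients $a_i$; you should do the same (this is also where the lifting hypothesis $\mathrm{Aut}^\circ(X,D)\twoheadrightarrow\mathrm{Aut}^\circ(D)$ is really used, since the $G$-invariance of $f_i$ is imposed on $X$).
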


\begin{proof}
We argue by contradiction. We can extract a sequence $(\beta_j, \ep_j)\to 0$ and find functions $f_j\in C^{2,\alpha}_{\delta}(X)^G\cap C^{2,\alpha}_{\delta}(X)^\perp$ such that $\|f_j\|_{C^{2, \alpha}_\delta}=1$ but satisfying $\|(L_{\beta, \ep} f_j)^\perp\|_{C^{0,\alpha}_{\delta+2}}\to 0$ when $j\to +\infty$. 

Thanks to the first item in Lemma~\ref{lem tau}, we have $\langle L_{\beta, \ep}f_j, \tau_{\beta, \ep}\rangle \to 0$, hence  $\|L_{\beta, \ep} f_j\|_{C^{0,\alpha}_{\delta+2}}\to 0$ since $L_{\beta, \ep} f_j = (L_{\beta, \ep} f_j)^\perp+\frac{\langle f_j, L\tau_{\beta, \ep}\rangle}{|\tau_{\beta, \ep}|}\tau_{\beta, \ep}$. By Proposition~\ref{schauder}, $\|f\|_{C^{0}_\delta}$ does not converge to zero. For simplicity, one will assume that $\|f\|_{C^{0}_\delta}=1$ and we choose $x_j\in X$ such that $r^\delta |f_j|(x_j)=1$. \\

{\it Case 1. } $\rho(x_j)>\eta>0$.

Using $\Upsilon_\ep$, one can view the $f_j$ as functions on larger and larger open subsets of $L$, with controlled $C^{2,\alpha}$ bound with respect to $g_{\beta, L}$. Since $\rho(x_j)\ge \eta$, the points $x_j$ belong to a compact set of $L$ and one can assume that  $(x_j)$ converges to $x_\infty\in L$. Thanks to Arzela-Ascoli theorem, one can extract a limit function $f\in C^{2,\alpha}_\delta(L)$ with respect to the cone metric $g_{\beta_*}$ satisfying $f(x_\infty)\neq 0$ and $(\Delta_{g_{\beta_*}}+\mu_{\beta_*})(f)=0$. From Proposition~\ref{prop vp}, it follows that there are constants $a_0, \ldots, a_\ell \in \mathbb R$ such that 
\[f=a_0(\phi'_{\beta_*}+\lambda_{\beta_*}) +\sum_{i=1}^\ell a_i h_i.\]

Since $f_j$ is $G$-invariant, we have $L_{\xi_i} f_j=0$ on $X$ for any $i=1, \ldots, \ell$. Since $\xi_i$ is tangent to $D$, one can restrict the previous identity to $D$ and let $j\to +\infty$. It yields $L_{\xi_i} (f|_D)=0$ on $D$. Since $\xi_i=\mathrm{grad}^{1,0}h_i$, we have 
\[\int_D L_{\xi_i} f d\mathrm{vol}_{g_D}=\sum_{k=1}^\ell a_k \langle \mathrm{grad}^{1,0} h_i,  \mathrm{grad}^{1,0} h_k\rangle= -\sum_{k=1}^\ell a_k \langle h_i, \Delta h_k \rangle=a_i\]
since $\Delta h_k=-h_k$. In particular, we get $a_i=0$ for all $i=1,\ldots, \ell$ so that $f=a_0(\phi'_{\beta_*}+\lambda_{\beta_*})$.

Next, the same computations as in the proof of Lemma~\ref{tau} show that  
\[\Big|\int_{r\le 2r_\ep} f_j \tau_{\beta_j, \ep_j} d\mathrm{vol}_{g_{\beta_j,\ep_j}}\Big| \le C\int_{r\le 2r_\ep} r^{-\delta} d\mathrm{vol}_{g_{\beta_j,\ep_j}}=O(r_\ep^{2n-\delta}).\]
Since $f_j$ is orthogonal to $\tau_{\beta_j, \ep_j}$, it follows from the dominated convergence theorem that $\int_L f(\phi'_{\beta_*}+\lambda_{\beta_*}) d\mathrm{vol}_{g_{\beta_*}}=0$.  Therefore, we get $a_0=0$, hence $f\equiv 0$, a contradiction. \\

{\it Case 2. } $1\gg \rho(x_j) \gg \sqrt{\ep}$. The set $\Upe^{-1}(U_L)$ is of the form $\{+\infty \ge r \ge \sqrt \ep\}$ with the notation of \eqref{r rho}. The set $\Upe^{-1}(1\gg \rho \gg \sqrt{\ep})$ in $L$ is simply $(1 \gg r \gg \sqrt \ep)$. We claim that in that zone, $\Upe^*g_{\beta, \ep}$ is asymptotic to $g_{{\rm TY}, L}$ whose Kähler form is $\omega_{{\rm TY}, L}=dd^c \frac {r^2}{4}$. Let us briefly justify the claim in each of the three relevant zones. In the Tian-Yau part (i.e. when $g_{\beta, \ep}=\ep^2 g_{{\rm TY}, \ep}$), it follows from Theorem~\ref{thm:dev-TY} since the first order asymptotics of the formal perturbation are the same as the ones for the genuine Tian-Yau metric by Proposition~\ref{prop formal expansion}. On the normal bundle part (i.e. before the gluing) the claim follows from  \eqref{expansion4} since $dd^c \phi_{\beta_*, L}$ has the same asymptotic cone as $\omega_{{\rm TY}, L}$. Finally, in the gluing zone, it follows from \eqref{order 2} below. 

Now, set $r_j:=\rho(x_j)$, $\hat r:=\frac{r}{r_j}$ and consider the function $\hat f_j:= \Upe^*(r_j^{\delta}f_j|_{U_L})$ on $(\frac{1}{r_j} \gg \hat r \gg \frac{\sqrt \ep}{r_j})$. It is bounded in the usual weighted $C^{2,\alpha}$ norm with respect to the metric $ \Upe^*(r_j^{-2}g_{\beta, \ep})$ (whose Kähler form is asymptotic to $dd^c \frac{\hat r^2}{4}$)  and the weight $\hat r$. Moreover, it satisfies  $\Delta_{ \Upe^*(r_j^{-2}g_{\beta, \ep})}  \hat f_j + r_j^2\mu_\beta \hat f_j \to 0$ in $C^\alpha_{\rm loc}(L^\times)$ in the usual sense. By  Arzela-Ascoli, $\hat f_j$ converges (up to extracting a subsequence) in the usual $C_{\rm loc}^2$ topology on $L^\times$ to a non-zero function $f$ satisfying
\[\Delta_{g_{{\rm TY}, L}} f=0 \quad \mbox{and} \quad \sup _{L^\times}r^\delta |f|<\infty.\]
 Let $\ep>0$; since $\delta\in (0,1)$ and $n>1$, the function $\pm f-\frac{\ep}{r^{2n-2}}$ is harmonic and tends to $-\infty$ near the apex $r=0$ and $0$ when $r\to +\infty$. By the maximum principle, it is non-positive hence $\pm f \le \frac{\ep}{r^{2n-2}}$. Since this holds for any $\ep>0$, we infer that $f\equiv 0$, hence the desired contradiction. \\

{\it Case 3. } $ \rho(x_j) =O( \sqrt{\ep})$.

In this case, $x_j$ belongs to a fixed compact subset of $X\setminus D$ and $\rho(x_j) \approx \sqrt \ep_j$. Thanks to Arzela-Ascoli theorem, a diagonal extraction argument allows one  to find a subsequential non-zero limit $f$ of the sequence of functions $(\ep_j^{\frac \delta 2} f_j)$ on $X\setminus D$ for the $C^{2,\alpha}_{\rm loc}$ topology. Moreover, $f$ satisfies $\sup_{X\setminus D} \rho^{\delta} |f|<+\infty$ as well as $\Delta_{g_{\rm TY}}f=0$. By the maximum principle, we must have $f\equiv 0$, a contradiction.
\end{proof}

\section{Estimating the error and the obstruction}\label{sec:estim-error-obstr}

\subsection{The error}
Let us start by estimating $P_{\beta, \ep}(0)$, where $P_{\beta, \ep}$ is defined in \eqref{eq:P}. There are three zones: \\

$\bullet$ {\it Zone} $(\rho \ge 2r_\ep)$. 

\smallskip
\noindent
%Via $\Upsilon$, we pull everything back to $L$ and we scale back to work with the potentials $\phi_{\beta, L}$. The rescaling has the effect to transform the variable $\zeta$ in $L$ into $\zeta_\ep= \ep^{\frac{1}{2\beta_*}}\zeta$.  
Since $\Upsilon^*J_X-J_L$ is horizontal up to order $j$ and $\phi_{\beta, L}$ only depends on $u$, an elementary computation shows that the $(1,1)$-component of $\Upsilon_\ep^*dd^c\varphi_{\beta, \ep}- dd^c \varphi_{\beta, L}$ is dominated by a multiple of 
\[|\zeta_\ep|^j\big[(\phi_{\beta, L}''-\phi_{\beta, L}') du \wedge d^cu -\phi_{\beta, L}' \omega_D\big],\]
where $\zeta_\ep$ is defined in \eqref{zeta ep}.  Given \eqref{omb}, we have 
\[\Upsilon_\ep^*(dd^c\varphi_{\beta, \ep})^n=(dd^c \phi_{\beta, L})^n \Big(1+ O\Big(|\zeta_\ep|^j \Big(\big|\frac{\phi_{\beta, L}'}{\phi_{\beta, L}''}\big|+\big|\frac{\phi_{\beta, L}''}{\phi_{\beta, L}'}\big|\Big)\Big)\Big). \]
Now, the ratio $\frac{\phi_{\beta, L}''}{\phi_{\beta, L}'}$ is globally bounded while $-\frac{\phi'_{\beta, L}}{\phi_{\beta, L}''}$ is uniformly bounded away from $u=-\infty$ and behaves like $e^{-\beta u}=|\zeta|^{-2\beta}$ near $D$, cf \eqref{exp D}. Given \eqref{eq:31} and \eqref{OmegaL}, we have
\begin{equation}
\label{scale vol}
\lambda_\ep^*(\Omega_L\wedge \overline \Omega_L)= \ep^{-n} \Omega_L\wedge\overline \Omega_L
\end{equation}
so that Proposition~\ref{prop:normal-form-3} yields
\begin{equation}
\label{scale vol 2}
\Upsilon_\ep^*(\ep^n\Omega \wedge \overline \Omega)= \Omega_L\wedge\overline \Omega_L \big(1-|\phi_{j_0} \zeta_\ep^{j_0}|^2+O(\zeta_\ep^{2j_0+1})\big)
\end{equation}
Plugging \eqref{scale vol 2} into \eqref{MA ep}, we obtain
\begin{eqnarray}
\Upsilon_\ep^*P_{\beta, \ep}(0)&=&|\phi_{j_0}\zeta_\ep^{j_0}|^2+O(\zeta_\ep^{2j_0+1}) +  O(|\zeta_\ep|^{j-2}) \label{estimee 1}\\
&=& |\phi_{j_0}|^2 \cdot |\zeta_\ep|^{2j_0}+O(\zeta_\ep^{2j_0+1}) \nonumber
\end{eqnarray}
for $j> 2(j_0+1)$. We choose such a $j$ from now on.  In terms of the coordinate $u$, we have $\Upsilon_\ep^*P_{\beta, \ep}(0)=O(\ep^{\frac{j_0}{\beta_*}}e^{j_0u})=O(\big(\frac{\ep}{r^2}\big)^{\nu_0})$. For any $\delta<2\nu_0$, the weighed supremum of the latter quantity is attained at $r_\ep$, i.e. 
\begin{equation}
\label{estimee 2}
\sup_{\rho \ge 2r_\ep} \rho^\delta |P_{\beta, \ep}(0)| \lessapprox r_\ep^\delta \ep^{\theta \nu_0} .
\end{equation}
and the same bound holds as well for the $C^{0,\alpha}_{\delta}$ norm.

\medskip

$\bullet$ {\it Zone} $(2r_\ep \ge \rho \ge \frac 12r_\ep)$. 

\smallskip

\noindent
Given \eqref{pot gluing}, we have to evaluate $\Upsilon_\ep^*(\ep \Phi_{{\rm TY}, \ep})-\phi_{\beta, L}$. Since $\lambda_\ep^*(\ep R^2)=r^2$, the second item of Proposition~\ref{prop formal expansion} yields
{\small
\begin{eqnarray}
\Upsilon_\ep^*(\ep\Phi_{{\rm TY}, \ep})&=&\Upsilon_\ep^*(\ep\Phi_{\rm TY}) +\sum_{j=2}^k\big[ a_jr^{2j}+O(r^{2j} \big( \frac{\ep}{r^2}\big)^{\nu_0})+O(\ep^{j}\big( \frac{\ep}{r^2}\big)^{n-1})+O(r^{2k+2})\big] \nonumber \\
&=&\Upsilon_\ep^*(\ep\Phi_{\rm TY})+\Big(\sum_{j=2}^k a_jr^{2j}\Big)+O(r^{4} \big( \frac{\ep}{r^2}\big)^{\nu_0})+O(\ep r^{2}\big( \frac{\ep}{r^2}\big)^{n})+O(r^{2k+2}) \label{YeTY}\\
&=&(\Upsilon_\ep^*(\ep\Phi_{\rm TY})-\frac{r^2}4)+\phi_{\beta_*}(r)+ O(r^{4} \big( \frac{\ep}{r^2}\big)^{\nu_0})+O(\ep r^{2}\big( \frac{\ep}{r^2}\big)^{n})+O(r^{2k+2}) \nonumber.
\end{eqnarray}
}
Now, Theorem~\ref{thm:dev-TY} says that 
  \begin{equation}
    \label{YTY}
    \Upsilon_\ep^*(\ep \Phi_{\rm TY}) = \frac{r^2}4 +
    \begin{cases}
      O(r^2\big(\frac {\ep}{r^2}\big)^{\nu_0}) & \text{ if } \nu_0< n, \\
      ar^2 \big(\frac {\ep}{r^2}\big)^{n}\big(\log \frac{r}{\sqrt \ep}+O(1)\big)\\
      ar^2 \big(\frac {\ep}{r^2}\big)^{n} + O(r^2 \big(\frac {\ep}{r^2}\big)^{n+\sigma}) & \text{ if }\nu_0>n,
    \end{cases}
  \end{equation}
  where $\sigma>0$ is small. The error terms in \eqref{YTY} are larger than the ones in \eqref{YeTY}, at least in the gluing zone and for $k$ large enough. 

Combining \eqref{YTY} with \eqref{expansion2}, one infers that in the gluing zone 
{\small
\begin{equation}
\label{diff}
\Upsilon_\ep^*(\ep \Phi_{{\rm TY}, \ep})-\phi_{\beta, L}=
    \begin{cases}
      -a_{L}\frac{\beta-\beta_*}{r^{2n-2}}+O(r^2\big(\frac {\ep}{r^2}\big)^{\nu_0}+\frac{\beta-\beta_*}{r^{2n-2}} \cdot \delta(r))   & \text{ if } \nu_0< n, \\
      \frac{ a \ep^n \log (\frac{r}{\sqrt\ep})-a_L(\beta-\beta_*)}{r^{2n-2}}+O(\frac{\ep^n}{r^{2n-2}}+\frac{\beta-\beta_*}{r^{2n-2}} \cdot \delta(r)) & \text{ if } \nu_0= n,\\ 
        \frac {a\ep^n-a_{L}(\beta-\beta_*)}{r^{2n-2}} + O(r^2 \big(\frac {\ep}{r^2}\big)^{n+\sigma}+\frac{\beta-\beta_*}{r^{2n-2}} \cdot \delta(r)) & \text{ if }\nu_0>n,
    \end{cases}
   \end{equation}
    \small}
   where 
   \begin{equation}
   \label{delta r}
   \delta(r):=r^2+\frac{\beta-\beta_*}{r^{2n}},
   \end{equation}
   and the same estimates hold as well for the weighed derivatives $r^j \nabla^j$.  Next, the tensors $r^j\nabla^j \chi_\ep$ are bounded for any $j$. In light of \eqref{pot gluing} and relying again on the fact that $J-J_L$ can be chosen horizontal up to an arbitrary large order, we infer the coarser estimate   \begin{equation}
   \label{order 2}
   \|\Upsilon_\ep^*\omega_{\beta, \ep}-\omega_{\beta, L}\|_{\omega_{\beta, L}}=
    \begin{cases}
      O( \frac{\beta-\beta_*}{r_\ep^{2n}}+\ep^{\theta \nu_0}  )   & \text{ if } \nu_0< n, \\
     O( \frac{\beta-\beta_*}{r_\ep^{2n}}+\ep^{\theta n} \log \frac 1 \ep) & \text{ if } \nu_0= n, \\
        O( \frac{\beta-\beta_*}{r_\ep^{2n}}+\ep^{\theta n}  ) & \text{ if }\nu_0>n,
    \end{cases}
   \end{equation}
  Similarly, thanks to the estimates of $P_{\beta, \ep}(0)$ obtained in the previous zone we also have 
\begin{equation}
\label{estimee 3}
\sup_{2r_\ep \ge \rho \ge \frac 12r_\ep} |P_{\beta, \ep}(0)| =
    \begin{cases}
      O( \frac{\beta-\beta_*}{r_\ep^{2n}}+\ep^{\theta \nu_0}  )   & \text{ if } \nu_0< n, \\
     O( \frac{\beta-\beta_*}{r_\ep^{2n}}+\ep^{\theta n} \log \frac 1 \ep) & \text{ if } \nu_0= n, \\
        O( \frac{\beta-\beta_*}{r_\ep^{2n}}+\ep^{\theta n}  ) & \text{ if }\nu_0>n,
    \end{cases}
\end{equation}
and the weighted $C^{0,\alpha}_\delta$ estimate on the considered zone follows immediately from \eqref{estimee 3} since $\rho \simeq r_\ep$. \\

$\bullet$ {\it Zone} $(\rho \le \frac 12r_\ep)$. 

\smallskip
\noindent
The first second item of Proposition~\ref{prop formal expansion} shows that 
\begin{equation}
\label{estimee 3 bis}
\sup_{\rho \le \frac 12r_\ep} \rho^\delta |P_{\beta, \ep}(0)| \lessapprox r_\ep^{2k+2+\delta}.
\end{equation}
where $k$ can be chosen arbitrary large and the same bound holds for the $C^{0,\alpha}_\delta$ norm. Said otherwise, the "Tian-Yau" zone will not contribute to the error terms.\\

In conclusion, we find that for any $\delta \in (0,2\nu_0)$, we have

\begin{equation}
\label{P(0) norm}
\|P_{\beta, \ep}(0)\|_{C^{0,\alpha}_\delta} = 
    \begin{cases}
      O( \frac{\beta-\beta_*}{r_\ep^{2n-\delta}}+r_\ep^{\delta}\ep^{\theta \nu_0}  )   & \text{ if } \nu_0< n, \\
     O( \frac{\beta-\beta_*}{r_\ep^{2n-\delta}}+r_\ep^{\delta}\ep^{\theta n} \log \frac 1  \ep) & \text{ if } \nu_0= n, \\
        O( \frac{\beta-\beta_*}{r_\ep^{2n-\delta}}+r_\ep^{\delta}\ep^{\theta n}  ) & \text{ if }\nu_0>n.
    \end{cases}
\end{equation}

\subsection{The obstruction}
In this section, we estimate 
\[\langle P_{\beta, \ep}(0), \tau_{\beta, \ep}\rangle=\int_X P_{\beta, \ep}(0) \tau_{\beta, \ep} \, d \mathrm{vol}_{g_{\beta, \ep}}\] where $\tau_{\beta, \ep}$ has been defined in \eqref{tau}. Here again, we study each of the three zones separately. \\

$\bullet$ {\it Zone} $(\rho \ge 2r_\ep)$. 

\smallskip
\noindent
Here, we have $\Upsilon_\ep^*\tau_{\beta, \ep}=\phi'_{\beta, L}+\lambda_\beta$. Since we can work as if $\phi$ were horizontal, the estimate \eqref{estimee 1} shows that the main contribution to be computed is $\ep^{\nu_0}\cdot I_{\ep,\beta}$ where
\begin{eqnarray*}
I_{\ep,\beta}&:=&\int_{u\le u_\ep-\log 4}|\phi_{j_0}|^2  e^{j_0 u}  (\phi'_{\beta, L}+\lambda_\beta)\cdot  \phi_{\beta, L}''(-\phi'_{\beta, L})^{n-1} du \wedge d^cu \wedge \omega_{D}^{n-1}\\
&=&\int_{u\le u_\ep-\log 4}|\phi_{j_0}|^2  e^{\beta_*(\nu_0-n)u}  (\phi'_{\beta, L}+\lambda_\beta)\cdot  e^{-\mu \phi_{\beta, L}} du \wedge d^cu \wedge \omega_{D}^{n-1}
\end{eqnarray*}
Performing fiberwise integral we are reduced to studying the 1D integral
\[J_{\beta, \ep}:=\int_{ -\infty}^{u_\ep-\log 4} e^{\beta_*(\nu_0-n)u}  (\phi'_{\beta, L}+\lambda_\beta)\cdot  e^{-\mu \phi_{\beta, L}} du.\]
Near $u=-\infty$, the integral is indeed convergent since $e^{-\mu \phi_{\beta, L}} \approx e^{(n\beta_*+\beta)u}$. Near $0$, the integrand behaves like $e^{\beta_*(\nu_0-n)u}$ hence it will converge (when $r_\ep\to 0$) if and only if $\nu_0<n$. The sign of the integral at the limit is crucial but not obvious at this point since $\phi'_{\beta, L}+\lambda_\beta$ does not have a sign (it is negative near $D$ and positive near the conical point). Let us work out this issue next and separate three cases. \\

$a.$ Case $\nu_0<n$.  One writes
\[e^{\beta_*(\nu_0-n)u}  (\phi'_{\beta, L}+\lambda_\beta)\cdot  e^{-\mu \phi_{\beta, L}}=-\frac1\mu e^{\beta_*\nu_0 u} \cdot \big(e^{-\mu \phi_{\beta, L}-n\beta_*u}\big)'\]
and integrates by parts to obtain
\[J_{\beta, \ep}=\frac {\beta_*\nu_0} \mu \int_{ -\infty}^{u_\ep-\log 4}e^{\beta_*(\nu_0-n) u-\mu \phi_{\beta, L}}du+O(e^{\beta_*(\nu_0-n)u_\ep})\]
hence there is a number $a_\beta>0$ converging to a positive number $a_{\beta_*}$ when $\beta\to \beta_*$ such that
\begin{equation}
\label{Ieb}
I_{\beta, \ep}=a_\beta+O(r_\ep^{2(n-\nu_0)}). 
\end{equation}

\medskip

$b.$  Case $\nu_0=n$.  Thanks to \eqref{expansion}, we find a positive number $a_\beta'$ such that
\begin{equation}
\label{Ieb2}
I_{\beta, \ep}=a_\beta' u_\ep+O(1).
\end{equation}

\medskip

$c.$  Case $\nu_0>n$.  The rough estimate $I_{\ep,\beta}=O(r_{\ep}^{2(n-\nu_0)})$ will suffice. 
 All in all, we have (up to scaling $a_\beta'$)
\begin{equation}
\label{obstruction0}
\int_{\rho \ge 2r_\ep} P_{\beta, \ep}(0) \tau_{\beta, \ep} \, d \mathrm{vol}_{g_{\beta, \ep}}=
\begin{cases}
a_\beta \ep^{\nu_0}+O(r_\ep^{2n} \ep^{\theta \nu_0}) & \mbox{if} \quad \nu_0<n,\\
a_\beta' \ep^n \log \frac 1{r_\ep}+O(\ep^n) &\mbox{if}\quad  \nu_0=n \\
O(\ep^{\theta \nu_0} r_{\ep}^{2n})& \mbox{if} \quad  \nu_0>n. 
\end{cases}
\end{equation}

\medskip

$\bullet$ {\it Zone} $(2r_\ep \ge \rho \ge \frac 12r_\ep)$. 

\smallskip
\noindent
To lighten the notation in this paragraph, let us set $A_\ep:=\Upsilon_\ep^{-1}(\{2r_\ep \ge \rho \ge \frac 12r_\ep\})\approx (2r_\ep \ge r \ge \frac 12r_\ep)$ and
\[\psi_{\beta,\ep}:=(1-\chi_\ep)(\Upsilon_\ep^*(\ep \Phi_{{\rm TY}, \ep})-\phi_{\beta, L})\]
so that 
\begin{equation}
\label{diff metriques}
\Upsilon_\ep^*\omega_{\beta, \ep}=dd^c \phi_{\beta, L}+dd^c \psi_{\beta, \ep}+h.o.t.
\end{equation}
by \eqref{pot gluing}. If we additionnally set 
\begin{equation}
\label{def E}
E:=
    \begin{cases}
      \frac{\beta-\beta_*}{r_\ep^{2n}}+\ep^{\theta \nu_0}     & \text{ if } \nu_0< n, \\
       \frac{\beta-\beta_*}{r_\ep^{2n}}+\ep^{\theta n} \log \frac 1 \ep  & \text{ if } \nu_0= n, \\
          \frac{\beta-\beta_*}{r_\ep^{2n}}+\ep^{\theta n}   & \text{ if }\nu_0>n,
    \end{cases}.
    \end{equation}
we see from \eqref{diff} and \eqref{estimee 3} that
\begin{equation}
\label{estimee 4}
\sup_{A_\ep} |\psi_{\beta, \ep}|= O(r_\ep^2 E), \quad \mbox{and} \quad \sup_{A_\ep} \big(|dd^c \psi_{\beta, \ep}|_{\omega_{\beta, L}}+|\Upsilon_\ep^*P_{\beta, \ep}(0)|\big)=O(E). 
\end{equation}
Thanks to \eqref{diff metriques} and the almost horizontality of $\phi$ as well as
\begin{equation}
\label{error term}
\Upsilon_\ep^* \omega_{\beta, \ep}^n=\om_{\beta, L}^n(1+O(E)). 
\end{equation}
The integral we need to estimate is 
\begin{eqnarray}
\int_{A_\ep} \Upsilon_\ep^*( P_{\beta,\ep}(0)\tau_{\beta, \ep} \omega_{\beta, \ep}^n)&=& \int_{A_\ep} \Upsilon_\ep^*( P_{\beta,\ep}(0)\tau_{\beta, \ep} )\omega_{\beta, L}^n+ O\big(r_\ep^{2n}  E \sup_{2r_\ep \ge \rho \ge \frac 12r_\ep}|P_{\beta, \ep}(0)|)\big) \nonumber\\
&=& \int_{A_\ep} \Upsilon_\ep^*( P_{\beta,\ep}(0)\tau_{\beta, \ep} )\omega_{\beta, L}^n+O(r_\ep^{2n} E^2) \label{integral error}
\end{eqnarray}
thanks to \eqref{estimee 4}. So from now on we focus on the integral $\int_{A_\ep} \Upsilon_\ep^*( P_{\beta,\ep}(0)\tau_{\beta, \ep} )\omega_{\beta, L}^n$. \\

First, we write 
\begin{equation}
\label{zone 2}
\Upsilon_\ep^* P_{\beta,\ep}(0)= \log \frac{|\Omega_L|^2}{\Upsilon_\ep^*(\ep^n|\Omega|^2)}+L_{\beta} \psi_{\beta, \ep}+Q(dd^c \psi_{\beta, \ep})
\end{equation}
where $L_{\beta}= \Delta_{\omega_{\beta, L}}+\mu_\beta$ and $Q(dd^c \psi_{\beta,\ep})=O(E^2)$. In the gluing zone where we are working, the estimate \eqref{scale vol 2} reads
\begin{equation}
\label{ration vol}
\log \frac{|\Omega_L|^2}{\Upsilon_\ep^*(\ep^n|\Omega|^2)}=O(\ep^{\theta\nu_0}).
\end{equation}
Since $\Upsilon_\ep^*\tau_{\beta, \ep}\approx \lambda_\beta$, we have
\begin{equation}
\label{easy term}
 (\mu_\beta \psi_{\beta, \ep}+Q(dd^c \psi_{\beta, \ep})) \cdot \Upsilon_\ep^*\tau_{\beta, \ep} =O(r_\ep^{2}E+E^2)
\end{equation}
by \eqref{estimee 4}. We are left to evaluating 
\[I:=\int_{A_\ep} \Delta_{\omega_{\beta, L}} \psi_{\beta, \ep} \cdot (\chi_\ep \phi_{\beta, L}'+\lambda_\beta) \, \om_{\beta, L}^n=I_1+I_2+I_3,\]
where 
\[I_1=\int_{A_\ep}  \psi_{\beta, \ep} \cdot \Delta_{\omega_{\beta, L}} (\chi_\ep \phi_{\beta, L}') \, \om_{\beta, L}^n\]
and 
\[I_2=-\int_{r=\frac 12 r_\ep} \partial_\nu \psi_{\beta, \ep} \cdot (\phi_{\beta, L}'+\lambda_\beta) d\sigma, \quad I_3=\int_{r=\frac 12 r_\ep} \psi_\ep \cdot \partial_\nu \phi_{\beta, L}'d\sigma\]
thanks to Stokes theorem since $\psi_{\beta, \ep}=0$ near $r=2r_\ep$, where $\nu$ is the outward normal vector and $\sigma$ is the measure on $(r=cst)$ induced by $\om_{\beta, L}$. From \eqref{omb} and \eqref{expansion3}, we see that $\partial_\nu = (c_1+O(\delta(r)) \partial_r$ and $d\sigma = (c_2+O(\delta(r)) r^{2n-1} \eta \wedge d\mathrm{vol}_{g_D}$ where $c_1, c_2$ are positive constants and $\delta(r)$ has been defined in \eqref{delta r}. 

Let us first deal with $I_1$. We have $|\Delta_{\omega_{\beta, L}} \chi_\ep|+|\nabla \chi_\ep|^2 = O(r_\ep^{-2})$. Moreover, the estimate $\phi'_{\beta, L}=O(r^2)$ derived from \eqref{expansion2} can be iterated to higher orders by \eqref{MA} to obtain $\Delta_{\omega_{\beta, L}}(\chi_\ep  \phi'_{\beta, L}) = O(1)$. Combined with \eqref{estimee 4}, this shows that the integrand in $I_1$ is a $O(r_\ep^2 E)$ so that 
\begin{equation}
\label{I1}
I_1=O(r_\ep^{2n+2}E).
\end{equation}

Let us now get to $I_2$. We have $ \phi'_{\beta,L}+\lambda_\beta= \lambda_\beta+O(r_\ep^2)$ and thanks to \eqref{diff}
\begin{equation*}
\frac{-1}{2n-2}\partial_r \psi_{\beta, \ep}=
    \begin{cases}
       -a_{L}\frac{\beta-\beta_*}{r^{2n-1}}+O(r\ep^{\theta\nu_0}+\frac{\beta-\beta_*}{r^{2n-1}} \cdot \delta(r))  & \text{ if } \nu_0< n, \\
        \frac{ a \ep^n \log (\frac{r}{\sqrt\ep})-a_L(\beta-\beta_*)}{r^{2n-1}}+O(\frac{\ep^n}{r^{2n-1}}+\frac{\beta-\beta_*}{r^{2n-1}} \cdot \delta(r)) & \text{ if } \nu_0= n,\\ 
        \frac {a\ep^n-a_{L}(\beta-\beta_*)}{r^{2n-1}} + O(r \ep^{\theta(n+\sigma)}+\frac{\beta-\beta_*}{r^{2n-1}} \cdot \delta(r)) & \text{ if }\nu_0>n.
    \end{cases}
   \end{equation*}
   Therefore, there exists a positive constant $\kappa=\kappa(\nu_0,\beta)>0$ such that 
   \begin{equation}
\label{I2}
I_2=
    \begin{cases}
       -\kappa(\beta-\beta_*)+O(r_\ep^{2n}\ep^{\theta\nu_0 }+(\beta-\beta_*) \cdot \delta(r))  & \text{ if } \nu_0< n, \\
        \kappa\big( a \ep^n \log (\frac{r_\ep}{\sqrt \ep})-a_L(\beta-\beta_*)\big)+O(\ep^n+(\beta-\beta_*) \cdot \delta(r)) & \text{ if } \nu_0= n,\\ 
       \kappa (a\ep^n-a_{L}(\beta-\beta_*))+ O(r_\ep^{2n} \ep^{\theta(n+\sigma)}+(\beta-\beta_*) \cdot \delta(r)) & \text{ if }\nu_0>n.
    \end{cases}
   \end{equation}
 
Finally, we have $\psi_{\beta,\ep} \cdot \partial_\nu  \phi'_{\beta, L} =O(r_\ep^3E)$ which shows that
\begin{equation}
\label{I3}
I_3=  O(r_\ep^{2n+2}E).
\end{equation}

Given \eqref{integral error},  \eqref{ration vol}, \eqref{easy term}, \eqref{I1} and \eqref{I3}, all the terms contributing to $\int_{A_\ep} \Upsilon_\ep^*( P_{\beta,\ep}(0)\tau_{\beta, \ep} \omega_{\beta, \ep}^n)$ except for $I_2$ are dominated by
\[r_\ep^{2n}E^2+r_\ep^{2n+2}E+r^{2n} \ep^{\theta \nu_0}\]
and it is straightforward to see that these terms are at most of the same order as the error term in the integral $I_2$, cf \eqref{I2}. In conclusion, 
{\small
\begin{equation}
\label{obstruction}
\int_{2r_\ep \ge \rho \ge \frac 12r_\ep} P_{\beta, \ep}(0) \tau_{\beta, \ep} \, d \mathrm{vol}_{g_{\beta, \ep}}=
    \begin{cases}
       -\kappa(\beta-\beta_*)+O(r_\ep^{2n}\ep^{\theta\nu_0 }+(\beta-\beta_*) \cdot \delta(r))  & \text{ if } \nu_0< n, \\
        \kappa\big( a \ep^n \log (\frac{r_\ep}{\sqrt \ep})-a_L(\beta-\beta_*)\big)+O(\ep^n+(\beta-\beta_*) \cdot \delta(r)) & \text{ if } \nu_0= n,\\ 
       \kappa (a\ep^n-a_{L}(\beta-\beta_*))+ O(r_\ep^{2n} \ep^{\theta(n+\sigma)}+(\beta-\beta_*) \cdot \delta(r)) & \text{ if }\nu_0>n.
    \end{cases}
\end{equation}
}

\medskip

$\bullet$ {\it Zone} $(\rho \le \frac 12r_\ep)$. 

\smallskip
\noindent
Since the volume of the zone is of order $\ep^n R_\ep^{2n}=r_\ep^{2n}$, the second item of Proposition~\ref{prop formal expansion} shows that 
\begin{equation}
\label{estimee 6}
\int_{\rho \le \frac 12r_\ep} P_{\beta, \ep}(0) \tau_{\beta, \ep} \, d \mathrm{vol}_{g_{\beta, \ep}} =O( r_\ep^{2n+2k+2}).
\end{equation}
where $k$ can be chosen arbitrary large. As before, this zone will not contribute to the obstruction. \\

In conclusion, the estimates \eqref{obstruction0}, \eqref{obstruction} and \eqref{estimee 6} imply that they are positive constants $\kappa_i=\kappa_i(\nu_0, \beta)$ for $i=1, \ldots, 6$ (where the dependence in $\beta$ is continuous up to $\beta=\beta_*$ and $\kappa_i(\nu_0, \beta_*)>0$) such that
\begin{equation}
\label{obstruction final}
\langle P_{\beta, \ep}(0), \tau_{\beta, \ep}\rangle=
\begin{cases}
\kappa_1 \ep^{\nu_0}-\kappa_2(\beta-\beta_*)+O(r_\ep^{2n}\ep^{\theta \nu_0}+(\beta-\beta_*)\cdot \delta(r_\ep)) & \text{ if } \nu_0< n, \\
\kappa_3 \ep^n \log \frac 1 \ep -\kappa_4(\beta-\beta_*) +O(\ep^n+(\beta-\beta_*)\cdot \delta(r_\ep))& \text{ if } \nu_0= n, \\
\kappa_5 \ep^n-\kappa_6(\beta-\beta_*)+O(r_\ep^{2n}\ep^{n\theta +\sigma}+(\beta-\beta_*)\cdot \delta(r_\ep))& \text{ if } \nu_0> n. 
\end{cases}
\end{equation}
This is clear except maybe in the case where $\nu_0=n$, where the first term  $\kappa_3 \ep^n \log \frac 1\ep$ comes from the contributions of the obstruction from both the normal bundle (term $a_\beta'\ep^n \log \frac 1{r_\ep} = \frac 12 (1-\theta)a_\beta' \ep^n \log \frac 1 \ep$) and the gluing zone (term $\kappa a\ep^n \log \frac{r_\ep}{\sqrt \ep}=\frac 12 \theta \kappa a \ep^n \log \frac 1\ep$) which are both {\it positive}. \\

 \subsection{Choice of $\ep=\ep(\beta)$}
Given \eqref{obstruction final}, it is natural to introduce the scaling parameter $\ep_\beta$ satisfying 
\begin{equation}
\label{beta}
\beta-\beta_*=
\begin{cases}
\frac {\kappa_1}{\kappa_2} \cdot \ep_\beta^{\nu_0}  & \text{ if } \nu_0< n, \\
\frac {\kappa_3}{\kappa_4} \cdot \ep_\beta^n \log \frac 1 {\ep_\beta}  & \text{ if } \nu_0= n, \\
\frac {\kappa_5}{\kappa_6} \cdot \ep_\beta^n  & \text{ if } \nu_0> n
\end{cases}
\end{equation}
and then we choose from now on $\ep$ close to $\ep_\beta$ in the sense that 
\begin{equation}
\label{ep eta}
\ep=\ep_\beta(1+\eta) \quad \mbox{with} \quad |\eta|\ll 1
\end{equation}
where $\eta\in \mathbb R$ will be allowed to vary later for the purpose of solving out Monge-Ampère equation; in particular it will be important that $\eta$ takes positive and negative values. 
%However, we need to impose that $\eta$ be not too small. 
Let us be more precise now and distinguish the three possible cases. \\

$\bullet$ {\it Case $\nu_0<n$.}

\noindent
Condition \eqref{zone beta} translates into 
\begin{equation}
\label{theta bound}
\theta>1-\frac{\nu_0}{n},
\end{equation}
which can be fulfilled. Next, we have 
\[\kappa_1 \ep^{\nu_0}-\kappa_2(\beta-\beta_*)=\nu_0\kappa_1 \ep_\beta^{\nu_0}  \cdot  (\eta+O(\eta^2))\]
 while the error term in \eqref{obstruction final} is of order
 \[r_\ep^{2n}\ep^{\theta \nu_0}+(\beta-\beta_*)\cdot \delta(r_\ep)\approx \ep^{\nu_0}(\ep^{(1-\theta)(n-\nu_0)}+\ep^{1-\theta}+\ep^{\nu_0-n(1-\theta)})\] and the latter is a $O(\ep_\beta^{\nu_0+(1-\theta)\min\{1,n-\nu_0\}})$ as soon as $\theta>1-\frac{\nu_0-1}{n}$, which we assume from now on. Then we have
%  so that $\eta$ should satisfy
%\begin{equation}
%\label{gamma 1}
%\ep^{(1-\theta)(n-\nu_0)}+\ep^{1-\theta}+\ep^{\nu_0-n(1-\theta)} \ll |\eta| \ll  1
%\end{equation}
%in order to have 
\begin{equation}
\label{gamma 11}
\langle P_{\beta, \ep}(0), \tau_{\beta, \ep}\rangle=\nu_0\kappa_1 \ep_\beta^{\nu_0} \cdot \big(\eta+O(\ep^{(1-\theta)\min\{1,n-\nu_0\}})+O(\eta^2)\big).
\end{equation} 
Moreover, the quantity $E$ in \eqref{def E} behaves like $\ep_\beta^{\nu_0-n(1-\theta)}$. \\

$\bullet$  {\it Case $\nu_0=n$.}

\noindent 
Condition \eqref{zone beta} is automatic. Next, we have
\[\kappa_3 \ep^n \log \frac 1 \ep -\kappa_4(\beta-\beta_*)=n\kappa_3 \ep_\beta^n \log \frac 1{\ep_\beta}  \cdot (\eta+O(\frac {\eta} {\log \frac {1}{\ep_\beta}})+O(\eta^2))\]
while the order of the error term $O(\ep^n+(\beta-\beta_*)\cdot \delta(r_\ep))$ is simply $O(\ep_\beta^n)$. 
%So we choose $\eta$ satisfying 
%\begin{equation}
%\label{gamma 2}
%\frac 1 {\log \frac 1{\ep_\beta}} \ll |\eta| \ll 1
%\end{equation}
We then have
\begin{equation}
\label{gamma 22}
\langle P_{\beta, \ep}(0), \tau_{\beta, \ep}\rangle=n\kappa_3 \ep_\beta^n \log \frac 1{\ep_\beta} \cdot \big(\eta+O(\frac {1} {\log \frac {1}{\ep_\beta}})+O(\eta^2)\big).
\end{equation}
Moreover, $E$ behaves like $\ep_\beta^{n\theta}\log \frac 1{\ep_\beta}$. \\

$\bullet$    {\it Case $\nu_0>n$.} 

\noindent
Condition \eqref{zone beta} is automatic too and 
\[\kappa_5 \ep^n-\kappa_6(\beta-\beta_*)=n\kappa_5\ep_\beta^n \cdot (\eta+O(\eta^2))\]
while the order of the error term $O(r_\ep^{2n}\ep^{n\theta +\sigma}+(\beta-\beta_*)\cdot \delta(r_\ep))$ is $O(\ep_\beta^{n+\sigma})$ as soon as $\theta>\frac 1{n+1}$. 
%We choose $\eta$ satisfying 
%\begin{equation}
%\label{gamma 3}
%\ep_\beta^{1-\theta}+\ep_\beta^{\sigma} \ll |\eta| \ll 1 
%\end{equation}
%so that 
Therefore, we get
\begin{equation}
\label{gamma 33}
\langle P_{\beta, \ep}(0), \tau_{\beta, \ep}\rangle=n\kappa_5\ep_\beta^n \cdot \big(\eta+O(\ep_\beta^\sigma)+O(\eta^2)\big).
\end{equation}
Moreover, $E$ behaves like $\ep_\beta^{n\theta}$. \\

In summary, we get from \eqref{P(0) norm} and the case by case analysis just above

\begin{prop}
\label{prop obstruction}
If $\beta ,\ep, \eta$ are chosen as in \eqref{beta}-\eqref{ep eta}, then there exists $\kappa\in \mathbb R$ satisfying
\begin{equation}
\label{P gamma}
\langle P_{\beta, \ep}(0), \tau_{\beta, \ep}\rangle=   (\beta-\beta_*) \cdot\big (\kappa \eta +O(F)+O(\eta^2)\big)
\end{equation}
and $\kappa$ is such that $\kappa_0 \le |\kappa |\le \kappa_0^{-1}$ for some constant $\kappa_0>0$ independent of $\beta, \ep$ and 
\[F:=
    \begin{cases}
       \ep_\beta^{(1-\theta)\min\{1,n-\nu_0\}}  & \text{ if } \nu_0< n, \\
     \frac {1} {\log \frac {1}{\ep_\beta}}& \text{ if } \nu_0= n, \\
       \ep_\beta^\sigma   & \text{ if }\nu_0>n.
    \end{cases}
\]
Moreover, if $\delta\in (0, 2\nu_0-2)$, we have 
\begin{equation}
\label{P(0)}
\|P_{\beta, \ep}(0)\|_{C^{0,\alpha}_{\delta+2}} = O(r_\ep^{\delta+2} E)
\end{equation}
where the $O()$ is uniform in $\beta, \ep,\eta$ and $E$ is defined by
\begin{equation}
\label{def E 2}
E:=
    \begin{cases}
       \ep^{\nu_0-n(1-\theta)}   & \text{ if } \nu_0< n, \\
     \ep^{ n\theta} \log \frac 1 \ep & \text{ if } \nu_0= n, \\
       \ep^{ n\theta}   & \text{ if }\nu_0>n.
    \end{cases}
\end{equation}
\end{prop}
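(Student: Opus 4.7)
The proposition is essentially a synthesis of what has been computed in the three case-by-case expansions \eqref{gamma 11}, \eqref{gamma 22}, \eqref{gamma 33}, together with the pointwise bound \eqref{P(0) norm}. The plan is to expand the expressions in \eqref{obstruction final} and \eqref{P(0) norm} around the distinguished scaling $\ep_\beta$ defined by \eqref{beta}, using $\ep=\ep_\beta(1+\eta)$ with $|\eta|\ll 1$, and then to check that all remaining error terms are absorbed into $O((\beta-\beta_*) F)$ for the inner product assertion and into $O(r_\ep^{\delta+2}E)$ for the norm assertion. The ``cancellation'' that produces a factor $\eta$ instead of $1$ in the leading term is exactly what the definition \eqref{beta} of $\ep_\beta$ is designed to achieve.

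For the first assertion, I would treat the three cases separately. In the case $\nu_0<n$, the relation \eqref{beta} gives $\kappa_1\ep^{\nu_0}-\kappa_2(\beta-\beta_*)=\kappa_2(\beta-\beta_*)\bigl(\nu_0\eta+O(\eta^2)\bigr)$, so setting $\kappa:=\nu_0\kappa_2$ yields the leading term; the error $r_\ep^{2n}\ep^{\theta\nu_0}+(\beta-\beta_*)\delta(r_\ep)$ from \eqref{obstruction final} is then of size $(\beta-\beta_*)\cdot O(\ep_\beta^{(1-\theta)\min\{1,n-\nu_0\}})$ provided $\theta$ is chosen compatibly with both \eqref{theta bound} and the sharper threshold $\theta>1-(\nu_0-1)/n$. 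The cases $\nu_0=n$ and $\nu_0>n$ proceed in the same way, producing $\kappa=n\kappa_4$ and $\kappa=n\kappa_6$ respectively, with error terms $O(1/\log\frac1{\ep_\beta})$ and $O(\ep_\beta^{\sigma})$ matching the announced $F$. Continuity in $\beta$ of the constants $\kappa_i$ together with their positivity at $\beta=\beta_*$ (established in the computations leading to \eqref{Ieb}, \eqref{Ieb2} and the positive sign of $a$ in Theorem~\ref{thm:dev-TY}) furnishes the uniform two-sided bound $\kappa_0\le|\kappa|\le\kappa_0^{-1}$.

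For the second assertion, the estimate \eqref{P(0) norm} already gives the right shape; the only work is to recognize each summand as $r_\ep^{\delta+2}E$ with $E$ given by \eqref{def E 2}. For instance in the case $\nu_0<n$, one has
\[
\frac{\beta-\beta_*}{r_\ep^{2n-\delta}}=r_\ep^{\delta+2}\cdot\frac{\beta-\beta_*}{r_\ep^{2n+2}} \asymp r_\ep^{\delta+2}\cdot\ep^{\nu_0-n(1-\theta)-(1-\theta)}=r_\ep^{\delta+2}\cdot E\cdot\ep^{1-\theta},
\]
which is absorbed into $r_\ep^{\delta+2}E$, while $r_\ep^{\delta+2}\ep^{\theta\nu_0}\le r_\ep^{\delta+2}E$ by comparison of the exponents as soon as $\theta$ lies in the admissible range. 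The other two cases are handled in the same way.

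The main obstacle is therefore not conceptual but purely a matter of bookkeeping: one must verify that a single choice of $\theta$ in $(0,1)$ simultaneously satisfies all the thresholds imposed by the error terms in \eqref{P(0) norm} and \eqref{obstruction final}, and that the resulting upper bounds really do combine into the clean forms \eqref{P gamma} and \eqref{P(0)}. Once that is done, no further analytical input beyond the estimates of Section~\ref{sec:estim-error-obstr} is needed.
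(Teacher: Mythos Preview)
Your proposal is correct and follows exactly the paper's approach: the proposition is stated there as a direct summary of \eqref{gamma 11}--\eqref{gamma 33} and \eqref{P(0) norm}, and your reading of the constants $\kappa=\nu_0\kappa_2$, $n\kappa_4$, $n\kappa_6$ and of the error hierarchy is accurate. The only cosmetic slip is that, when invoking \eqref{P(0) norm} for the $C^{0,\alpha}_{\delta+2}$ norm, you should substitute $\delta+2$ for $\delta$ in the right-hand side (so the first term becomes $\frac{\beta-\beta_*}{r_\ep^{2n-\delta-2}}=r_\ep^{\delta+2}\cdot\frac{\beta-\beta_*}{r_\ep^{2n}}\asymp r_\ep^{\delta+2}E$ directly, without the extra $\ep^{1-\theta}$), but this does not affect the conclusion.
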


\section{Resolution of the Monge-Ampère equation}

\subsection{Resolution modulo the obstruction}
The first step is to solve the Monge-Ampère equation up to the obstruction term. More precisely, we have

\begin{prop}
\label{resolution mod obstruction}
Let $\delta \in (0,1)$. Suppose $\ep$ close enough to $0$ and $0<\beta-\beta_*<c \varepsilon^{\inf(\nu_0,n)}$ (or $c \varepsilon^n \log \frac1\varepsilon$ is $\nu_0=n$), where $c$ is some fixed constant. Then there exists a unique function $f_{\beta, \ep}\in C^{2,\alpha}_{\delta}(X)^{G, \perp}$ such that 
\[P_{\beta, \ep}(f_{\beta, \ep})^\perp=0 \quad \mbox{and} \quad \|f_{\beta, \ep}\|_{C^{2,\alpha}_{\delta}} \le C \|P_{\beta, \ep}(0)^\perp\|_{C^{0, \alpha}_{\delta+2}}\]
for some constant $C>0$ independent of $\beta, \ep$.
\end{prop}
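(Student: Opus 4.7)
The plan is a Banach fixed point argument applied to the projected equation, combining the uniform inverse from Proposition~\ref{prop:uniform-inverse} with the quadratic control of $Q_{\beta,\ep}$ coming from \eqref{Q}. Using the decomposition \eqref{decomp P}, the equation $P_{\beta,\ep}(f)^\perp = 0$ on the space $C^{2,\alpha}_\delta(X)^{G,\perp}$ will be rewritten as the fixed point problem
\[f = \Psi(f) := -K_{\beta,\ep}\big[P_{\beta,\ep}(0)^\perp + Q_{\beta,\ep}(f)^\perp\big],\]
where $K_{\beta,\ep}$ denotes the bounded right inverse of the projection of $L_{\beta,\ep}$ onto $C^{0,\alpha}_{\delta+2}(X)^{G,\perp}$. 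The operator norm $\|K_{\beta,\ep}\|$ is uniformly bounded in $(\beta,\ep)$ thanks to Proposition~\ref{prop:uniform-inverse}; existence of a right inverse follows from the a priori estimate via a standard Fredholm-type argument, and $G$-equivariance is preserved by averaging over the compact group $G$. The hypothesis $0<\beta-\beta_*<c\ep^{\inf(\nu_0,n)}$ (or the logarithmic variant when $\nu_0=n$) ensures, via \eqref{P(0)}, that
\[r_0 := 2\|K_{\beta,\ep}\|\cdot\|P_{\beta,\ep}(0)^\perp\|_{C^{0,\alpha}_{\delta+2}}\]
can be made as small as one wants by taking $\ep$ small.

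The next step is to verify that $\Psi$ is a self-map and strict contraction on the closed ball $B_{r_0}\subset C^{2,\alpha}_\delta(X)^{G,\perp}$. The polarized form of \eqref{Q} reads pointwise
\[|Q_{\beta,\ep}(f_1)-Q_{\beta,\ep}(f_2)|\le C\big(|dd^c f_1|_{\omega_{\beta,\ep}}+|dd^c f_2|_{\omega_{\beta,\ep}}\big)\,|dd^c(f_1-f_2)|_{\omega_{\beta,\ep}}\]
whenever $|dd^c f_i|_{\omega_{\beta,\ep}}\le 1$, a condition which is automatic on $B_{r_0}$ once $\ep$ is small. Translating this into the weighted Hölder norm will yield
\[\|Q_{\beta,\ep}(f_1)-Q_{\beta,\ep}(f_2)\|_{C^{0,\alpha}_{\delta+2}}\le C\,r_0\,\|f_1-f_2\|_{C^{2,\alpha}_\delta},\]
and Lemma~\ref{lem tau} will ensure that the subsequent projection onto $\tau_{\beta,\ep}^\perp$ does not spoil this bound, since $\langle Q_{\beta,\ep}(f),\tau_{\beta,\ep}\rangle$ is itself quadratic in $\|f\|_{C^{2,\alpha}_\delta}$ and $\|\tau_{\beta,\ep}\|_{C^{0,\alpha}_{\delta+2}}$ is uniformly bounded. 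Composing with $K_{\beta,\ep}$ will then give $\|\Psi(f_1)-\Psi(f_2)\|_{C^{2,\alpha}_\delta}\le C' r_0\,\|f_1-f_2\|_{C^{2,\alpha}_\delta}$ with $C'r_0<\tfrac12$, and together with the bound $\|\Psi(0)\|_{C^{2,\alpha}_\delta}\le\tfrac12 r_0$ this will show $\Psi(B_{r_0})\subset B_{r_0}$.

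Once the contraction is established, Banach's fixed point theorem will produce a unique $f_{\beta,\ep}\in B_{r_0}$ solving $f=\Psi(f)$, which is equivalent to $P_{\beta,\ep}(f_{\beta,\ep})^\perp=0$, and the required norm estimate is built into the definition of $r_0$. The hard part will be the bookkeeping of the weighted control of $Q_{\beta,\ep}$: the glued metric $\omega_{\beta,\ep}$ has three geometrically distinct regimes (scaled Tian-Yau near the centre, conic Calabi Ansatz near $D$, and a cut-off transition annulus of size $r_\ep$), and the weight $\rho$ interpolates between $\sqrt\ep$ and $1$ accordingly. Proposition~\ref{schauder} supplies the Schauder estimate uniformly across these regions, but one still has to verify that the pointwise quadratic control of $Q_{\beta,\ep}$ and of its Hölder seminorm translates into a bound in the norm $\|\cdot\|_{C^{0,\alpha}_{\delta+2}}$ with constants independent of $(\beta,\ep)$. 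This is essentially a piecewise check, the delicate point being the transition annulus where the cut-off $\chi_\ep$ contributes derivatives of size $r_\ep^{-k}$ that must be absorbed by the weight; the assumption $\delta\in(0,1)$, together with the improved smallness given by \eqref{P(0)}, is exactly what makes the scheme close up.
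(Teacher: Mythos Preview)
Your overall scheme (Banach fixed point via the uniform inverse of $L_{\beta,\ep}^\perp$) is the same as the paper's, but there is a genuine gap in your control of the quadratic term. You claim
\[\|Q_{\beta,\ep}(f_1)-Q_{\beta,\ep}(f_2)\|_{C^{0,\alpha}_{\delta+2}}\le C\,r_0\,\|f_1-f_2\|_{C^{2,\alpha}_\delta}\]
with $C$ independent of $(\beta,\ep)$. This is false. From the pointwise estimate $|Q(f_1)-Q(f_2)|\le C(|dd^cf_1|+|dd^cf_2|)|dd^c(f_1-f_2)|$ and the definition of the norms one gets
\[\rho^{\delta+2}|Q(f_1)-Q(f_2)| \le C\,\rho^{-(\delta+2)}\big(\|f_1\|_{C^{2,\alpha}_\delta}+\|f_2\|_{C^{2,\alpha}_\delta}\big)\|f_1-f_2\|_{C^{2,\alpha}_\delta},\]
and since $\rho$ takes values down to $\sqrt\ep$ on the Tian--Yau region, the resulting constant is $\ep^{-1-\delta/2}$, not $O(1)$. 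The delicate region is therefore not the transition annulus but the deepest Tian--Yau part, contrary to what you write.

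Because of this blow-up, the contraction condition is not $r_0\ll 1$ but rather $\ep^{-1-\delta/2}\,\|P_{\beta,\ep}(0)^\perp\|_{C^{0,\alpha}_{\delta+2}}\ll 1$, i.e.\ $\|P_{\beta,\ep}(0)\|_{C^{0,\alpha}_{\delta+2}}\ll \ep^{1+\delta/2}$. This is precisely the point made in the remark following the proposition: it is what forces the use of the formal perturbation of the Tian--Yau metric, and it only closes because $\nu_0=j_0/\beta_*>1$ (so one can take $\nu'$ with $1+\tfrac\delta2<\nu'<\inf(\nu_0,n)$ and get $\|P_{\beta,\ep}(0)\|=O(\ep^{\nu'})$ from \eqref{P(0) norm} for $\theta$ close to $1$). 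Your argument as written does not see this constraint and would appear to work even without the formal perturbation, which it cannot.
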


\begin{rem}
\label{rem TY formel nec}
We will see in the proof that it is crucial to have $\|P_{\beta,\varepsilon}(0)\|_{C^\alpha_{2+\delta}} \ll \varepsilon^{1+\frac\delta2}$. If one had not replaced the Tian-Yau metric by its formal perturbation (or had only done it up to, say, order $R^4$ in the potential so that $\Ric (\ep \om_{{\rm TY},\ep} )\approx \ep \om_{{\rm TY},\ep}$), the Tian-Yau part would contribute a factor $r_\ep^{4+\delta}=\ep^{(1-\theta)(2+\frac \delta 2)}$ (resp. $r_\ep^{6+\delta}=\ep^{(1-\theta)(3+\frac \delta 2)}$) in the norm $\|P_{\beta,\varepsilon}(0)\|_{C^\alpha_{2+\delta}} $ and the latter need not be small compared to  $\varepsilon^{1+\frac\delta2}$ since $\theta$ will have to be chosen arbitrarily close to $1$ in the proof of Proposition~\ref{obs gam} below. 
\end{rem}

\begin{proof}
Given any $\nu'<\inf(\nu_0,n)$ (which we take very close to $\inf(\nu_0,n)$), Proposition~\ref{prop obstruction} shows that for $\theta<1$ close enough to $1$ we have
\begin{equation}
  \label{eq:36}
  \|P_{\beta,\varepsilon}(0)\|_{C^{0,\alpha}_{2+\delta}} = O(\varepsilon^{\nu'}).
\end{equation}

The quadratic term $Q_{\beta,\varepsilon}$ of the equation, defined by (\ref{decomp P}), is controled exactly as in \cite[§ 1.4]{BiqMin11}: it satisfies
\begin{equation}
  \label{eq:40}
  |Q_{\beta,\varepsilon}(f_1)-Q_{\beta,\varepsilon}(f_2)| \leq C |dd^c(f_1-f_2)| \big( |dd^cf_1|+|dd^cf_2| \big)
\end{equation}
as soon as $|dd^cf_j|\leq 1$ for $j=1,2$. Our weighted space $C^0_{\delta+2}$ carries a (small) weight $\varepsilon^{1+\frac\delta2}$ on the Tian-Yau part, and it follows that
{\small
\begin{equation}
  \label{eq:41}
  \| Q_{\beta,\varepsilon}(f_1)-Q_{\beta,\varepsilon}(f_2) \|_{C^0_{2+\delta}} \leq C \varepsilon^{-1-\frac\delta2} \|dd^c(f_1-f_2)\|_{C^0_{2+\delta}} \big( \|dd^cf_1\|_{C^0_{2+\delta}}+\|dd^cf_2\|_{C^0_{2+\delta}} \big).
\end{equation}}
This estimate extends to the Hölder spaces $C^\alpha_{2+\delta}$ by local change of scale.

Finally we have a uniform estimate for the inverse of the linearization provided by Proposition \ref{prop:uniform-inverse}. One can then apply an inverse function theorem on the equation $P_{\beta,\varepsilon}(f)^\perp=0$, provided that $\|P_{\beta,\varepsilon}(0)\|_{C^\alpha_{2+\delta}} \ll \varepsilon^{1+\frac\delta2}$, see for example \cite[Lemma 1.3]{BiqMin11}. Therefore from (\ref{eq:36}) it is sufficient to have $\nu'>1+\frac\delta2$, which can be achieved provided $\inf(\nu_0,n)>1$, that is provided that $\nu_0>1$ which is true since $\beta_*<1$.
\end{proof}

\subsection{Killing the obstruction by varying $\ep$}
The outcome of Proposition~\ref{resolution mod obstruction} is that for fixed $(\ep, \beta)$ close enough to $(0,\beta_*)$, there exists a unique couple of a function and a scalar $(f_{\beta, \ep}, a_{\beta, \ep})\in C^{2,\alpha}_{\delta}(X)^{G, \perp} \times \mathbb R$ such that 
\[P_{\beta, \ep}(f_{\beta, \ep})=a_{\beta, \ep} \tau_{\beta, \ep}.\]
The object of the next proposition is to precisely measure the effect that varying the cone angle (or the scale parameter $\ep$) has on the obstruction $a_{\beta, \ep}$. Most of the work has already been done in Proposition~\ref{prop obstruction}. 

\begin{prop}
\label{obs gam}
Let $\beta, \ep, \eta$ as in \eqref{beta}-\eqref{ep eta}. For a fixed choice of $\delta, \theta \in (0,1)$ such that $\delta+(1-\theta) \ll 1$, we have 
\[\langle P_{\beta, \ep}(0), \tau_{\beta, \ep}\rangle=   (\beta-\beta_*) \cdot\big (\kappa \eta +O(F)+O(\eta^2)\big)\]
where $\kappa, F$ are the ones from Proposition~\ref{prop obstruction}.
\end{prop}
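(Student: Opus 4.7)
The plan is very short: the claimed expansion is literally the content of equation \eqref{P gamma} in Proposition~\ref{prop obstruction}, and the only thing to check is that the specific range of parameters $(\delta,\theta)$ allowed in Proposition~\ref{obs gam} is compatible with the proof of Proposition~\ref{prop obstruction}. So my plan is to invoke \eqref{P gamma} directly and then run through a short compatibility check.

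First, I would recall that \eqref{P gamma} gives
\[
\langle P_{\beta,\ep}(0),\tau_{\beta,\ep}\rangle = (\beta-\beta_*)\bigl(\kappa\eta + O(F) + O(\eta^2)\bigr),
\]
with the very same $\kappa$ and $F$ as in the statement of Proposition~\ref{obs gam}. Crucially, inspection of the case-by-case computation leading to \eqref{gamma 11}, \eqref{gamma 22}, \eqref{gamma 33} shows that the constants $\kappa_i$ (and hence $\kappa$) and the rate $F$ depend on $\nu_0, n, \beta$ but not on the choice of weight $\delta$ or of the gluing parameter $\theta$. So the formula in Proposition~\ref{obs gam} will follow as soon as we check that $(\delta,\theta)$ satisfying $\delta + (1-\theta) \ll 1$ falls inside the admissible range of Proposition~\ref{prop obstruction}.

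Second, I would list the constraints on $(\delta,\theta)$ needed to run \eqref{P gamma}. In the gluing region one needs \eqref{zone beta}, which in the three regimes becomes: $\theta > 1-\tfrac{\nu_0-1}{n}$ when $\nu_0<n$, no constraint when $\nu_0=n$, and $\theta > \tfrac{1}{n+1}$ when $\nu_0>n$; for the weighted norm \eqref{P(0)} one requires $\delta \in (0,2\nu_0-2)$. All of these are upper bounds on $1-\theta$ and $\delta$, hence are automatically satisfied once one demands $\delta + (1-\theta) \ll 1$, since in particular $1-\theta$ and $\delta$ are both individually small. Hence \eqref{P gamma} is valid in this range and gives the claim with the claimed $\kappa, F$.

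The only point that might look like it requires real work — and is worth spelling out briefly — is the motivation for the stronger quantitative condition $\delta + (1-\theta) \ll 1$: it is imposed not to make \eqref{P gamma} valid, but to ensure that the companion estimate \eqref{P(0)} of Proposition~\ref{prop obstruction}, namely $\|P_{\beta,\ep}(0)\|_{C^{0,\alpha}_{\delta+2}} = O(r_\ep^{\delta+2}E)$ with $E$ given by \eqref{def E 2}, is actually smaller than the threshold $\ep^{1+\delta/2}$ needed in Proposition~\ref{resolution mod obstruction}. Since $r_\ep^{\delta+2} = \ep^{(1-\theta)(1+\delta/2)}$ and $E$ contains a factor $\ep^{-n(1-\theta)}$ (in the most constraining case $\nu_0<n$), smallness of $\delta+(1-\theta)$ is exactly what allows this strict inequality to be preserved while staying consistent with the other analytic constraints. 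So there is no real obstacle: Proposition~\ref{obs gam} is essentially a relabeling of Proposition~\ref{prop obstruction} under the more stringent hypothesis on $(\delta,\theta)$ that the next step of the argument requires.
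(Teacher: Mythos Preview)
Your reading of the statement is literal and, as written, correct: the displayed identity with $P_{\beta,\ep}(0)$ is exactly \eqref{P gamma}, so nothing beyond the compatibility check you give would be needed. However, comparing with the paper's own proof (and with how the proposition is invoked in the proof of the Main Theorem) makes it clear that the statement contains a typo: the left-hand side should be $\langle P_{\beta,\ep}(f_{\beta,\ep}),\tau_{\beta,\ep}\rangle$, not $\langle P_{\beta,\ep}(0),\tau_{\beta,\ep}\rangle$. Indeed, the proof opens with ``Given \eqref{P gamma}, all we have left to prove is that $\langle L_{\beta,\ep}(f_{\beta,\ep}),\tau_{\beta,\ep}\rangle+\langle Q_{\beta,\ep}(f_{\beta,\ep}),\tau_{\beta,\ep}\rangle=o(\eta(\beta-\beta_*))$'', and the Main Theorem uses the proposition to assert that $\langle P_{\beta,\ep}(f_{\beta,\ep}),\tau_{\beta,\ep}\rangle$ changes sign as $\eta$ varies.

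So the genuine content of Proposition~\ref{obs gam}, which your proposal misses entirely, is to show that passing from $P_{\beta,\ep}(0)$ to $P_{\beta,\ep}(f_{\beta,\ep})=P_{\beta,\ep}(0)+L_{\beta,\ep}f_{\beta,\ep}+Q_{\beta,\ep}(f_{\beta,\ep})$ does not destroy the expansion. This is the substance of the paper's Steps~1--3: one first uses Proposition~\ref{resolution mod obstruction} and \eqref{P(0)} to get $\|f_{\beta,\ep}\|_{C^{2,\alpha}_\delta}=O(r_\ep^{\delta+2}E)$ and $|dd^cf_{\beta,\ep}|=o(1)$; then Lemma~\ref{lem tau} gives $\langle L_{\beta,\ep}f_{\beta,\ep},\tau_{\beta,\ep}\rangle=O(r_\ep^{2n+2}E)=O((\beta-\beta_*)F)$; finally the quadratic term is controlled via the second estimate of Lemma~\ref{lem tau}, yielding $\langle Q_{\beta,\ep}(f_{\beta,\ep}),\tau_{\beta,\ep}\rangle=O(r_\ep^{2\delta+4}E^2)$ (with an extra $\ep^{-\delta}$ when $n=2$), and one checks $r_\ep^2E^2=O((\beta-\beta_*)F)$ for suitable $\theta$. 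The condition $\delta+(1-\theta)\ll 1$ is what makes all of these bounds go through simultaneously. Your last paragraph correctly identifies the role of this condition for \eqref{P(0)} versus the threshold in Proposition~\ref{resolution mod obstruction}, but does not carry out the actual estimates on the $L$ and $Q$ terms, which is the point of the proposition.
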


\begin{proof}
Given \eqref{P gamma}, all we have left to prove is that 
\begin{equation}
\label{obstruction o gamma}
\langle L_{\beta, \ep}(f_{\beta, \ep}), \tau_{\beta, \ep}\rangle+\langle Q_{\beta, \ep}(f_{\beta, \ep}), \tau_{\beta, \ep}\rangle=o(\eta(\beta-\beta_*)).
\end{equation}

We proceed in several steps. \\

\noindent
{\bf Step 1. }{\it Preliminaries.}

\medskip

\noindent
We first claim that 
\begin{equation}
\label{fC2}
\|f_{\beta, \ep}\|_{C^{2,\alpha}_{\delta}} =O(r_\ep^{\delta+2} E),
\end{equation}
where $E$ is given in \eqref{def E 2}. Indeed, \eqref{P gamma} shows that 
\[|P_{\beta, \ep}(0)^\perp| \le |P_{\beta, \ep}(0)|+\frac 1{|\tau_{\beta, \ep}|}\cdot |\langle P_{\beta, \ep}(0), \tau_{\beta, \ep}\rangle| =O(E+|\gamma|).\]
Now one can easily check that $|\gamma| \ll E$ since $\theta<1$. The estimate \eqref{fC2} is then a consequence of Proposition~\ref{resolution mod obstruction} and \eqref{P(0)}. \\

Next, we claim that 
\begin{equation}
\label{asymptotic}
|dd^c f_{\beta, \ep}|_{\omega_{\beta,\ep}} = o(1).
\end{equation}
This is an immediate consequence of \eqref{fC2} on $\rho \ge \frac 12 r_\ep$ given the definition of our norms and the fact that $E=o(1)$. So it remains to check the claim on the Tian-Yau part. There, one can just write $|dd^c f_{\beta, \ep}|_{\omega_{\beta,\ep}}\le \rho^{-\delta-2} \|f_{\beta, \ep}\|_{C^{2,\alpha}_{\delta}} = O( \ep^{-\theta(\frac \delta2 +1)} E)$. If $\nu_0<n$, the latter goes to zero provided that $\theta> \frac{n-\nu_0}{n-1-\frac \delta 2}$ which can always be achieved for some $\delta,\theta \in (0,1)$ since $\nu_0>1$. If $\nu_0\ge n$, it goes to zero provided $\delta <2(n-1)$ which can also be achieved since we have assumed that $n>1$. \\

\noindent
{\bf Step 2. }{\it The term $\langle L_{\beta, \ep}(f_{\beta, \ep}), \tau_{\beta, \ep}\rangle$.}

\medskip
\noindent
By Lemma~\ref{lem tau}, the integral $I:=\langle L_{\beta, \ep}(f_{\beta, \ep}), \tau_{\beta, \ep}\rangle$ satisfies 
\[I=O(r_\ep^{2n-\delta} \|f\|_{C^{0}_{\delta}}) =O(r_\ep^{2n+2}E)\]
where the last identity follows from \eqref{fC2}. We have
\[r_\ep^{2n+2}E=
\begin{cases}
       \ep^{\nu_0+(1-\theta)}   & \text{ if } \nu_0< n, \\
     \ep^{ n+1-\theta} \log \frac 1 \ep & \text{ if } \nu_0= n, \\
      \ep^{ n+1-\theta}   & \text{ if }\nu_0>n.
\end{cases}
\]
and the latter is a $O((\beta-\beta_*)F)$ since $\ep_\beta^{1-\theta}=O(F)$ \--- as soon as $\sigma<1-\theta$ in the case $\nu_0>n$.\\

\noindent
{\bf Step 3. }{\it The term $\langle Q_{\beta, \ep}(f_{\beta, \ep}), \tau_{\beta, \ep}\rangle$.}

\medskip
\noindent
Thanks to \eqref{asymptotic}, the second estimate from Lemma~\ref{lem tau} is valid for $f=f_{\beta, \ep}$, hence $\langle Q_{\beta, \ep}(f_{\beta, \ep}), \tau_{\beta, \ep}\rangle=O(\|f_{\beta,\ep}\|^2_{C^{2,\alpha}_{\delta}})$ if  $n>2$ and $\delta<n-2$ and $\langle Q_{\beta, \ep}(f_{\beta, \ep}), \tau_{\beta, \ep}\rangle=O(\ep^{-\delta}\|f_{\beta,\ep}\|^2_{C^{2,\alpha}_{\delta}})$ if $n=2$. \\

Assume that $n>2$ for now. Given \eqref{fC2}, we have to prove that 
\[r_\ep^{2\delta+2} E^2 = O((\beta-\beta_*)F).\]
We will show actually show that 
\begin{equation}
\label{Q2}
r_\ep^2E^2=\ep^{1-\theta}E^2=O((\beta-\beta_*)F)
\end{equation}
for suitable $\theta$. More precisely, if $\nu_0<n$, we choose $\theta> 1-\frac{\nu_0}{n}$ while if $\nu_0\ge n$ we choose $\theta>\frac {n-1}{2n-1}$. In the first case, one has automatically $(1-\theta)+2\nu_0-2(1-\theta)n>(1-\theta)$. In the second case, we have $1-\theta+2n\theta>n$. Since
 %and pick 
%\[0<\sigma'<\min\{1-\theta, (1-\theta)(n-\nu_0), \nu_0-(2n-1)(1-\theta)\}\] while if $\nu_0\ge n$, we choose $\theta>\frac {n-1}{2n-1}$ and $0<\sigma'<\theta(2n-1)-(n-1)$. A straightforward computation shows that for such a choice of $\theta, \sigma'$, we have
\[E^2=
\begin{cases}
\ep^{2\nu_0-2(1-\theta)n}  & \text{ if } \nu_0< n, \\
\ep^{2n\theta} \log^2 \frac 1 \ep  & \text{ if } \nu_0= n, \\
\ep^{2n\theta} & \text{ if } \nu_0> n, 
\end{cases}
\]
the estimate \eqref{Q2} is satisfied. \\

If $n=2$, then we have to show instead 
\begin{equation}
\label{Q3}
\ep^{-\delta \theta}r_\ep^{2} E^2 = \ep^{1-(1+\delta)\theta}E^2= O((\beta-\beta_*)F)
\end{equation}
and we can apply the previous case to $(1+\delta)\theta$ (in place of $\theta$) for $\delta\ll 1$ and $\theta$ as before so that \eqref{Q3} follows from \eqref{Q2}. 
\end{proof}

\subsection{Proof of the main theorem}

We now get on to proving the main theorem. There is one last technical result needed to reach the conclusion. 

\begin{lem}
\label{continuity gamma}
There exists $\delta_0>0$ such that the functional $\Psi:(\beta,\ep)\mapsto \langle P_{\beta, \ep}(f_{\beta, \ep}), \tau_{\beta, \ep}\rangle$ is continuous on $(\beta_*, \beta_*+\delta_0)\times(0, \delta_0)$. 
\end{lem}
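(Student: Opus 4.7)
The plan is to show sequential continuity: fix $(\beta_0,\ep_0)\in (\beta_*,\beta_*+\delta_0)\times(0,\delta_0)$ and let $(\beta_k,\ep_k)\to (\beta_0,\ep_0)$; we must establish $\Psi(\beta_k,\ep_k)\to \Psi(\beta_0,\ep_0)$.

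\textbf{Step 1 (Continuous dependence of the data).} I would first verify that on any compact subset $K\Subset X\setminus D$ containing the gluing annulus $\{\rho_\ep/2\le \rho\le 2\rho_\ep\}$ for all large $k$, the potential $\varphi_{\beta_k,\ep_k}$, the metric $\omega_{\beta_k,\ep_k}$, the function $\tau_{\beta_k,\ep_k}$, and the error $P_{\beta_k,\ep_k}(0)$ all converge smoothly to their $(\beta_0,\ep_0)$ counterparts. This reduces to three ingredients, all of which are smooth in the parameters: the Calabi ansatz potential $\phi_{\beta,L}$ is the solution to the ODE \eqref{ODE}, and standard ODE theory gives smooth dependence on $\beta$ on any compact $u$-interval (where $\phi_\beta$ stays strictly away from the critical extinction values $\psi_0,\psi_\beta$); the scaling $\lambda_\ep$, the cutoff $\chi_\ep=\chi(\rho/r_\ep)$, and the Tian-Yau formal perturbation $\Phi_{\mathrm{TY},\ep}$ from Proposition~\ref{prop formal expansion} all depend smoothly on $\ep$; near $D$ the conic Calabi model is explicit in the natural $\beta$-family of conic charts.

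\textbf{Step 2 (Compactness of $f_{\beta_k,\ep_k}$).} By Proposition~\ref{resolution mod obstruction} together with the error estimate \eqref{P(0)}, we have a uniform bound $\|f_{\beta_k,\ep_k}\|_{C^{2,\alpha}_\delta}\le C$ (the bound $r_{\ep}^{\delta+2}E$ of \eqref{fC2} is uniform for $(\beta,\ep)$ on a compact subset of the open parameter domain). Arzela-Ascoli on an exhaustion of $X\setminus D$ by compacts produces, after passing to a subsequence, a limit $f_\infty\in C^2_{\mathrm{loc}}(X\setminus D)$ with $\|f_\infty\|_{C^{2,\alpha}_\delta}\le C$ for the $(\beta_0,\ep_0)$-weight (by Fatou). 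Passing to the limit in the equation $P_{\beta_k,\ep_k}(f_{\beta_k,\ep_k})^\perp=0$ using Step~1, and in the orthogonality constraint $\int_X f_{\beta_k,\ep_k}\tau_{\beta_k,\ep_k}\,d\mathrm{vol}_{g_{\beta_k,\ep_k}}=0$ via dominated convergence (the integrand is controlled by $C\rho^{-\delta}\cdot \rho^{0}$, integrable against $\omega_{\beta,\ep}^n$ since $\delta<2n$), shows that $f_\infty$ solves $P_{\beta_0,\ep_0}(f_\infty)^\perp=0$ and lies in $C^{2,\alpha}_\delta(X)^{G,\perp}$ for the limiting structure. The $G$-invariance is preserved under local $C^2$-convergence. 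By the uniqueness part of Proposition~\ref{resolution mod obstruction}, $f_\infty=f_{\beta_0,\ep_0}$; since the limit is the same along every subsequence, the whole sequence converges.

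\textbf{Step 3 (Continuity of $\Psi$).} We pass to the limit in
\[\Psi(\beta_k,\ep_k)=\int_X P_{\beta_k,\ep_k}(f_{\beta_k,\ep_k})\,\tau_{\beta_k,\ep_k}\,d\mathrm{vol}_{g_{\beta_k,\ep_k}}.\]
The integrand converges pointwise on $X\setminus D$ thanks to Steps~1-2 (using $P_{\beta,\ep}(f)=P_{\beta,\ep}(0)+L_{\beta,\ep}f+Q_{\beta,\ep}(f)$). It is dominated by an integrable majorant: on $\{\rho\ge \rho_0\}$ for fixed small $\rho_0$ (independent of $k$), the majorant is a uniform constant; on $\{\rho\le \rho_0\}$ it is bounded by a multiple of $\rho^{-\delta-2}\cdot |\tau|\approx \rho^{-\delta-2}$, which is integrable against $\omega_{\beta,\ep}^n$ uniformly in $\beta$ since $\delta+2<2n$ (using $n\ge 2$ and $\delta\in(0,1)$). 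The measures $d\mathrm{vol}_{g_{\beta_k,\ep_k}}$ also converge weakly to $d\mathrm{vol}_{g_{\beta_0,\ep_0}}$. Dominated convergence then gives $\Psi(\beta_k,\ep_k)\to \Psi(\beta_0,\ep_0)$, completing the proof.

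\textbf{Main obstacle.} The principal subtlety lies in Step~2, where the function space $C^{2,\alpha}_\delta$ itself varies with $(\beta,\ep)$ (the weight $\rho$, the Donaldson Hölder norm near $D$, and the orthogonality condition all depend on the parameters). The way out is that we only need convergence of a single scalar quantity, so local $C^2$-convergence on $X\setminus D$ plus uniform weighted bounds suffice; uniqueness modulo the obstruction then identifies the limit unambiguously.
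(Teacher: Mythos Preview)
Your proof is correct and follows essentially the same route as the paper: extract a subsequential local limit $f_\infty$ of $f_{\beta_k,\ep_k}$ via Arzel\`a--Ascoli and the uniform bound \eqref{fC2}, pass to the limit in both the equation $P^\perp=0$ and the orthogonality constraint, invoke the uniqueness in Proposition~\ref{resolution mod obstruction} to identify $f_\infty=f_{\beta_0,\ep_0}$, and conclude by dominated convergence in the integral defining $\Psi$. Your Step~1 and the explicit majorants in Step~3 spell out details the paper leaves implicit, and your closing remark about the varying function spaces correctly isolates why one must argue via local convergence plus uniqueness rather than try to work directly in a fixed Banach space.
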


\begin{proof}
Let us choose $\sigma:=(\beta, \ep)$ close enough to $(\beta_*,0)$ and let $\sigma_k\to \sigma$ be any sequence. By Arzela-Ascoli theorem and the bound \eqref{fC2} one can extract a subsequence such that the functions $f_{\sigma_k}$ converge locally smoothly on $X\setminus D$ to a function $f_{\infty} \in C^{2, \alpha}_{\delta}(X)^{G}$. In particular, 
\begin{equation}
\label{convergence P}
P_{\sigma_k}(f_{\sigma_k})\to P_{\sigma}(f_{\infty})
\end{equation}
on $X\setminus D$. 
Since $|dd^c f_{\sigma_k}|_{\omega_{\sigma_k}}$ is bounded, Lebesgue domination theorem shows that 
\begin{equation}
\label{conv 2}
\langle f_{\sigma_k}, \tau_{\sigma_k}\rangle \to \langle f_{\infty}, \tau_{\sigma}\rangle, \quad \mbox{and} \quad \Psi(f_{\sigma_k})\to \langle P_\sigma(f_\infty),\tau_\sigma \rangle=:a_\infty.
\end{equation} Since $P_{\sigma_k}(f_{\sigma_k})^\perp=0$, it follows from \eqref{convergence P} that
\[P_{\sigma}(f_{\infty})=a_\infty \frac{\tau_{\sigma}}{|\tau_{\sigma}|^2} \quad \mbox{and} \quad \langle f_{\infty}, \tau_{\sigma}\rangle=0.\]
Given the uniqueness property in Proposition~\ref{resolution mod obstruction}, we infer that $f_\infty=f_\sigma$. By the second item in \eqref{conv 2}, $\Psi$ is continuous at $\sigma$. 
\end{proof}

\begin{proof}[Proof of Main Theorem]
We first prove the existence and then the convergence results. 

1. We fix $\beta$ and vary $\ep$ as prescribed by \eqref{ep eta}. Proposition~\ref{obs gam} then shows that $\langle P_{\beta, \ep}(f_{\beta, \ep}), \tau_{\beta, \ep}\rangle$ achieves positive (resp. negative) values whenever $\eta$ satisfies $1\gg \eta \gg F$ (resp. $1\gg -\eta \gg F$). By Lemma~\ref{continuity gamma} above, this quantity is continuous with respect to $\eta$. Therefore, given any $\beta>0$ close enough to $\beta_*$, there exists $\ep=\ep(\beta)$ as in \eqref{ep eta} such that 
\begin{equation}
\label{solution}
P_{\beta, \ep}(f_{\beta, \ep})=0,
\end{equation}
i.e. $\omega_{\beta, \ep}+dd^c f_{\beta, \ep}$ is Kähler-Einstein. Moreover, one has
\begin{equation}
\label{asym 2}
(1-o(1))\cdot \om_{\beta, \ep} \le \omega_{\beta, \ep}+dd^c f_{\beta, \ep} \le (1+o(1))\cdot \om_{\beta, \ep} 
\end{equation}
by \eqref{asymptotic}. \\

2. We need to identify the Gromov-Hausdorff limit of $(X,\omega_{{\rm KE},\beta})$. 
By \eqref{asym 2} above, it is enough to show the result for the model metric $\omega_{\beta, \ep}$. To lighten notation, let us write $\omega_{{\rm KE},\beta}:=\omega_{\beta, \ep}+dd^c f_{\beta, \ep}$. Since the diameter of the both the gluing zone and the Tian-Yau part are a $O(r_\ep)$, and since the diameter of $\big((r\le r_\ep), \omega_{\beta_*, L}\big)$ is also a $O(r_\ep)$,  it is enough to show that 
\[\sup_{r\ge r_\ep} |\omega_{\beta, L}-\omega_{\beta_*,L}|_{\omega_{\beta_*, L}} \longrightarrow 0\]
when $\ep \to 0$ or, equivalently, when $\beta\to \beta_*$. Now the latter follows from \eqref{expansion4} and the choice of $r_\ep$ which satisfies \eqref{zone beta}. \\

3. Finally, the locally smooth convergence $\ep_\beta^{-1}\omega_{{\rm KE},\beta}$ to the Tian-Yau metric on $X\setminus D$ is an immediate consequence of \eqref{asym 2}. Indeed, the latter gives $C^0$ convergence of the tensor on each compact subset of $X\setminus D$, and the usual bootstrapping arguments allow to improve it to smooth convergence thanks to the Monge-Ampère equation \eqref{eq:P}-\eqref{solution} solved by $\ep_\beta^{-1}\omega_{{\rm KE},\beta}$. 
\end{proof}

\section{Examples}
\label{sec examples}
\subsection{Projective hypersurfaces} 

\subsubsection{$\mathbb P^n$ and the quadric} Here, we consider $X=\mathbb P^n$ and $D=Q_{n-1}$ for $n\ge 2$. We have $\alpha=\frac{n+1}{2}$, $D$ is homogeneous hence Kähler-Einstein and the vector fields on $D$ clearly lift to $X$. Moreover, we have $j_0=1$ by Remark~\ref{rem j0} and Example~\ref{split Pn}. In particular, we get
\[\ep_\beta= 
\begin{cases}
(\beta-\beta_*)^{\frac 12} & \mbox{if } \, n=2 \\
\left(\frac{\beta-\beta_*}{-\log(\beta-\beta_*)}\right)^{\frac 13} & \mbox{if } \, n=3 \\
(\beta-\beta_*)^{\frac {n-1}{2n}} & \mbox{if } \, n\ge 4. \\
\end{cases}
\]

\subsubsection{$\mathbb P^n$ and a large degree hypersurface}
We consider now the case where $X=\mathbb P^n$ and $D$ is a general hypersurface of degree $n$ (resp. $n-1$). We may assume that $n\ge 3$ (resp. $n\ge 4$) so that we are not back to the case of the quadric above. We have $\alpha=\frac{n+1}{n}$ (resp. $\frac{n+1}{n-1}$), $D$ is Kähler-Einstein by \cite[Theorem~4.3]{Tian87} and the condition on the automorphisms is fulfilled thanks to Lemma~\ref{toric} $(ii)$. Actually, when $n\ge  3$ one has $\mathrm{Aut}^\circ(D)=\{1\}$ by \cite{MM63}. Moreover, we have $j_0=1$ by Remark~\ref{rem j0} and Example~\ref{split Pn} and $\alpha <2$ hence
 \[\ep_\beta=(\beta-\beta_*)^{\frac 1n}.\]
 
\subsection{Homogeneous varieties}
\subsubsection{The quadric and a hyperplane section}
Let $X=Q_n\subset \mathbb P^{n+1}$ be the smooth quadric of dimension $n$ for $n\ge 2$  and let $D\simeq Q_{n-1}$ be a general hyperplane section. We have $\alpha=n$ and all the assumptions of the main theorem are clearly met. The normal exact sequence is split by Example~\ref{split quadric}, hence $j_0\ge 2$. It would be interesting to determine the precise value of $j_0$, but we will not pursue this here. 

\subsubsection{Some grassmannians and a hyperplane section} Here, we let $X=\mathrm{Gr}(2,2n)$ for some $n\ge 2$ and $D$ be a general hyperplane section under the Plücker embedding. Note that $X$ has Picard number one. Since $D$ parametrizes $2$-planes in the kernel of a symplectic $2$-form, $D$ is homogeneous under $\mathrm{Sp}(2n)$ and therefore the assumptions of the theorem are met.   	

\bibliographystyle{alpha}
\bibliography{biblio}

\end{document}